\newtheorem{theorem}{Theorem}[section]
\newtheorem{lemma}[theorem]{Lemma}
\newtheorem{proposition}[theorem]{Proposition}
\theoremstyle{definition}
\newtheorem{definition}[theorem]{Definition}
\theoremstyle{remark}
\numberwithin{equation}{section}
\def\Om{\Omega}
\def\C{\mathbb C^n}
\def\al{\alpha}
\def\Om{\Omega}
\def\la{\lambda}
\def\aij{a^{i\overline{j}}}
\newcommand{\rr}{\mathbb{R}}
\newcommand{\cc}{\mathbb{C}}
\newcommand{\ii}{\mathfrak{i}}
\newcommand{\de}{\partial}
\newcommand{\xn}{||(z',y_n)||^2}
\def\la{\lambda}
\def\Om{\Omega}
\title[Krylov approach for complex PDEs]{A priori estimates for the complex Monge-Amp\`ere equation after Krylov}
\begin{document}

\newcounter{remark}
\newcounter{theor}
\setcounter{remark}{0}
\setcounter{theor}{1}
\newtheorem{claim}{Claim}
\newtheorem{corollary}[theorem]{Corollary}
\newtheorem{question}{Question}[section]
\numberwithin{equation}{section}

	\author{S\l awomir Dinew}
	\address{Faculty of Mathematics and Computer Science,  Jagiellonian University 30-348 Krakow, Lojasiewicza 6, Poland}
	\email{slawomir.dinew@im.uj.edu.pl}
	\author{Marcin Sroka}
	\address{Faculty of Mathematics and Computer Science,  Jagiellonian University 30-348 Krakow, Lojasiewicza 6, Poland}
	\email{marcin.sroka@im.uj.edu.pl}

	\subjclass[2020]{}	
	\date{}
	\keywords{complex Monge-Amp\`ere operator, a priori estimates}
	
	\begin{abstract}We establish an analytic proof for the Krylov $C^{1,1}$ estimates for solutions of degenerate complex Monge-Amp\`ere equation. We also provide an analytic proof of the Bedford-Taylor interior  $C^{1,1}$ estimate.
	\end{abstract}
	
	\maketitle
	
	\section{Introduction}
	
	In \cite{CNS85} Caffarelli, Nirenberg and Spruck established the classical solvability of the Dirichlet problem for a general {\it real} nonlinear elliptic equation of Hessian type under the natural geometric assumptions {on} the boundary of the domain considered.
	\begin{theorem}[\cite{CNS85}] \label{cns}
	Let $U$ be a smoothly bounded domain in $\mathbb R^n$ and 
	\[ F(D^2u)=f(\lambda(D^2(u)))\] 
	be an elliptic operator of Hessian type, with ellipticity with respect to the cone $\Gamma$. Assume the conditions (\ref{below}) below {hold}.
	Let $g$ be smooth, up to the boundary of $U$, and \[ g\geq c_0>0.\] Assume further the following convexity property of $\partial U$:
there is $R>0$ such that for any $x_0\in\partial U$ if $\kappa_j,\ j=1,\cdots,n-1$ denote the principal curvatures in $x_0$ with respect to the inner normal, then the vector$(\kappa_1,\cdots,\kappa_{n-1},R)$ belongs to $\Gamma$. Then, the {Dirichlet} problem
	\begin{equation} \label{cns}
	\begin{cases} u\in C^{\infty}(\overline{U})\ {\rm and}\ \forall_{\substack{x \in \overline{U}}} \lambda(D^2u)(x)\in\Gamma,\\
	 F(D^2u(x))=g(x)\ {\rm in}\ U,\\
	 u=\varphi\ {\rm on}\ \partial U
	\end{cases}
	\end{equation}
	admits unique solution for any boundary data $\varphi\in C^{\infty}(\partial U)$.
	\end{theorem}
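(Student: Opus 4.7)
The plan is to use the continuity method. I would connect the target equation to a solvable reference problem through a one-parameter family $F(D^2 u_t) = t g + (1-t) F(D^2 \underline{u})$ with $u_t|_{\partial U} = \varphi$, where $\underline{u}$ is a smooth subsolution (e.g. built from $\varphi$ plus a large multiple of a defining function, using the boundary convexity). The set $T \subset [0,1]$ of admissible $t$ for which the problem is solvable in $C^{2,\alpha}(\overline{U})$ with $\lambda(D^2 u_t) \in \Gamma$ is nonempty ($t=0$). Openness of $T$ follows from the implicit function theorem in Banach spaces once ellipticity is ensured by the cone condition plus concavity of $f$ on $\Gamma$. Thus everything reduces to closedness, i.e. to uniform a priori $C^{2,\alpha}$ estimates along the family.

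For the a priori estimates, I would proceed in the now classical order of increasing difficulty. The $C^0$ estimate follows from the comparison principle applied against $\underline{u}$ and an appropriate supersolution built from the boundary data and a multiple of $|x|^2$. The $C^1$ estimate splits into an interior part (a Bernstein-type argument applied to $|\nabla u|^2$, using concavity of $F$ in the Hessian) and a boundary part obtained by constructing local barriers near $\partial U$ from tangential differences of $u - \varphi$, again using the subsolution. Interior $C^2$ estimates are obtained by differentiating the equation twice, applying the maximum principle to a suitable combination of $\Delta u$ (or the largest eigenvalue) and a cutoff, and exploiting concavity of $f$ together with the structural hypotheses that are abbreviated in the excerpt as conditions (\ref{below}).

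The true difficulty, as is usual in this circle of ideas, is the boundary $C^2$ estimate. Here the double tangential derivatives $u_{\tau\tau}$ at $\partial U$ can be computed directly from the boundary condition, so the main work is to control the mixed tangential-normal derivatives $u_{\tau\nu}$ and the pure normal derivative $u_{\nu\nu}$. For the mixed derivatives I would use the standard CNS barrier: a function of the form $A d - B d^2 + |x - x_0|^2 - $ tangential correction, where $d$ is the distance to $\partial U$, and apply the maximum principle to operators of the type $T(\cdot) = (u - \varphi)_\tau - $ correction. The estimate of $u_{\nu\nu}$ at $\partial U$ is the subtle step and requires the geometric hypothesis on $(\kappa_1,\dots,\kappa_{n-1},R) \in \Gamma$: one shows that $u_{\nu\nu}$ cannot blow up because, if it did, the boundary Hessian of $u$ in tangential directions (expressed through the principal curvatures via differentiating the boundary condition) together with a large normal-normal entry would force $\lambda(D^2 u)$ out of $\Gamma$ or violate the prescribed $F$-value; the admissibility cone condition is exactly what makes this argument work.

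Once uniform $C^2$ bounds are in place, the equation becomes uniformly elliptic on the solutions of the continuity family, and the Evans--Krylov theorem furnishes uniform $C^{2,\alpha}(\overline{U})$ estimates (with boundary regularity from Krylov's boundary extension). Schauder bootstrapping then yields $C^{\infty}(\overline{U})$ control depending only on the data, closing the continuity method and producing a solution at $t=1$. Uniqueness follows from the comparison principle applicable to concave elliptic operators on the admissible cone. The main obstacle, as indicated, is the boundary pure-normal second derivative bound; the precise way in which the admissibility cone $\Gamma$ absorbs the geometric curvature condition is the technical heart of \cite{CNS85}.
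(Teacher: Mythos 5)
The paper does not prove this theorem; it is stated as a quoted result from \cite{CNS85} and used as background. Your outline is essentially the argument in \cite{CNS85} itself: continuity method, openness via linearization and ellipticity/concavity, closedness via $C^0$, $C^1$, interior $C^2$, boundary $C^2$ (tangential, mixed, then pure normal via the curvature hypothesis $(\kappa_1,\dots,\kappa_{n-1},R)\in\Gamma$), Evans--Krylov plus Krylov's boundary $C^{2,\alpha}$, Schauder bootstrap, and uniqueness from the comparison principle for concave admissible operators. This is the correct and standard route; the only caveat is that your sketch compresses the boundary normal-normal estimate, which is indeed the technical heart of \cite{CNS85}, into a heuristic, but the mechanism you name (incompatibility with the cone $\Gamma$ under a blow-up of $u_{\nu\nu}$) is precisely what drives that argument.
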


	We wish to remark that similar results can also be extracted from the more general theory developed by Krylov - see \cite{Kr83, Kr84a, Kr84b, Kr90}.

	Typical examples of operators $F$ for which all required conditions are met are given by \[ f(\lambda)=\sigma_k(\lambda)^{1/k},\] 
	where 
	\[ \sigma_k(\lambda):=\sum_{1\leq i_1<\cdots<i_k\leq n}\lambda_{i_1}\cdots\lambda_{i_k}\] is the $k$-th elementary symmetric polynomial and the corresponding cone $\Gamma_k$ is given by
	\[ \Gamma_k:=\lbrace\lambda\in\mathbb R^n|\ \sigma_j(\lambda)>0,\ j=1,\cdots,k\rbrace.\]

	The analogue {of} this result for the complex Monge-Amp\`ere equation was established in \cite{CKNS86}.
	\begin{theorem}[\cite{CKNS86}]
	 Let $\Omega\subset\mathbb C^n$ be a $C^4$ regular bounded strictly pseudoconvex domain. Then the Dirichlet problem
	 \begin{equation}
	  \label{ckns} \begin{cases}
	 u\in PSH(\Omega)\cap C(\overline\Omega),\\
	    det(\frac{\partial^2 u}{\partial z_j\partial\bar{z}_k})=g(z),\\
	    u=\varphi\ {\rm on}\ \partial\Omega
	   \end{cases}\end{equation}
	   admits a unique solution $u\in C^2(\overline\Omega)$ provided $\varphi\in C^4(\partial\Omega)$ and $g\geq c_0>0$ is $C^2$ smooth on $\overline\Omega$.
	\end{theorem}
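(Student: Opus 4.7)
The plan is to use the method of continuity, reducing everything to a closed priori estimate in $C^{2,\alpha}(\overline\Omega)$. Consider the family
\[ \det\!\bigl(\partial_j\bar\partial_k u_t\bigr)=g_t,\qquad u_t|_{\partial\Omega}=\varphi,\qquad t\in[0,1],\]
where $g_0$ is chosen so that $u_0$ is an explicit smooth strictly psh solution (for instance one produced from a defining function of $\Omega$ plus a harmonic extension of $\varphi$), and $g_1=g$, with $g_t\geq c_0/2$ throughout. Openness of the solution set in $t$ follows from the implicit function theorem in $C^{2,\alpha}(\overline\Omega)$ once we note that the linearisation is a strictly elliptic linear operator with zero Dirichlet data, invertible by the classical Schauder theory. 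Closedness is equivalent to uniform $C^{2,\alpha}(\overline\Omega)$ bounds along the family. Uniqueness comes from the comparison principle for $\MA$.

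To obtain the a priori estimates I would proceed in the usual cascade. The $C^0$ bound is immediate from $\varphi$ and a global subsolution built from a strictly plurisubharmonic defining function of $\Omega$ (strict pseudoconvexity is used here). For the $C^1$ bound I would combine local barriers at each boundary point, again exploiting strict pseudoconvexity, with a global gradient estimate obtained by applying the maximum principle to an auxiliary function such as $|\nabla u|^2 e^{-Cu}$. The heart of the matter is the $C^2$ estimate. Interior $C^2$ bounds follow by differentiating the equation twice and applying the maximum principle to $\log\lambda_{\max}(u_{j\bar k})+\phi(|\nabla u|^2)+\psi(u)$, using the concavity of $\log\det$ on Hermitian positive matrices and the assumption $g\geq c_0>0$, $g\in C^2$.

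The real obstacle is the boundary $C^2$ estimate, and this is the step that genuinely requires the $C^4$ regularity and strict pseudoconvexity of $\partial\Omega$. Choosing at a point $p\in\partial\Omega$ a local frame in which $e_n$ is the complex normal, the pure tangential second derivatives of $u$ are controlled by differentiating the boundary condition $u=\varphi$ twice along $\partial\Omega$. The mixed tangential--normal derivatives $u_{\alpha\bar n}$, $\alpha<n$, are handled by applying the maximum principle to a carefully built barrier of the form
\[ v=\pm T u+A d-B d^2+\sum_{\alpha<n}|Tz_\alpha|^2,\]
where $T$ is a tangential complex vector field and $d$ a defining function; strict pseudoconvexity gives the positivity needed for the linearised operator to be strictly negative on $v$. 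Finally, the double normal derivative $u_{n\bar n}$ at $p$ is recovered algebraically from the equation once all other entries of $(u_{j\bar k})$ are controlled, because the tangential Hermitian block is uniformly positive by the strict pseudoconvexity, which bounds the determinant of the tangential minor from below.

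With $\|u\|_{C^2(\overline\Omega)}$ under control, the equation becomes uniformly elliptic and concave, so the Evans--Krylov theorem upgrades the estimate to $C^{2,\alpha}(\overline\Omega)$, closing the continuity argument; standard Schauder bootstrapping then gives $C^\infty$ regularity when the data are smooth, and in the stated $C^2,C^4$ setting we obtain $u\in C^2(\overline\Omega)$. I expect the double-normal derivative estimate, and more precisely the construction of the tangential--normal barrier $v$ above, to be the most delicate step, since it is where the geometry of $\partial\Omega$ enters in an essential and non-perturbative way.
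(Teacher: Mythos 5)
The paper cites this theorem from \cite{CKNS86} without proof, so there is no in-paper argument to compare against; your sketch correctly reproduces the standard continuity-method scheme from that reference (openness via Schauder, closedness via a priori $C^{2,\alpha}$ bounds, reduction of the global Hessian estimate to the boundary, tangential--tangential from the boundary data, tangential--normal via barriers, Evans--Krylov to pass from $C^2$ to $C^{2,\alpha}$).

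One step in your sketch is stated too quickly and in fact hides the most delicate lemma of \cite{CKNS86}. You write that the double normal derivative $u_{n\bar n}$ is ``recovered algebraically from the equation \dots\ because the tangential Hermitian block is uniformly positive by the strict pseudoconvexity.'' But strict pseudoconvexity of $\partial\Omega$ by itself does not give a lower bound on the tangential $(n-1)\times(n-1)$ block of the complex Hessian of the \emph{solution} $u$; a priori only subharmonicity and the tangential--tangential $C^2$ bounds (from the boundary data) are available, which give upper, not lower, bounds. The actual argument in \cite{CKNS86} is circular in a controlled way: one considers the infimum over boundary points and unit tangential $(1,0)$-directions of $u_{\xi\bar\xi}$; if it is not bounded below by a positive constant, a barrier construction first shows that $u_{n\bar n}$ is nevertheless bounded at a near-minimizing point, and then one expands the determinant to contradict $g\ge c_0>0$. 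It is exactly here that the non-degeneracy enters irrevocably, and it is this step (rather than the tangential--normal barrier, which is comparatively soft) that fails when $c_0\to 0$ --- this is precisely what forces the very different machinery developed in the paper for the degenerate case.
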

As both theorems play a fundamental role in geometric analysis, they were subject to generalizations in terms of the {\it optimality} of the assumptions. Of particular interest is the constant $c_0$, which, roughly speaking, controls (in a very weak way) the strict ellipticity of the equation. It is in particular well known (cf. \cite{CNS86} and references therein), that once the non-degeneracy assumption $g \geq c_0>0$ is dropped the {\it optimal} regularity one can hope for is $u \in C^{1,1}(\overline{U})$.

In \cite{Kr90} N. V. Krylov has shown this optimal $C^{1,1}$ regularity using probabilistic methods. Although his results hold for much more general equations, we state it only for the complex Monge-Amp\`ere operator as this is the subject of present work.
\begin{theorem}[Krylov]\label{krylovest}
Assume that $\Omega\subset\mathbb C^n$ is a $C^{3,1}$ regular bounded strictly pseudoconvex domain. Assume that $\varphi\in C^{3,1}(\partial\Omega)$ and that $g^{1/n}\in C^{1,1}(\overline{\Omega})$ is non-negative. Then the Dirichlet problem (\ref{ckns}) admits a unique solution that is $C^{1,1}$ smooth up to the boundary.
\end{theorem}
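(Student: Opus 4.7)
The plan is to combine the method of continuity with a careful uniform a priori $C^{1,1}$ estimate for a non-degenerate approximation. First, replace $g$ by $g_\eps := (g^{1/n}+\eps)^n$, so that $g_\eps^{1/n} \in C^{1,1}(\bOmega)$ with bound uniform in $\eps$ and $g_\eps \geq \eps^n > 0$. Theorem~1.2 yields smooth solutions $u_\eps \in \PSH(\Om)\cap C^2(\bOmega)$ of $\det(u_{\eps,j\bar k}) = g_\eps$ with $u_\eps = \vp$ on $\pa \Om$. The entire task is then to produce $C^{1,1}$ bounds on $u_\eps$ that are independent of $\eps$ and to pass to the limit $\eps \to 0^+$; weak-$*$ compactness and stability of the complex Monge--Amp\`ere operator along $C^{1,1}$ families will take care of the last step.

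The $C^0$ and $C^1$ bounds are classical. The upper $C^0$ bound comes from the harmonic extension of $\vp$, while the lower bound is the plurisubharmonic barrier $\vp + A(|z|^2 - C)$ constructed from strict pseudoconvexity. The boundary gradient is then obtained from these sub- and super-solutions, and the interior gradient from the boundary one by applying the maximum principle to $|\nabla u_\eps|^2 + \Phi(u_\eps)$ tested against the linearised operator $L_\eps := \sum \aij_\eps \partial_j \partial_{\bar k}$, where $\aij_\eps$ is the cofactor matrix.

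For the $C^{1,1}$ bound the crucial object is the concave form of the equation, $F(u_{\eps,j\bar k}) := (\det(u_{\eps,j\bar k}))^{1/n} = g^{1/n} + \eps$. Interior estimate: differentiate twice in a direction $e$, use concavity of $F$ on positive Hermitian matrices to obtain
\[ L_\eps(u_{\eps,ee}) \geq -C\,\|g^{1/n}\|_{C^{1,1}(\bOmega)}, \]
and apply the maximum principle to an auxiliary quantity of Bedford--Taylor type such as $\log \lambda_{\max}(u_{\eps,j\bar k}) + \Psi(|\nabla u_\eps|^2) + \Phi(u_\eps)$, with $\Psi,\Phi$ chosen so that the bad terms are absorbed by the good ones coming from the strict plurisubharmonicity of $u_\eps$. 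For the boundary $C^{1,1}$, split the second derivatives at $x_0 \in \pa\Om$ into tangential-tangential (controlled by $\vp \in C^{3,1}$ and the boundary defining function), tangential-normal (controlled by a Guan--Spruck style barrier $\eta \rho - \tau |z-x_0|^2$), and double-normal. The normal-normal direction, which in the CKNS proof is recovered via $u_{\nu\bar\nu} \lesssim 1/g$, is instead extracted from the identity $F(u_{\eps,j\bar k}) = g^{1/n}+\eps$: the right-hand side lies in $C^{1,1}$ uniformly in $\eps$, and concavity of $F$ converts this into a two-sided normal-normal bound that does not deteriorate as $\eps \to 0$.

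The main obstacle, as in Krylov's original probabilistic treatment, is precisely the uniform boundary $C^{1,1}$ estimate in the degenerate regime: in the classical CKNS approach the control of $u_{\nu\bar\nu}$ uses $1/\inf g$, which now blows up. The whole point of the assumption $g^{1/n} \in C^{1,1}(\bOmega)$ is to make the linearisation at $F = \det^{1/n}$ the right object; the concavity of $F$ provides a one-sided Hessian bound that persists through the degeneration, while the $C^{1,1}$ norm of $g^{1/n}$ provides the complementary one-sided bound. Once both are in place, the Arzel\`a--Ascoli theorem applied to $(u_\eps)$, $(\nabla u_\eps)$ and the weak-$*$ compactness of $(D^2 u_\eps)$ in $L^\infty$ produce a limit $u \in C^{1,1}(\bOmega) \cap \PSH(\Om)$ satisfying the required Dirichlet problem, with uniqueness coming from the comparison principle for bounded plurisubharmonic functions.
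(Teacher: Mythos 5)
Your regularization $g_\eps=(g^{1/n}+\eps)^n$, the $C^0$/$C^1$ bounds, the linearization at $F=\det^{1/n}$, and the reduction to the boundary normal-normal derivative all agree with the paper's setup. The interior-to-boundary reduction, however, does not require a Pogorelov/Bedford--Taylor quantity like $\log\lambda_{\max}+\Psi(|\nabla u|^2)+\Phi(u)$ (which in fact degenerates with $g$); concavity of $F$ applied to $u_{[T][T]}+a\|z\|^2$ with a constant vector field $T$ already gives $\max_{\overline\Omega}|u_{[T][T]}|\le C+\max_{\partial\Omega}|u_{[T][T]}|$, reducing everything to the boundary (Proposition~\ref{toboun}).

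The genuine gap is at the double-normal boundary estimate. You assert that ``concavity of $F$ converts this into a two-sided normal-normal bound that does not deteriorate as $\eps\to 0$,'' with the $C^{1,1}$ bound on $g^{1/n}$ providing ``the complementary one-sided bound.'' This is exactly the crux of Krylov's theorem and is not an argument: concavity yields an inequality involving $F^{i\bar j}(A)u_{i\bar j}$, and nothing forces the coefficient $F^{n\bar n}$ to stay bounded away from zero once $g$ degenerates. In the CKNS proof the eigenvalue $u_{\nu\bar\nu}$ is pried loose precisely because the other $n-1$ eigenvalues are bounded below, which is what uses $1/\inf g$; dropping $\inf g>0$ destroys that mechanism and concavity alone cannot replace it. The paper closes this gap with a three-part ITW-style argument: (i) a \emph{weakly interior} second-order bound $|u_{(\zeta)(\zeta)}(z)|\le\eps\sup_{\partial\Omega}|u_{\nu\nu}|+C_{\eps,\delta}$ on $\Omega_\delta$, proved via a Krylov maximum principle with an explicit barrier (Theorems~\ref{phomog} and \ref{weakinteriorbound}); (ii) the existence, in \emph{adapted} (Bedford--Taylor) coordinates obtained by a nonlinear holomorphic change of variables, of affine skew-hermitian vector fields $\xi$ that are approximate tangential to $\partial\Omega$ and for which the commutation identity $\mathcal F^{i\bar j}u_{(\xi)(\xi)i\bar j}\ge\big(\mathcal F(u_{p\bar q})\big)_{(\xi)(\xi)}$ holds (Propositions~\ref{fquadratic} and \ref{approx}); and (iii) a bootstrapping barrier argument establishing $(u_{(\xi)(\xi)})_{x_n}(0)\le rM+C_r$ with $M=\sup_{\partial\Omega}u_{\gamma\gamma}$, from which the Taylor expansion of $u_{x_n}$ along $\partial\Omega$ and one more maximum-principle barrier give $M\le C(rM+C_r)$ and hence a bound on $M$. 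Step (ii) is where the complex Monge--Amp\`ere operator is essential: the required holomorphic change of coordinates is not available for general complex Hessian operators, and the paper's final section shows explicitly that the needed skew-hermitian tangential vector field in the $\partial_{y_n}$ direction is obstructed in general. Your proposal contains none of this machinery, and without it there is no route from $g^{1/n}\in C^{1,1}$ to a uniform bound on $u_{\nu\nu}$.
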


In particular it follows from Krylov's theorem that the solution $u$ is a {\it strong} one: off a set of measure zero it admits second-order Taylor expansion, and plugging the corresponding second order partial derivatives into the equation one ends up with the prescribed right hand side. For a thorough explanation of Krylov's probabilistic approach, we refer to \cite{Del}.

Since then, it has been an open question how to prove this regularity with analytic tools for complex equations, cf. \cite{DGZ, CMA}. In the real setting, this was achieved by Krylov himself in a series of papers \cite{Kr94a, Kr94b, Kr95a, Kr95b}. We also refer to the book \cite{Kr87} for many geometrical and probabilistic motivations behind this analytical approach.

The Krylov approach in the real case has been somewhat simplified in \cite{ITW} where the differentiation technique with respect to vector fields with skew-symmetric Jacobians was introduced.

Our project was initiated with the goal of extracting these real arguments from \cite{ITW} that do not have a complex analogue and to understand in detail the difficulties that arise in the analytic approach to the complex Krylov theorem. To our surprise, we found that modulo significant technical difficulties all the reasoning in \cite{ITW} can be carried for the Monge-Amp\`ere operator in the complex setting. Hence, we obtained the following theorem that resolves the aforementioned open problem.

\begin{theorem}\label{complexKrylov}
 There is an entirely analytic proof of Krylov's $C^{1,1}$ estimate for the complex Monge-Amp\`ere equation, i.e. of Theorem \ref{krylovest}.
\end{theorem}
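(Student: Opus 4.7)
The plan is to adapt the analytic strategy of Ivochkina–Trudinger–Wang \cite{ITW} to the complex setting. The starting point is a regularization: replace $g^{1/n}$ by $g^{1/n}+\varepsilon$ so that the CKNS theorem yields smooth admissible solutions $u_\varepsilon$. The entire task then reduces to deriving an $\varepsilon$-independent $C^{1,1}(\overline{\Omega})$ bound for $u_\varepsilon$; a standard weak compactness argument passes to the limit and identifies the limit as a strong solution of the degenerate problem, using the concavity of $\det^{1/n}$ on positive Hermitian matrices to read off the equation almost everywhere.

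Next I would treat the boundary part of the estimate, $\sup_{\partial\Omega}|D^2 u_\varepsilon|$, via the CKNS barrier constructions based on strict pseudoconvexity of $\Omega$. Tangential and mixed tangential–normal second derivatives on $\partial\Omega$ are handled by the standard techniques, so matters reduce to the double-normal component and then, by ellipticity, to an interior estimate of $\sup_\Omega u_{\xi\bar\xi}$ for unit directions $\xi\in\mathbb{C}^n$. For this I would apply the maximum principle to a test function of the form
\[
W(z,\xi)=\log\bigl(u_{\xi\bar\xi}(z)\bigr)+\phi(|\nabla u|^2)+\psi(u),
\]
and differentiate along vector fields $V(z,\xi)$ whose $\xi$-Jacobian is skew-Hermitian---the natural complex analog of the skew-symmetric vector fields of \cite{ITW}. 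Such $V$ preserve the Hermitian inner product on $\mathbb{C}^n$, and their commutators with $\partial_j$ produce only first-order error terms.

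Writing $L=\sum a^{j\bar k}\partial_j\partial_{\bar k}$ for the linearized operator, the concavity of $F(H)=\log\det H$ on the cone of positive Hermitian matrices together with the regularity $g^{1/n}\in C^{1,1}$ yields, after differentiating the equation twice along $V$, an inequality of the form $L(W)\ge -C(1+|\nabla^2 u|)$ at an interior maximum of $W$. Choosing $\phi$ and $\psi$ with sufficiently large convex/concave growth, following the scheme of \cite{ITW}, forces the maximum of $W$ to be controlled a priori and gives the desired bound on $u_{\xi\bar\xi}$.

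The main obstacle will be the complex adaptation of the skew-Hermitian vector field machinery. In the real case, the one-parameter group $e^{tA}$ generated by a skew-symmetric matrix $A$ acts by rotations and makes all directions $\xi$ accessible; in the complex case the flow must respect the unitary structure, which severely constrains the admissible $V$ and causes the commutators $[V,\partial_j]$, $[V,\partial_{\bar k}]$ to nontrivially mix holomorphic and antiholomorphic differentiation. Ensuring that these commutator terms can still be absorbed into $L(W)$ against the favorable negative curvature produced by double differentiation of the concave operator is where I expect the principal technical difficulty to concentrate; the remaining steps---boundary barriers, passage to the limit, and verification of the strong Monge-Amp\`ere identity---are comparatively routine.
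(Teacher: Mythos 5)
Your outline correctly identifies the regularization step, the reduction to the boundary double-normal derivative, and the general spirit of using skew-Hermitian vector fields as the complex analogue of the skew-symmetric fields in \cite{ITW}. However, there are two genuine gaps that make the proposed strategy fail.

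First, the test function $W(z,\xi)=\log(u_{\xi\bar\xi})+\phi(|\nabla u|^2)+\psi(u)$ is aimed at a full interior bound $\sup_\Omega u_{\xi\bar\xi}\leq C$, but such a bound is \emph{not available} in the degenerate setting. Because $g$ may vanish, one cannot expect a $C^2$ interior estimate independent of $\inf g$ (nor is $\log(u_{\xi\bar\xi})$ even well-defined where the Hessian degenerates, and $F=\log\det$ does not extend to the boundary of the cone). What one actually needs, and what the paper proves in Section \ref{hessint}, is the much weaker statement
\[
|D^2u(z)|\leq\varepsilon\,\sup_{\partial\Omega}|D^2u|+C_{\varepsilon,\delta}\qquad\text{for }z\in\Omega_\delta,
\]
obtained not from a Pogorelov-type test function but from the Krylov maximum principle applied to a ratio $w/v$ in an extended variable space $\Omega\times\mathbb{C}^{n+1}$ (Theorems \ref{phomog} and \ref{weakinteriorbound}). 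This weak interior bound is then fed into the boundary barrier argument; it never upgrades to a genuine interior estimate, and a $\log$-based argument cannot replace it.

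Second, you correctly flag that constructing suitable skew-Hermitian almost-tangential vector fields is the crux of the complex case and is ``severely constrained'', but you stop there. In fact, in generic complex coordinates the required fields do \emph{not} exist: as Section 6 of the paper shows, there is an obstruction ($\rho_{y_nz_l}(0)\neq0$) to finding a skew-Hermitian almost-tangential field extending $\partial_{y_n}$. The resolution is the key new idea: pass to \emph{adapted coordinates} (Proposition \ref{bedtay}) by a nonlinear biholomorphic change of variables after Bedford--Taylor, in which the obstruction vanishes and the explicit fields (\ref{xk})--(\ref{yn}) work. Crucially, this change of variables is compatible only with the Monge--Amp\`ere operator, via Proposition \ref{holo}; a general complex Hessian operator does not survive it. Without this step your argument cannot close, and without recognizing that the biholomorphic invariance is specific to $\det^{1/n}$, the special role of Monge--Amp\`ere in the theorem would remain unexplained.
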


Exactly as in \cite{ITW}, the proof consists of three parts.

The first is to obtain a gradient bound and reduce the Hessian bound to the boundary normal-normal derivative estimate. All this is already well-known to experts in the field.

The second one is a {\it weakly interior second order estimate} stating that if we denote 
\[ \Omega_{\delta}:=\lbrace z\in \Omega\ |\ dist(z,\Omega)>\delta\rbrace \] 
then, for $\delta>0$ sufficiently small, $\Omega_{\delta}$ is also $C^{3,1}$ smooth, and one has the bound:
\begin{equation}\label{weaklyint}
 \forall_{\varepsilon\in (0,1)} \exists_{C_{\varepsilon,\delta}>0} \forall_{z\in\Omega_{\delta}}\ |D^2u(z)|\leq\varepsilon sup_{w\in\partial\Omega}|D^2u(w)|+C_{\varepsilon,\delta}
\end{equation}
for a constant $C_{\varepsilon,\delta}$ depending on many additional quantities but not on $||D^2u||$. The proof of this fact, modeled on the argument of Ivochkina \cite{Iv}, is presented in Section \ref{wint}. We recall that despite its complexity, this part of the argument does not require substantially new ideas in the complex setting. The only interesting technicality is that, compared to \cite{Iv}, there is a novel term to handle\footnote{It is the third part of the $A_1$ term in Step 1 of the computation.}, but as it turns out it is harmless. In the special case of the complex Monge-Amp\`ere operator such weak interior bounds were established in \cite{Zh} using probabilistic methods (the gradient bound was established earlier in \cite{Kr93}). The analytic proof is nevertheless of its own interest as it in particular generalizes the Bedford - Taylor bound from \cite{BT76} and, in particular, answers Question 25 from \cite{DGZ}.

The third part is to exploit the weakly interior estimate and other a priori bounds to establish control {on} the boundary normal-normal derivatives in terms of already contained quantities. To this end Ivochkina, Trudinger and Wang exploit a specially constructed almost tangential vector fields with many useful properties for further calculations, cf. Section 2 in \cite{ITW}. The reason is as follows: whenever the equation is differentiated twice with respect to the vector field $\tau$, numerous uncontrollable error terms occur unless $\tau$ has a special structure, as Lemma 2.1 in \cite{ITW} shows. The crucial part of the argument is that such almost tangential skew-symmetric vector fields can always be found, at least in a neighborhood of any boundary point, cf. equation $(2.9)$ in \cite{ITW}.

Although it is rather obvious what the complex analogue of such $\tau$'s should be, and also their good properties when it comes to differentiating complex Hessian operators are also not unexpected - see Proposition \ref{fquadratic}, their very existence near a boundary point is in general {\bf obstructed} for complex admissible domains. However, the crucial observation is that in special {\it adapted coordinates} the existence of the aforementioned vector fields is {\bf unobstructed} and the argument from \cite{ITW} can be completed. The existence of such coordinates, in turn, given by {Proposition \ref{bedtay}}, requires a non-linear holomorphic change of variables, which makes it applicable only for the complex Monge-Amp\`ere operator.

In relation to the last remark, we wish to point out that a vast majority of the arguments below, with the notable exception of the mentioned existence of adapted coordinates, apply verbatim to more general complex Hessian equations. For this reason, we formulate the majority of our technical results in the more general language of complex Hessian equations. Whether the full analytic proof of Krylov's theorem can be established for complex Hessian equations (under the natural assumptions) remains an open question. The above-described obstruction shows that the present argument has to be modified, at best. We hope to address this problem in future work.

The note is organized as follows: in Section 2 we gather the basic notation, definitions and classical results that will be needed later on. Section 3 contains some standard reductions and PDE facts that we shall implicitly use later on. {Section \ref{gradint} is devoted to the proof of the weakly interior gradient estimate - see Theorem \ref{1Zhou}. The mentioned weakly interior second order bound Theorem \ref{weakinteriorbound} is proved in Section \ref{hessint} utilizing in particular many computations from Sections 3 and \ref{gradint}. The main argument is contained in Section 5 where we show the main result Theorem \ref{complexKrylov}.} In the last section we shall discuss to what extent the argument can be applied for a general elliptic complex Hessian equation.

{\bf Acknowledgments}: The first named author was partially supported by Sheng grant no. 2023/48/Q/ST1/00048 of the National Science Center, Poland. The research of the second author was supported in part by National Science Center of Poland grant no. 2021/41/B/ST1/01632.

\section{Preliminaries}

\subsection{Notation} As is customary, we shall denote by $C$ various constants that do not depend on the pertinent quantities in a given estimation. In particular, these $C$'s may vary from line to line. In the main argument, we furnish these constants with additional labels so that the dependencies may be more easily tracked.

We shall freely use Einstein's notation of summation over repeated indices.

For any complex vector fields $X$, $Y$, $(1,0)$ vector fields $\xi$, $\eta$ and smooth function $v$ in $\cc^n$ we use the notation
\begin{equation} \begin{gathered}
v_X=dv(X), \\ v_{(\xi)}=dv\left( \xi_l\frac{\de}{\de z_l} +\overline{\xi}_l\frac{\de}{\de \overline{z}_l} \right)=\xi_lv_{z_l} +\overline{\xi}_lv_{\overline{z}_l}.
\end{gathered} \end{equation}
We remark that for the real vector field $\Re(\xi):=\frac12(\xi_l\frac{\de}{\de z_l} +\overline{\xi}_l\frac{\de}{\de \overline{z}_l})$ one has
\begin{equation}\label{realtocomplex}
 2v_{\Re(\xi)}=v_{(\xi)}.
\end{equation}

For the second order derivatives we use:
\begin{equation}\begin{gathered}
v_{XY}=X^iY^ju_{ij}+X^iY^{\bar{j}}u_{i\bar{j}}+X^{\bar{i}}Y^ju_{\bar{i}j}+X^{\bar{i}}Y^{\bar{j}}u_{\bar{i}\bar{j}},\\
v_{[\xi][\eta]}=v_{\xi \eta}+v_{\xi \overline{\eta}}+v_{ \eta \overline{\xi}}+v_{\overline{\xi} \overline{\eta}},\\
v_{(\xi) (\eta)}=\left(v_{(\xi)}\right)_{(\eta)}.
\end{gathered}\end{equation}

Again, we note that
\[v_{(\xi) (\eta)}=4\big(u_{\Re (\xi)}\big)_{\Re (\eta)}.\]

We remark that our bracket notation follows \cite{ITW} and is very different from Krylov's or the one from \cite{Zh}. We also note that we follow the standard notation for tensor components, with respect to the basis $\partial_{z_i}$, $\partial_{\overline{z_i}}$ in the complex case and $\partial_{x_i}$, $\partial_{y_i}$ in the real one. The exception is that for the $(1,0)$ vector field we use lower indices.

By $\Omega$ we shall denote a bounded domain in $\mathbb C^n$. Unless otherwise stated we shall assume that $\partial\Omega$ is $C^{3,1}$ regular\footnote{Starting from Subsection 3.2 $C^4$ regularity will be the standing assumption.} and strictly pseudoconvex. We will crucially exploit the notion of a defining function for $\Omega$ in the following sense.
\begin{definition} \label{pseudoconv} The function $\psi$ is defining function off class $C^k$ for $\Omega$ provided it is defined in a neighborhood of $\Omega$, $\psi\in C^{k}$, $\Omega=\lbrace \psi>0\rbrace$, $\partial\Omega=\lbrace \psi=0\rbrace$ and $||\nabla\psi_{|\partial\Omega}||\geq 1$.
\end{definition}
Modifying $\psi$, if necessary, we can and assume that $-\psi$ is strictly plurisubharmonic up to the boundary of $\Omega$, that is, the complex Hessian of $-\psi$ has its eigenvalues uniformly positive on $\overline{\Omega}$. For the more general situation of strictly $\Gamma$- admissible domains for pairs $(F,\Gamma)$ satisfying (\ref{below}) one can find $\psi$ such that $-\psi$ is strictly $\Gamma$ - admissible.

\subsection{Complex Hessian operators} 
We {collect} here the basics of the theory of Hessian operators as presented, for example, in \cite{CNS85}. Consider an open convex cone $\Gamma\subset\mathbb R^n$ with a vertex at the origin. We shall assume that
\begin{equation}\begin{gathered}\label{cone}
 \Gamma\ {\rm is\ symmetric\ w.r.t.\ permutation\ of\ coordinates},\\
 \Gamma_n\subset\Gamma\subset\Gamma_1,
\end{gathered}\end{equation}
where $\Gamma_j:=\lbrace\la\in\mathbb R^n\ |\ \sigma_k(\la)>0,\ k=1,\cdots,j \rbrace$. In particular $\Gamma_n$ is the positive orthant.

\begin{definition}\label{admis}
A $C^2$ function $u:\Omega\longmapsto\mathbb R$ is said to be $\Gamma$ admissible, for a fixed cone $\Gamma$ satisfying (\ref{cone}),
if at each $z\in\Omega$ the vector $\lambda(D^2_{\mathbb C}u)(z)$ of eigenvalues of complex Hessian
\[D^2_{\mathbb C}u=\left( \frac{\partial^2u}{\partial z_j\partial\bar{z}_k}(z)\right)\] of $u$ belongs to $\overline{\Gamma}$. Moreover, $u$ is said to be strictly admissible if for any $\varphi\in C^2_{0}(\Omega)$ there is an $\epsilon>0$ such that $u+\epsilon\varphi$ is $\Gamma$-admissible.
\end{definition}
We note that due to the symmetry of $\Gamma$, the order of the eigenvalues is irrelevant. Another basic observation is that, by (\ref{cone}) any $C^2$ $\Gamma$-admissible function is necessarily subharmonic.

\begin{definition}\label{tilgamma}
 Let $\Gamma$ be a cone as in (\ref{cone}). By $\tilde{\Gamma}$ we shall denote the convex cone of the Hermitian matrices $A$ such that the corresponding vector of eigenvalues $\lambda(A)$ belongs to $\Gamma$.
\end{definition}

We observe that a function $u$ is admissible if and only if $D^2_{\mathbb C}u$ belongs to the Euclidean closure of $\tilde{\Gamma}$.
To any such $\tilde{\Gamma}$ one can associate a class of operators which are called complex Hessian operators.
\begin{definition}\label{hes}
 A complex Hessian operator is a function \[ F:\tilde{\Gamma}\longmapsto\mathbb R,\] that depends only on the eigenvalues of the matrices from $\tilde{\Gamma}$ i.e. there is a function $f:\Gamma\longmapsto\mathbb R$, symmetric with respect to permutations of coordinates such that
 $$F(A)=f(\lambda(A)).$$
\end{definition}

The {\it ellipticity} of such an operator can be defined in a weak sens as follows:
\begin{definition}\label{ellip}
 A complex Hessian operator $F$ defined on a cone $\tilde{\Gamma}$ is elliptic if for any $A\in \Gamma$ and any $P\in\Gamma_n$ one has
 $$F(A+P)\geq F(A).$$
\end{definition}
Next, we impose natural conditions on $F$:
\begin{equation}\label{below}
\end{equation}
	\begin{enumerate}
	 \item {\bf ellipticity}:\ $\forall \lambda \in\Gamma\ \forall i\in\lbrace1,\cdots,n\rbrace\ \frac{\partial f}{\partial \lambda_i}>0$;
	 \item {\bf concavity}:\ $f$ is concave on $\Gamma$;
	 \item {\bf homogeneity}:\ $\forall t>0\ \forall\lambda\in\Gamma$ $f(t\lambda)=tf(\lambda)$;
	 \item {\bf improved monotonicity}:\ for any compact $K$ in $\Gamma$ and any $C>0$ there is $R_0=R_0(C,K)>0$ such that for any $R\geq R_0$
	 \[ f(\lambda_1,\cdots,\lambda_{n-1},\lambda_n+R)\geq C;\]
	\item{\bf asymptotics}: \ for any compact $K$ in $\Gamma$ and any $C>0$ there is $R_0=R_0(C,K)>0$ such that for any $R\geq R_0$ 
	\[ f(R\lambda)\geq C\] and \[\limsup\limits_{\lambda\longmapsto\lambda_0\in\partial \Gamma}f(\lambda)=0.\]
	\end{enumerate}

The basic examples of pairs $(f,{\Gamma})$ that meet all these requirements are given by $((\sigma_k)^{1/k},\Gamma_k)$ for $k=1,\cdots,n$. We refer to \cite{HL} for a much broader set of examples.

Note that \[ F(D^2_{\mathbb C}u(z))\geq 0\] for any $\Gamma$-admissible function $u$ and any $z\in\Omega$. The asymptotics condition coupled with the concavity implies that \[F(D^2_{\mathbb C}u(z))> 0\] for any strictly admissible function $u$.

As $\Gamma$-admissibility is invariant with respect to unitary change of coordinates, the following additional property of a general Hessian operator $F$ will be very useful:
\begin{equation}\label{unitary}
 \forall A\in\tilde{\Gamma}\ \forall B\in U(n)\ \ F(B^*AB)=F(A),
\end{equation}
where $B^*$ denotes the conjugate transposed matrix to $B$.

In this generality Hessian operators do not posses more symmetries, which is a key observation in the context of our main result as advertised in the Introduction. In the special case of the complex Monge-Amp\`ere operator we have the following additional symmetry following from the properties of the determinant operator:
\begin{proposition}\label{holo}
 Let $u$ be a $C^2$ smooth plurisubharmonic function. Let also $G: U\longmapsto \Omega$ be a holomorphic map from a domain $U \subset \mathbb C^n$ to $\Omega$. Then $u\circ F$ is plurisubharmonic on $U$ and 
 \[ \begin{gathered} \left[ \det \left(\frac{\partial^2 (u\circ G)}{\partial w_s\partial\bar{w}_t}\right)(w)\right]^{1/n}=\left[ \det \left(\frac{\partial^2 u}{\partial z_j\partial\bar{z}_k}\right)\left(G(w)\right)\right]^{1/n} \cdot |\det G'(w)|^{2/n},\end{gathered} \]
 where $G'$ denotes the complex Jacobian matrix of the mapping $G$.
\end{proposition}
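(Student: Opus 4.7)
The claim is a direct consequence of the chain rule, exploiting the fact that $G$ is holomorphic so its anti-holomorphic derivatives vanish. The plan is as follows.

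First, I would compute the $(1,0)$-derivative of $u \circ G$. Writing $G=(G_1,\dots,G_n)$ and using $\partial G_j/\partial \ov{w}_s \equiv 0$, the chain rule yields
\[
\frac{\pa (u\circ G)}{\pa w_s}(w) \;=\; \sum_{j} \frac{\pa u}{\pa z_j}(G(w))\,\frac{\pa G_j}{\pa w_s}(w).
\]
Differentiating once more in $\ov{w}_t$, the terms involving $\partial^2 G_j/\partial w_s \partial \ov{w}_t$ and $\partial \ov{G_k}/\partial w_s$ drop out by holomorphicity of $G$. Hence
\[
\frac{\pa^2 (u\circ G)}{\pa w_s\,\pa \ov{w}_t}(w) \;=\; \sum_{j,k} \frac{\pa^2 u}{\pa z_j\,\pa \ov{z}_k}(G(w))\;\frac{\pa G_j}{\pa w_s}(w)\;\ov{\frac{\pa G_k}{\pa w_t}(w)}.
\]

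Rewriting this identity in matrix form with $H_u := \bigl(\partial^2 u/\partial z_j \partial \ov{z}_k\bigr)$ and $G'(w)$ the complex Jacobian,
\[
H_{u\circ G}(w) \;=\; G'(w)^{T}\; H_u(G(w))\; \ov{G'(w)}.
\]
Plurisubharmonicity of $u\circ G$ is immediate: for any vector $\xi \in \cc^n$, $\xi^{T} H_{u\circ G}(w)\,\ov{\xi} = (G'(w)\xi)^{T} H_u(G(w)) \ov{G'(w)\xi} \geq 0$, since $H_u \geq 0$ by assumption. Taking determinants and using multiplicativity together with $\det \ov{G'(w)} = \ov{\det G'(w)}$ gives
\[
\det H_{u\circ G}(w) \;=\; |\det G'(w)|^{2}\; \det H_u(G(w)),
\]
and extracting $n$-th roots yields the stated identity.

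There is essentially no obstacle here; the argument is purely formal and relies only on the Cauchy--Riemann equations for $G$ and the multiplicativity of the determinant. The only point worth a moment of care is the vanishing of the mixed second derivatives of $G_j$, which is where the holomorphicity of $G$ is used in an essential way and which explains why the analogous statement fails for merely smooth changes of variables.
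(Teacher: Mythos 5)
Your proof is correct and complete. The paper actually states Proposition \ref{holo} without proof, treating it as a classical fact, so there is no in-text argument to compare against; your chain-rule computation, the identification of the transformation rule $H_{u\circ G}(w)=G'(w)^{T}H_u(G(w))\overline{G'(w)}$, and the conclusion via multiplicativity of the determinant form precisely the standard argument one would supply. One small remark: the paper's statement contains a typo (``$u\circ F$'' should read ``$u\circ G$''), which you have silently and correctly repaired.
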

We remark that if $G$ happens to be locally biholomorphic then $|\det G'|>0$ and hence the right hand side has the same regularity as 
\[ \left[ \det \left(\frac{\partial^2 u}{\partial z_j\partial\bar{z}_k}\right)(z)\right]^{1/n}.\] 
This additional symmetry plays a central role in the construction of the adapted coordinates of Section 5.

\subsection{Krylov maximum principle} 
In the proof of the weakly interior estimate we shall make use of the complex Krylov maximum principle. We state it in a way so that it becomes the complex analogue of Lemma 2.1 in \cite{Iv} or Lemma 3.1 in \cite{ITW}:
 \begin{theorem}\label{phomog}
 	Let $\Omega$ be a domain in $\cc$ and let $w,v: \Omega\times\mathbb C^{n+1}\ni (z,\zeta')\longmapsto \mathbb R$ be $C^2$ functions. Let $L$ be a degenerate elliptic linear operator 
 	$$L[w]=\sum_{i,j=1}^{2n+1} A^{i\bar{j}}w_{i\bar{j}}-\sum_{k=1}^{2n+1}\Re(b_kw_k)$$
 	 for some semi-positive Hermitian matrix valued function $A$ of dimension $(2n+1)\times(2n+1)$ and $\mathbb C^{n+1}$-vector valued function $b$. Then, if $L[v]<0$ and $v>0$ we have
 
\begin{equation} \begin{gathered} \label{krylovmax}	
		\forall (z,\zeta')\in\Omega^N\ \ \
		\frac{w}{v}(z,\zeta')\leq \max \Big\lbrace \sup\limits_{\Omega^N} \frac{L[w]}{L[v]}, \sup\limits_{\partial \Omega^N}\frac wv \Big\rbrace\\
		\leq 	\max \Big\lbrace \sup\limits_{\Omega^N} \frac{-L[w]}{-L[v]}, \sup\limits_{\partial \Omega^N}\frac wv \Big\rbrace,
\end{gathered}\end{equation}
with $\Omega^N:=\Omega\times\lbrace\zeta'\in\mathbb C^{n+1}|\ \frac12<||\zeta'||<2\rbrace$.
\end{theorem}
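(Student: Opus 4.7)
The statement is the classical ratio form of the Aleksandrov--Bakelman--Pucci--Krylov maximum principle in the complex setting, and is the direct analogue of Lemma 2.1 of \cite{Iv} / Lemma 3.1 of \cite{ITW}. The plan is to compare $w$ with a suitable multiple of $v$ at an interior maximum of the ratio $w/v$ and use the elliptic structure of $L$ together with the sign of $L[v]$.

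Set $M:=\sup_{\Omega^N}(w/v)$. If $M\le\sup_{\partial\Omega^N}(w/v)$ the first inequality is trivial, so assume the opposite. Since $\Omega^N$ is bounded (as $\Omega$ is bounded) and $w/v$ is continuous on $\overline{\Omega^N}$ by the $C^2$ hypothesis and the assumption $v>0$, the supremum $M$ is attained at some interior point $p_0\in\Omega^N$. Form the auxiliary function $\psi:=w-Mv$. Then $\psi(p_0)=v(p_0)\bigl((w/v)(p_0)-M\bigr)=0$ while $\psi=v(w/v-M)\le 0$ on $\Omega^N$, so $\psi$ attains an interior maximum at $p_0$. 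Hence $\nabla\psi(p_0)=0$ and the full real Hessian of $\psi$ at $p_0$ is negative semi-definite; in particular its $(1,1)$-part, i.e. the Hermitian matrix $(\psi_{i\bar j}(p_0))$, is negative semi-definite.

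Evaluating $L$ at $p_0$, the drift contribution $\sum_k\Re(b_k\psi_k)$ vanishes and one is left with
\[ L[\psi](p_0)=\sum_{i,j=1}^{2n+1}A^{i\bar j}(p_0)\,\psi_{i\bar j}(p_0)\le 0, \]
since the Frobenius-type pairing of the positive semi-definite Hermitian matrix $A(p_0)$ with the negative semi-definite Hermitian matrix $(\psi_{i\bar j}(p_0))$ is non-positive. Thus $L[w](p_0)\le M\,L[v](p_0)$, and since $L[v]<0$, dividing reverses the inequality and yields
\[ M\le \frac{L[w](p_0)}{L[v](p_0)}\le \sup_{\Omega^N}\frac{L[w]}{L[v]}, \]
which is the first claimed inequality. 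The second inequality in the statement is a pointwise identity, as $L[w]/L[v]=(-L[w])/(-L[v])$.

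The only genuinely complex-analytic ingredient is the elementary linear-algebra fact that $\sum A^{i\bar j}H_{i\bar j}\le 0$ whenever $A\ge 0$ and $H\le 0$ are Hermitian. I do not foresee a real obstacle; the main point to verify carefully is that the complex Hessian of a real-valued function at an interior maximum is indeed $\le 0$ as a Hermitian matrix, which is immediate from non-positivity of the full real Hessian at that point. Should the supremum not be attained in more general setups, one can instead apply the argument to $\psi-\eps\chi$ for a smooth penalty $\chi$ blowing up on $\partial\Omega^N$ and let $\eps\downarrow 0$, but this is not needed under the present hypotheses.
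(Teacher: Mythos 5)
Your argument is correct and reaches the same conclusion via essentially the same mechanism as the paper: both rest on examining the interior maximum of $w/v$ and invoking degenerate ellipticity there. The paper computes $L[w/v]$ directly at the maximizer, using the vanishing of $(w/v)_i$ to cancel the cross terms $A^{i\bar j}(w_i v_{\bar j}-v_i w_{\bar j})$ and the drift contribution, arriving at $0\ge L[w/v]=(L[w]v-L[v]w)/v^2$. You instead introduce the linear auxiliary function $\psi=w-Mv$, which has an interior maximum at the same point, and apply the second-order condition: the complex Hessian $(\psi_{i\bar j})$ is $\le 0$ there because it is a Hermitian contraction of the negative semi-definite real Hessian, so $L[\psi]\le 0$, giving $L[w]\le M\,L[v]$. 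The two routes are algebraically equivalent (the identity $L[w/v]\cdot v=L[\psi]/v$ holds at the critical point), but yours is a shade cleaner since it avoids differentiating a quotient and dispenses with the cancellation bookkeeping; the paper's version, on the other hand, exhibits the quotient structure that motivates the ratio form of the estimate. Your remark about the penalty $\psi-\eps\chi$ when the supremum is not attained is a sensible extra safeguard, though unnecessary here.
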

\begin{proof}
	We reproduce the argument in \cite{ITW} for the sake of completeness.
	
	If the maximum occurs on the boundary, we are through. Otherwise, at a maximum point we have
	$$0=\left(\frac{w}{v}\right)_i=\frac{w_iv-v_iw}{v^2}$$
	and
	$$0\geq L\left[\frac{w}{v}\right]=\frac{L[w]v+A^{i\bar{j}}w_iv_{\bar{j}}-A^{i\bar{j}}v_iw_{\bar{j}}-L[v]w}{v^2}=\frac{L[w]v-L[v]w}{v^2}.$$
Rearranging terms, we obtain the claimed result.
\end{proof}

\section{Initial adjustments}\label{initial}
In this section, we gather some standard PDE arguments related to the complex Monge-Amp\`ere and other complex Hessian equations.
\subsection{Basic reductions}
In the following, we initiate the analysis of the Dirichlet problem:
\begin{equation}\label{tobeginwith}
\begin{cases}F(D^2_{\mathbb C}u(z))=g(z),\\
u|_{\partial\Omega}=\varphi.
    \end{cases}
\end{equation}
For the well-posedness of (\ref{tobeginwith}) a necessary geometric assumption is the strict $\Gamma$ pseudoconvexity of $\Omega$, i.e. the assumption is that there exists a {negative} defining function $\rho$, {i.e. minus the one from Definition \ref{pseudoconv}}, of class $C^{3,1}$ in a neighborhood of {$\overline{\Omega}$} such that the vector of the eigenvalues of the complex Hessian of $\rho$ belongs to $\Gamma$ for any $z\in {\overline{\Omega}}$ - see \cite{Li} and \cite{CNS85} for the real analogue of this assumption. Note that for $\Gamma=\Gamma_n$ this notion is the same as strict pseudoconvexity.

First of all, we recall the standard fact that it suffices to work under the assumptions $\partial\Omega\in C^4$, $\varphi\in C^4(\partial\Omega)$ and $g\in C^2(\Omega)$. Indeed, approximating $(\Omega,\varphi,g)$ by $\Omega_m,\varphi_m,g_m$, where:
\begin{enumerate}
 \item $\Omega_m$ is an increasing sequence of $C^4$ smoothly bounded domains within $\Omega$ such that $\cup_{m=1}^{\infty}\Omega_m=\Omega$ (as $\Omega$ is assumed to have strictly $\Gamma$ admissible defining function the same applies to $\Omega_m$);
 \item $\varphi_m\in C^4(\partial\Omega_m)$ and $\varphi_m\longmapsto\varphi$ in $C^{3,1}$ norm\footnote{To be precise one compares $\varphi_m\circ T_m$ and $\varphi$ with $T_m$ being the diffeomorphism (existing for large $m$), which is the projection of $\partial\Omega_m$ onto $\partial\Omega$ using a retraction in a tubular neighborhood of $\partial\Omega$.};
 \item $g_m\in {C^2}(\overline{\Omega}_m)$ and $g_m\longmapsto g$ in ${C^{1,1}}$ norm;
\end{enumerate}
it suffices to bound the solution $u_m$ in $C^2$ norm independently of the modulus of continuity of fourth derivatives of $\varphi_m,\partial\Omega_m$ and second derivatives of $g_m$.

Another standard reduction is to replace $g$ by $g+\varepsilon$ for some small $\varepsilon>0$. Then \cite{CKNS86} (or \cite{Li} for a general Hessian equation) implies that the solution $u_{\varepsilon}$ exists and is of class $C^{2,\alpha}$ for some $\alpha$. Thus, the analysis is reduced to $\varepsilon$ independent (i.e. $\inf g$ independent) second order estimates for $u_{\varepsilon}$. Additionally, this approximation justifies sequential computations where otherwise division by zero could have appeared.

To wrap up, from now on we assume that:
\begin{equation}\label{Basicassumptions}
\end{equation}
\begin{enumerate}
 \item $\Omega$ {\rm is}\ $C^4$ {\rm smooth\ strictly} $\Gamma$-pseudoconvex;
 \item $\varphi\in C^4(\partial\Omega)$;
 \item $0<g\in {C^2}(\overline{\Omega})$.
\end{enumerate}

\subsection{Basic estimates}
In this subsection, we collect the standard estimates associated with complex Hessian equation satisfying the conditions (\ref{below}). These are all standard now, and we include them only for the sake of completeness.

\begin{proposition}\label{unif}
Let $\Omega$ be a $C^4$ smoothly bounded domain in $\mathbb C^n$. Let the Hessian operator $F$ and the associated cone $\Gamma$ be as in the preliminaries. Assume that for some $C^4$ smooth function $\varphi$ on $\partial\Omega$ and some $0\leq g\in C^2(\overline{\Omega})$ the $\Gamma$-admissible function $u$, continuous up to the boundary, solves
$$\begin{cases}F(D^2_{\mathbb C}u(z))=g(z),\\
u|_{\partial\Omega}=\varphi.
    \end{cases}$$
    Then there is a constant $C$ depending on $\varphi,g$ and $\Omega$ such that $$\max_{\overline \Omega}|u|\leq C.$$

\end{proposition}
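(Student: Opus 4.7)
The plan is to split the estimate into an upper bound, obtained immediately from the admissibility of $u$, and a lower bound, produced by constructing an explicit subsolution barrier from the strictly $\Gamma$-pseudoconvex defining function of $\Omega$.

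For the upper bound, I would note that since $\Gamma \subset \Gamma_1$, any $\Gamma$-admissible function $u$ has $\mathrm{tr}\, D^2_{\mathbb C}u \geq 0$, i.e. $u$ is subharmonic on $\Omega$ in the ordinary sense. The classical maximum principle then gives $\sup_{\overline{\Omega}} u \leq \max_{\partial \Omega} \varphi \leq \|\varphi\|_{C^0(\partial\Omega)}$, which depends only on $\varphi$.

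For the lower bound, I would exploit the strict $\Gamma$-pseudoconvexity by taking the negative defining function $\rho\in C^{3,1}$ of $\overline{\Omega}$ whose complex Hessian has eigenvalues in a compact set $K\Subset \Gamma$ on $\overline{\Omega}$, and a $C^{1,1}$-extension $\tilde{\varphi}$ of $\varphi$ to a neighborhood of $\overline{\Omega}$. Set $v_A := A\rho + \tilde\varphi$. On $\partial\Omega$ we have $\rho = 0$ so $v_A = \varphi$, while in $\Omega$ the function $v_A$ is bounded below by $\min \tilde\varphi - A\|\rho\|_{C^0(\overline\Omega)}$. Using the $1$-homogeneity of $F$ from condition (\ref{below})(3),
\[
F(D^2_{\mathbb C} v_A) = A\, F\bigl(D^2_{\mathbb C}\rho + A^{-1}D^2_{\mathbb C}\tilde\varphi\bigr),
\]
and since the argument lies in a small neighborhood of $D^2_{\mathbb C}\rho$ for $A$ large enough, the continuity of $F$ on $\tilde\Gamma$ (ensured by concavity in (\ref{below})(2)) together with the positive lower bound $F(D^2_{\mathbb C}\rho)\geq c>0$ gives $F(D^2_{\mathbb C}v_A) \geq \tfrac{Ac}{2}$ and $v_A$ is $\Gamma$-admissible. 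Choosing $A$ large enough so that $\tfrac{Ac}{2} > \max_{\overline\Omega} g$ makes $v_A$ a strict subsolution of the equation.

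Finally I would conclude by a direct maximum-principle comparison: if the continuous function $u - v_A$ attained a negative value in $\Omega$, it would have an interior minimum at some $z_0 \in \Omega$ (since $u - v_A = 0$ on $\partial\Omega$), and there $D^2_{\mathbb C}(u-v_A)(z_0)$ would be a positive semidefinite Hermitian matrix, i.e.\ its eigenvalues would lie in $\overline{\Gamma_n}$. The ellipticity assumption (\ref{below})(1), expressed as in Definition \ref{ellip}, would then yield
\[
g(z_0) = F(D^2_{\mathbb C} u(z_0)) \geq F(D^2_{\mathbb C} v_A(z_0)) > \max_{\overline\Omega} g,
\]
a contradiction. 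Hence $u \geq v_A \geq \min \varphi - A\|\rho\|_{C^0(\overline\Omega)}$ throughout $\Omega$, completing the proof. The only delicate point is arranging the strictness $F(D^2_{\mathbb C}v_A) > \max g$ so that the comparison goes through without a separate comparison principle; this is handled cleanly by the homogeneity of $F$, and uses none of the non-degeneracy of $g$.
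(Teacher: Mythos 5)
Your argument is correct and takes essentially the same route as the paper: the upper bound comes from the subharmonicity of $\Gamma$-admissible functions (the paper compares against the harmonic extension of $\varphi$, which is bounded by $\max\varphi$, so this is the same fact), and the lower bound is a barrier comparison using the homogeneity of $F$ to produce a strict subsolution. The only cosmetic difference is the choice of barrier: the paper uses the simple quadratic $a(\|z\|^2 - B)$, whose complex Hessian is $aI \in \tilde\Gamma_n$, while you use $A\rho + \tilde\varphi$ built from the strictly $\Gamma$-admissible defining function. Both work; yours matches the boundary data exactly rather than merely lying below it, but that refinement is not needed for the $L^\infty$ bound.
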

\begin{proof}(Sketch) 
As $u$ is in particular subharmonic it is bounded from above by the harmonic extension $h_\varphi$ of the boundary data. For the lower bound, one applies the maximum principle with \[ v(z):=a(||z||^2-B),\] where $B$ is so large that $v<\varphi$ on $\partial\Omega$, whereas $a>1$ is taken so that $F(D^2_{\mathbb C}v(z))\geq g(z)$ on $\overline{\Omega}$.
\end{proof}
\begin{proposition}\label{c1}
With $\Omega, F, \Gamma, g, \varphi$ and $u$ as above, there is a constant $C$ such that
$$\max_{\overline{\Omega}}|Du|\leq C.$$
\end{proposition}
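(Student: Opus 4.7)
The plan is to estimate $|Du|$ separately on $\partial\Omega$ and in the interior of $\Omega$, then combine the two bounds.

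\emph{Boundary.} Tangential derivatives of $u$ along $\partial\Omega$ coincide with those of $\varphi$, and are thus controlled by $\|\varphi\|_{C^1(\partial\Omega)}$. For the inner normal derivative at $z_0\in\partial\Omega$, I would sandwich $u$ between two barriers that agree with $\varphi$ on $\partial\Omega$: an upper barrier $h_\varphi$, the harmonic extension of $\varphi$, which dominates $u$ by subharmonicity (since $\Gamma\subset\Gamma_1$ forces $\Delta u\geq 0$); and a lower barrier $\underline{u} = \varphi^\sharp + K\rho$, where $\varphi^\sharp$ is a smooth extension of $\varphi$ into $\overline{\Omega}$, $\rho$ is the negative strictly $\Gamma$-admissible defining function from (\ref{Basicassumptions}), and $K$ is chosen so large that $D^2_{\mathbb{C}}\underline{u}\in\tilde{\Gamma}$ with $F(D^2_{\mathbb{C}}\underline{u})\geq\sup_{\overline{\Omega}}g$; existence of such $K$ uses the strict $\Gamma$-admissibility of $\rho$ together with the asymptotic growth condition in (\ref{below}). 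Comparison then gives $\underline{u}\leq u\leq h_\varphi$ in $\Omega$, and differentiating along the inner normal at $z_0$ controls $|\partial_\nu u(z_0)|$ in terms of $|\partial_\nu\underline{u}(z_0)|$ and $|\partial_\nu h_\varphi(z_0)|$, both of which depend only on the data.

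\emph{Interior.} Apply the linearised operator $L := F^{i\bar{j}}\partial_i\partial_{\bar{j}}$ to an auxiliary function of the form $w = |Du|^2 e^{\phi(u)}$ for a suitable $\phi$. Differentiating $F(D^2_{\mathbb{C}}u) = g$ in $\partial_{z_k}$ and $\partial_{\bar{z}_k}$ yields $Lu_k = g_k$ and $Lu_{\bar{k}} = g_{\bar{k}}$, while the $1$-homogeneity of $f$ gives Euler's identity $Lu = g$. A standard computation at an interior maximum of $w$ produces an inequality in which the non-negative contribution $F^{i\bar{j}}\sum_l(u_{li}u_{\bar{l}\bar{j}}+u_{l\bar{j}}u_{\bar{l}i})$ coming from $L(|Du|^2)$ is dominated by terms controlled by the data once $\phi$ is chosen in terms of $\|u\|_{C^0}$ and the $C^1$-norm of $g$. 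This forces a bound on $|Du|$ at the maximum point, and together with the boundary bound yields the claim.

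The only real subtlety, relevant later for the degenerate limit of Section \ref{initial}, is to keep the resulting constant independent of $\inf g$; this refinement, in the spirit of Blocki's gradient estimate that replaces $\|g\|_{C^1}$ by $\|g^{1/n}\|_{C^{0,1}}$ in the Monge-Amp\`ere case, is not required at the level of the present proposition and we omit it here.
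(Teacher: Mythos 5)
Your boundary argument is essentially the paper's: both squeeze $u$ between an upper barrier (the harmonic extension $h_\varphi$, using $\Gamma\subset\Gamma_1\Rightarrow u$ subharmonic) and a lower barrier built from the strictly $\Gamma$-admissible defining function $\rho$ (you use $\varphi^\sharp+K\rho$, the paper uses $h_\varphi+a\rho$ — a cosmetic difference). Since the tangential derivatives are supplied by $\varphi$, the two barriers pin down the normal derivative on $\partial\Omega$.

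The interior argument, however, takes a genuinely different route. The paper does something much lighter: fix a constant direction $T$, differentiate the equation once to get $\mathcal{L}u_T=g_T$, and note that $w=u_T+a\|z\|^2$ satisfies $\mathcal{L}w=g_T+a\sum_i F^{i\bar i}\geq 0$ once $a\gtrsim\|g\|_{C^1}$, because the $1$-homogeneity and concavity of $f$ give the a priori lower bound $\sum_i F^{i\bar i}\geq f(1,\dots,1)>0$. The maximum principle then yields $\max_{\overline\Omega}|u_T|\leq C+\max_{\partial\Omega}|u_T|$, reducing everything to the boundary estimate. This is two lines and is manifestly independent of $\inf g$. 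Your route applies the maximum principle instead to the Blocki-type quantity $|Du|^2e^{\phi(u)}$. This is heavier and, more to the point, the sketch does not close: after using $\nabla w=0$ at the interior maximum, the $\phi$-contribution one is left with is $|Du|^2\bigl[(\phi''-(\phi')^2)F^{i\bar j}u_iu_{\bar j}+\phi' g\bigr]$, and this is the term that must provide a lower bound on $|Du|$; but if $g$ is small \emph{and} $Du$ is nearly in the kernel of $(F^{i\bar j})$, both summands vanish and no choice of $\phi$ depending only on $\|u\|_{C^0}$ and $\|g\|_{C^1}$ rescues the estimate. You flag the degeneracy as a ``refinement not required at the level of the present proposition,'' but for this paper it is the whole point — the constant must be uniform in $\inf g$ to survive the limit $\varepsilon\to 0$ in the approximation scheme of Section~\ref{initial}, and that uniformity is exactly what the paper's simpler $u_T+a\|z\|^2$ trick gives for free. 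Also, your statement that the non-negative block $F^{i\bar j}\sum_l(u_{li}u_{\bar l\bar j}+u_{l\bar j}u_{\bar l i})$ ``is dominated by terms controlled by the data'' reverses the role of that term: it sits on the favorable side and is simply discarded; it is the $\phi$-term that has to carry the estimate.
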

\begin{proof}(Sketch) 
Fix a constant real vector field $T$. The function 
\[ w(z):=u_{T}(z)+a||z||^2\] satisfies
 \[ \sum_{k,j=1}^n\frac{\partial F}{\partial u_{k\bar{j}}}\Big(D^2_{\mathbb C}u(z)\Big)\frac{\partial^2 w}{\partial z_k\partial\bar{z}_j}\geq 0\]
 for sufficiently large $a$ (depending on $C^1$ norm of $g$).

 As the linearized operator of $F$ is (degenerate) elliptic one has
 \[ \max_{\overline{\Omega}}|u_T|\leq C+\max_{\partial{\Omega}}|u_T|.\]

For the boundary estimate observe that $u$ is squeezed between the harmonic extension $h_\varphi$ of $\varphi$ and

\begin{equation} \label{syli}
	h_\varphi + a \rho
	\end{equation}
	where we recall that $\rho$ is strictly $\Gamma$-admissible {negative} defining function and $a$ is taken large enough. 
\end{proof}
\begin{proposition}\label{toboun}
 With the setting as above
 $$\max_{\overline{\Omega}}|u_{[T][T]}|\leq C+\max_{\partial\Omega}|u_{[T][T]}|.$$
\end{proposition}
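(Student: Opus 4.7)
The plan is to run the same auxiliary-function / maximum-principle argument as in Proposition~\ref{c1}, but one order higher, using the concavity assumption in (\ref{below}) to absorb the error term produced by differentiating the fully nonlinear equation twice.

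First I would differentiate $F(D^2_{\mathbb C}u)=g$ twice in the direction of the constant real vector field $T$. Because $T$ has constant coefficients, $\partial_T$ commutes with $\partial_i$ and $\partial_{\bar j}$, and the chain rule applied to $s\mapsto F\bigl(D^2_{\mathbb C}u(z+sT)\bigr)$ gives
\[
F^{i\bar j}(D^2_{\mathbb C}u)\,(u_{[T][T]})_{i\bar j}\;+\;F^{i\bar j,\,k\bar l}(D^2_{\mathbb C}u)\,(u_{i\bar j})_{T}(u_{k\bar l})_{T}\;=\;g_{[T][T]},
\]
with $F^{i\bar j}=\partial F/\partial u_{i\bar j}$ and $F^{i\bar j,\,k\bar l}$ its second derivative. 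Since $T$ is real, the matrix $\bigl((u_{i\bar j})_{T}\bigr)$ is Hermitian, so by concavity of $f$ on $\Gamma$ (condition (\ref{below})(2)) the quadratic-form term is nonpositive and can be dropped. Writing $L:=F^{i\bar j}\partial_i\partial_{\bar j}$, this produces
\[
L\bigl[u_{[T][T]}\bigr]\;\geq\;g_{[T][T]}\;\geq\;-\|g\|_{C^2(\overline\Om)}.
\]

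Next I would set $w:=u_{[T][T]}+a\|z\|^2$ for a constant $a$ to be chosen. The trace $\sum_{k=1}^n F^{k\bar k}$ admits a uniform positive lower bound depending only on $F$: concavity of $f$ at $\lambda\in\Gamma$ with reference point $(1,\dots,1)\in\Gamma_n\subset\Gamma$, combined with Euler's identity coming from the homogeneity (\ref{below})(3), yields $\sum_i\partial_{\lambda_i}f(\lambda)\geq f(1,\dots,1)>0$. Therefore, for $a$ large enough in terms of $F$ and $\|g\|_{C^2}$,
\[
L[w]\;=\;L\bigl[u_{[T][T]}\bigr]+a\sum_k F^{k\bar k}\;>\;0,
\]
and the maximum principle for the (possibly degenerate) elliptic operator $L$ forces $w$ to attain its maximum on $\partial\Om$, whence
\[
\max_{\overline\Om}u_{[T][T]}\;\leq\;\max_{\partial\Om}u_{[T][T]}+a\,\diam(\Om)^2\;\leq\;\max_{\partial\Om}|u_{[T][T]}|+C.
\]

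For the matching lower bound I would use that $\Gamma\subset\Gamma_1$ makes $u$ subharmonic, so for any orthonormal frame $T=T_1,T_2,\dots,T_{2n}$ of constant real vector fields one has $\sum_{j=1}^{2n}u_{[T_j][T_j]}\geq 0$ pointwise on $\overline\Om$; applying the already-established upper bound to each $T_j$ with $j\geq 2$ converts this into the desired lower bound on $u_{[T][T]}$. I do not anticipate any genuine obstacle here: the only nontrivial structural input is concavity of $F$, which is precisely what prevents the $F^{i\bar j,\,k\bar l}$-term from having an uncontrollable sign and ruining the reduction.
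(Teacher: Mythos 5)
Your proposal is correct and follows essentially the same route as the paper's sketch: reduce to the upper bound via subharmonicity, differentiate the equation twice in the direction $T$, use concavity of $F$ to discard the nonpositive term $F^{i\bar j,k\bar l}(u_{i\bar j})_T(u_{k\bar l})_T$, and close with the barrier $w=u_{[T][T]}+a\|z\|^2$ and the maximum principle for the linearized degenerate elliptic operator. Your extra observations (the Hermitian structure of $(u_{i\bar j})_T$ for real $T$, and the lower bound $\sum_k F^{k\bar k}\geq f(1,\dots,1)$ via concavity plus Euler's identity) are exactly the standard justifications the paper leaves implicit.
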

\begin{proof}(Sketch) 
The subharmonicity of $u$ implies that it suffices to show the upper bound of $u_{[T][T]}$. This in turn follows from
\[ \sum_{k,j=1}^n\frac{\partial F}{\partial u_{k\bar{j}}}\Big(D^2_{\mathbb C}u(z)\Big)\frac{\partial^2 w}{\partial z_k\partial\bar{z}_j}\geq 0,\]
where this time
\[ w(z):=u_{[T][T]}(z)+a||z||^2\] for $a$ large enough. It is worth pointing out that in this proof the concavity of $F$ plays a role, as it allows one to control the third order terms in $u$ appearing after the second differentiation of the equation in direction $T$.
\end{proof}

\begin{proposition}\label{tantan}
With the setting as above and any local unit length real vector field $T$ tangential to the boundary, one has
\[ \max_{p\in\partial\Omega}|u_{[T][T]}(p)|\leq C.\]
\end{proposition}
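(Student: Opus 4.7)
The plan is to exploit the elementary fact that, along $\partial\Omega$, the function $u$ is prescribed by $\varphi$, so differentiating $u$ in directions tangent to $\partial\Omega$ twice gives the same answer as doing so for any smooth extension of $\varphi$, up to first-order commutator terms that are already controlled by Proposition \ref{c1}.

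Concretely, first I would fix a boundary point $p_0$ and work in a small neighborhood $U$ of $p_0$. There, I would extend $\varphi$ to a $C^4$ function $\tilde\varphi$ on $U$, the extension being bounded in $C^4$ by a constant depending only on $\|\varphi\|_{C^4(\partial\Omega)}$ and on the geometry of $\partial\Omega$. Next I would extend the unit real vector field $T$ from $\partial\Omega\cap U$ to a $C^3$ vector field on $U$ that remains tangent to $\partial\Omega$; concretely, one can take $T$ first tangent on $\partial\Omega$ and then parallel-transport or simply pull-back via a retraction in a tubular neighborhood. Its $C^1$ norm is then controlled by $\|\partial\Omega\|_{C^2}$.

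The key step is the identity $T\bigl(T(u-\tilde\varphi)\bigr)\bigr|_{\partial\Omega}=0$, which holds because $u-\tilde\varphi$ vanishes on $\partial\Omega$ and $T$ is tangent to $\partial\Omega$, so each application of $T$ preserves the vanishing. Expanding $T(T(\cdot))$ in local coordinates,
\[
T(T(v))=v_{TT}+(TT^i)v_i+(TT^{\bar i})v_{\bar i},
\]
where $v_{TT}$ is the pure second-derivative part in the bracket notation (i.e.\ $v_{[T][T]}=v_{TT}$ for a real vector field $T$). Applying this with $v=u$ and $v=\tilde\varphi$ and subtracting yields, on $\partial\Omega\cap U$,
\[
u_{[T][T]}=\tilde\varphi_{[T][T]}+(TT^i)(\tilde\varphi-u)_i+(TT^{\bar i})(\tilde\varphi-u)_{\bar i}.
\]
The first term on the right is controlled by $\|\tilde\varphi\|_{C^2}$, hence ultimately by $\|\varphi\|_{C^2(\partial\Omega)}$ and $\|\partial\Omega\|_{C^2}$. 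For the remaining terms I would invoke Proposition \ref{c1}, which bounds $\max_{\overline\Omega}|Du|$ independently of the Hessian; together with the $C^1$ bound on $\tilde\varphi$ and the $C^1$ bound on $T$ (hence on $TT^i$, $TT^{\bar i}$), this gives the required estimate at $p_0$.

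Since the estimate is pointwise and the constants are uniform in $p_0\in\partial\Omega$ (the construction of $\tilde\varphi$ and the tangential extension of $T$ can be performed via a finite covering of $\partial\Omega$ by coordinate charts, as $\partial\Omega$ is compact), taking supremum over $p\in\partial\Omega$ concludes the proof. I do not anticipate a genuine obstacle here: the only mild technical point is the tangential extension of $T$ off the boundary, but any smooth extension that stays tangent to $\partial\Omega$ along $\partial\Omega$ works, and such an extension exists by the tubular neighborhood theorem applied to $\partial\Omega$, which is at least $C^{3,1}$ under our standing assumptions.
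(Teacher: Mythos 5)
Your proof is correct and takes essentially the same route as the paper: differentiate the boundary identity $u-\varphi=0$ twice tangentially and absorb the lower-order commutator terms using the gradient bound from Proposition \ref{c1}. The only cosmetic difference is that the paper writes the first-order error term geometrically as $\kappa(p)(u-\varphi)_\eta$ (decomposing $TT$ into tangential and normal parts along $\partial\Omega$), whereas you keep it as $(TT^i)(\tilde\varphi-u)_i+(TT^{\bar i})(\tilde\varphi-u)_{\bar i}$ and bound it crudely by the $C^1$ estimates, which is equally valid.
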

\begin{proof}
This follows from the standard formula
\[ 0=T((u-\varphi)_{T})(p)=(u-\varphi)_{[T][T]}(p)+\kappa(p)(u-\varphi)_{\eta}.\]
 Here $\kappa(p,T)$ denotes the curvature in direction $T$.
\end{proof}
\begin{proposition}\label{tannor}
With the setting as above one has
\[ \max_{p\in\partial\Omega}|u_{[T][\eta]}(p)|\leq C.\]
\end{proposition}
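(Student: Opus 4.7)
The strategy is a boundary barrier argument of Caffarelli-Kohn-Nirenberg-Spruck type. Fix $p\in\partial\Omega$, extend $T$ smoothly to a neighborhood $U$ of $p$ so that it remains tangent to $\partial\Omega$ along $\partial\Omega\cap U$, and let $\tilde\varphi$ be a $C^4$ extension of $\varphi$ to $U$. Consider the auxiliary function
\[
h:=(u-\tilde\varphi)_T
\]
on $\Omega_r:=\Omega\cap B(p,r)$ for $r>0$ small and fixed. Because $T$ is tangential and $u-\tilde\varphi$ vanishes on $\partial\Omega$, one has $h=0$ on $\partial\Omega\cap B(p,r)$, while on $\partial B(p,r)\cap\bar\Omega$ Proposition \ref{c1} bounds $|h|\leq C_0$ independently of the Hessian of $u$. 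The plan is to construct a barrier $V$ satisfying $V(p)=0$, $V\geq|h|$ on $\partial\Omega_r$, and $L[V]\leq-|L[h]|$ on $\Omega_r$, where $L:=F^{i\bar j}(D^2_{\mathbb C}u)\partial_i\partial_{\bar j}$ denotes the linearized operator. The weak maximum principle applied to $V\pm h$ will force $|h|\leq V$ on $\Omega_r$; since both vanish at $p$, comparing inner-normal derivatives yields $|\partial_\eta h(p)|\leq \partial_\eta V(p)\leq C$. Combined with the bounded contributions of derivatives of $\tilde\varphi$ and $T$ together with Proposition \ref{c1}, this gives the claimed bound on $u_{[T][\eta]}(p)$.

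Take the barrier in the standard form
\[
V:=A(-\rho)+B|z-p|^2,\qquad A\gg B\gg 1,
\]
with $\rho$ the $\Gamma$-admissible negative defining function. Then $V\geq 0$ on $\bar\Omega\cap U$, $V(p)=0$, $V\geq Br^2$ on $\partial B(p,r)\cap\bar\Omega$, and $V=B|z-p|^2\geq 0$ on $\partial\Omega\cap B(p,r)$, so choosing $B\geq C_0/r^2$ ensures $V\geq|h|$ on $\partial\Omega_r$. The strict $\Gamma$-admissibility of $-\rho$ gives $F^{i\bar j}\rho_{i\bar j}\geq c\sum_i F^{i\bar i}$ for some uniform $c>0$, whence
\[
L[V]=-A F^{i\bar j}\rho_{i\bar j}+B\sum_i F^{i\bar i}\leq -(Ac-B)\sum_i F^{i\bar i},
\]
which is as negative as desired once $A$ is taken large. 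It thus remains to establish $|L[h]|\leq C_1\bigl(1+\sum_i F^{i\bar i}\bigr)$ with $C_1$ independent of $\|D^2u\|$; the choice of $A$ then closes the maximum principle argument.

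The main obstacle is precisely the estimation of $L[h]$ without using an a priori second order bound for $u$. Differentiating the equation in direction $T$ and using $1$-homogeneity of $F$ (so that $L[u]=g$) produces
\[
L[u_T]=T(g)+F^{i\bar j}\bigl(T^k_{i\bar j}u_k+T^k_i u_{k\bar j}+T^k_{\bar j}u_{ik}\bigr)+(\text{conjugate terms}).
\]
The zeroth-order-in-Hessian contributions are controlled via Proposition \ref{c1}. The Hermitian mixed terms $F^{i\bar j}T^k_i u_{k\bar j}$ are, in a frame diagonalizing $F^{i\bar j}$, handled by Cauchy-Schwarz combined with $|u_{k\bar j}|^2\leq u_{k\bar k}u_{j\bar j}$ (valid by admissibility of $u$) and the identity $F^{i\bar j}u_{i\bar j}=g$, yielding the required form of estimate. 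The genuine difficulty is the non-Hermitian piece $F^{i\bar j}T^k_{\bar j}u_{ik}$, which contains the "forbidden" holomorphic-holomorphic Hessian entries that are not directly controlled by the linearized operator. The standard remedy is to pass from $h$ to the modified quantity
\[
\tilde h:=h\pm\delta\sum_k|(u-\tilde\varphi)_{z_k}|^2,
\]
whose contribution to $L$ includes the positive semi-definite form $\delta\, F^{i\bar j}(u-\tilde\varphi)_{ki}\overline{(u-\tilde\varphi)_{kj}}$; an appropriate choice of sign of $\delta$ allows absorbing the $u_{ik}$ cross terms via Cauchy-Schwarz at the cost of an extra multiple of $\sum_i F^{i\bar i}$, which is again dominated by enlarging $A$. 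This quadratic correction is the main technical step required to push the classical real-variable argument through in the complex setting.
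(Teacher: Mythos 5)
Your proposal is correct and follows essentially the same route as the paper: a boundary barrier argument in the spirit of Caffarelli–Kohn–Nirenberg–Spruck and Li, where the tangential derivative $(u-\tilde\varphi)_T$ is controlled by a supersolution built from the $\Gamma$-admissible defining function, a $|z|^2$ term, and — crucially — a quadratic first-derivative correction whose linearization produces a nonnegative form $F^{i\bar j}(\cdot)_i\overline{(\cdot)_j}$ that absorbs the otherwise uncontrolled pure holomorphic-holomorphic second derivatives $u_{ik}$ arising from differentiating the equation along the variable-coefficient tangential field. The paper's sketch (following \cite{Li}) takes the specific correction $(u_{y_n}-\rho_{y_n})^2$ rather than your $\sum_k|(u-\tilde\varphi)_{z_k}|^2$, but this is a cosmetic variant of the same device, and the rest of the maximum-principle scaffolding is identical.
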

\begin{proof}(Sketch)
 The argument is a modification of the one from \cite{Li}.
Fixing a point $p \in \partial\Omega$ and applying a (complex) affine change of coordinates, we can assume that $p$ is the coordinate origin and $\partial\Omega$ is locally given by the graph of $C^4$ function $\varphi(z',y_n)$ and $\Omega$ locally coincides with the epigraph of $\varphi$. We can also assume that
\[\frac{\partial\varphi}{\partial x_k}(0',0)= \frac{\partial\varphi}{\partial y_j}(0',0)=0\] for $k\in 1,\cdots,n-1$ and $j=1,\cdots,n$. Then, the real vector field
\[T=\frac{\partial}{\partial x_k}+(\varphi_{x_k})(z',y_n)\frac{\partial}{\partial x_n}\] is tangential to $\partial\Omega$.

Hence, taking the barrier
\[ v(z)=T(u-h_\varphi-a\rho)(z)+\big(u_{y_n}(z)-\rho_{y_n}(z)\big)^2+a\big(\rho(z)-u(z)\big)-c||z||^2,\]
with $\rho$ as in (\ref{syli}), for the suitably chosen $b,c>0$ one has $v(z)\leq 0$ for $z\in\Omega$ near zero. This yields one sided bound on $\frac{\partial^2u}{\partial x_kx_n}(0)$. Applying the same barrier with $T$ replaced by $-T$ gives control in the other direction.

In order to bound $\frac{\partial^2u}{\partial y_jx_n}(0)$ one has to replace $T$ by $\pm [\frac{\partial}{\partial y_j}+(\varphi_{y_j})(z',y_n)\frac{\partial}{\partial x_n}]$.

\end{proof}

Henceforth, we shall assume that all the above quantities are bounded by a generic constant $C$.

\section{Weak interior estimates}\label{wint}

In this section, we state our result for a complex Hessian equation associated with an operator $\mathcal F$ and an associated cone $\Gamma$. We shall follow the real approach of \cite{Iv}.  

For this goal, let in this section $\Omega$ be a strictly $\Gamma$-pseudoconvex domain with $C^4$ boundary and $\psi$ be a defining function for $\Omega$ as in Definition \ref{pseudoconv}. Modifying $\psi$, if necessary, we can assume that $\psi$ is strictly $\Gamma$-admissible, recall the Definition \ref{admis}. 
	
	In $\Om$ we consider a linear elliptic operator
	\[ \mathcal L[u]=\sum_{i,j=1}^n\aij u_{z_i\bar{z}_j}=tr[(\aij)D^2_{\mathbb C}u^{T}]\]
for the coefficients $\aij=\aij(z)$ we shall assume that
\begin{equation}\label{tra}
	\forall_{z \in \overline{\Omega}} \ tr[\aij]=\sum_{j=1}^na^{i\bar{i}}=1,
\end{equation} 
and
\begin{equation}\label{dualcone}
\forall_{z\in\overline{\Omega}}\ \aij\in\tilde{\Gamma}^*,	
\end{equation}
where $\tilde{\Gamma}^*\subset\tilde{\Gamma}_n$ is the dual cone to $\tilde{\Gamma}$ defined by 
$$\tilde{\Gamma}^*=\lbrace A=(\aij)|\ \forall_{B=(b_{ij})\in\tilde{\Gamma}}\ tr(AB)>0\rbrace.$$
	Note that in particular this implies that the largest eigenvalue of $(\aij)$ is at most $1$. The link between the operator and the domain is through the following fundamental assumption
	\begin{equation}\label{apsi}
		\mathcal L[\psi]\leq -2. 
	\end{equation}
Note that since $-\psi$ is strictly $\Gamma$-admissible up to the boundary $\mathcal L[\psi]$ is bounded from above by a negative constant which can always be assumed to be equal to $-2$ by rescaling.

\subsection{Interior gradient estimate}\label{gradint}
We aim for a complex analytic counterpart of the gradient bound in \cite{Iv}. We note that a similar gradient bound was obtained in Theorem 3.1 from \cite{Zh}, due to Zhou, who used probabilistic arguments. Such weakly interior gradient estimates were first obtained by Krylov, see Example 5.3 from \cite{Kr93} - there the bound is obtained for the complex Monge-Amp\`ere equation exclusively but the argument generalizes for general Hessian equations satisfying (\ref{below}). Although the result in this subsection is not strictly needed for the main theorem, we included it to highlight the strength of Krylov's approach. {More importantly, we need most of the computations for the subsequent subsection concerning second order bounds.}
	\begin{theorem}\label{1Zhou}
	Let $\Om$ and $\psi$ be as above. Let the $\Gamma$-admissible function $u$, continuous up to the boundary of $\Omega$ solve the equation
	\begin{equation}
	\begin{cases}\mathcal{F}(u_{z_i\bar{z}_j})=f,\\
		u|_{\partial \Om } = g.
	\end{cases} 	\end{equation}
 Assume that $f, g\in C^1(\overline{\Om})$ and, naturally, $f\geq 0$ in $\Omega$. Then for any $\epsilon>0$ there is a constant $C_\epsilon=C\left(||f||_{C^1}, {||g||_{C^1}}, ||\psi||_{C^3}, n, \epsilon \right)$, but independent of $\inf f$, such that for any unit length vector $\zeta\in \mathbb C^n$ one has
\begin{equation}
|u_{(\zeta)}|\leq \frac{\epsilon}{\psi^{\frac{1}{2}}}  \sup_{\partial \Omega} |u_\nu| + C_\epsilon,\end{equation}
where $\nu$ is the inner normal along $\partial \Omega$.
	\end{theorem}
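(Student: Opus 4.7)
I follow Ivochkina's Bernstein-type scheme \cite{Iv}, adapted to the complex setting via the Krylov maximum principle in Theorem \ref{phomog}. The first step is to linearize: differentiating $\mathcal F(u_{i\bar j})=f$ along the real vector field $\Re(\zeta)$ and commuting partial derivatives yields $\mathcal F^{i\bar j}(u_{(\zeta)})_{i\bar j}=f_{(\zeta)}$. Dividing by the positive trace $\tr(\mathcal F^{i\bar j})$ produces a linear operator $\mathcal L$ with coefficients $\aij:=\mathcal F^{i\bar j}/\tr(\mathcal F^{i\bar j})$ satisfying (\ref{tra}) by construction, (\ref{dualcone}) from the ellipticity in (\ref{below})(i) combined with the definition of the dual cone, and (\ref{apsi}) after rescaling $\psi$ via the strict $\Gamma$-admissibility of $-\psi$.

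\textbf{Auxiliary functions.} On $\Omega^N:=\Omega\times\{1/2<\|\zeta'\|<2\}\subset\mathbb C^n\times\mathbb C^{n+1}$, with $\mathcal L$ extended to act trivially in the $\zeta'=(\zeta_0,\zeta)$ directions, I set
$$w(z,\zeta'):=\psi(z)\bigl(u_{(\zeta)}(z)\bigr)^2-A_\epsilon(z,\zeta'),\qquad v(z,\zeta'):=\psi(z),$$
where $A_\epsilon\geq 0$ is a corrector vanishing on $\partial\Omega\times\{\cdot\}$ to be designed. The Leibniz expansion of $\mathcal L[w]$ produces four contributions: the good negative term $u_{(\zeta)}^2\mathcal L[\psi]\leq -2u_{(\zeta)}^2$ from (\ref{apsi}); a linear piece $2\psi u_{(\zeta)}\mathcal L[u_{(\zeta)}]$ that is absorbable via the linearized equation; the nonnegative quadratic $2\psi\aij(u_{(\zeta)})_i(u_{(\zeta)})_{\bar j}$; and the cross term $4\Re(\aij\psi_i u_{(\zeta)}(u_{(\zeta)})_{\bar j})$. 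A Cauchy--Schwarz split routes the cross term into the positive quadratic plus a small fraction of the good negative term, while $A_\epsilon$ is tuned to soak up the leftover lower-order pieces, yielding $-\mathcal L[w]\leq C_\epsilon\bigl(1+\epsilon^2\sup_{\partial\Omega}|u_\nu|^2\bigr)$ and $-\mathcal L[v]\geq 2$ on $\Omega^N$.

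\textbf{Krylov step.} Theorem \ref{phomog} then gives
$$\frac{w}{v}\leq\max\left\{\sup_{\Omega^N}\frac{-\mathcal L[w]}{-\mathcal L[v]},\ \sup_{\partial\Omega^N}\frac{w}{v}\right\}.$$
The interior ratio is controlled by $C_\epsilon(1+\epsilon^2\sup_{\partial\Omega}|u_\nu|^2)/2$. On the piece $\partial\Omega\times\{\cdot\}$ of $\partial\Omega^N$ the ratio $w/v$ extends by continuity to $u_{(\zeta)}^2(z)$ as $z\to\partial\Omega$, which is bounded by $\sup_{\partial\Omega}|u_\nu|^2$ plus a controlled tangential remainder via the boundary barrier $h_\varphi+a\rho$ from (\ref{syli}); on the slices $\Omega\times\{\|\zeta'\|=1/2,\,2\}$ the ratio is again controlled by the same quantity since $\zeta'$ is bounded. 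Rearranging $w/v\geq u_{(\zeta)}^2-A_\epsilon/\psi$ and extracting a square root yields the claimed bound, with the factor $\psi^{-1/2}$ arising from the inversion of $v=\psi$.

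\textbf{Main obstacle.} The crux is the Cauchy--Schwarz balancing of the positive quadratic $\psi\aij(u_{(\zeta)})_i(u_{(\zeta)})_{\bar j}$ against the good negative $u_{(\zeta)}^2\mathcal L[\psi]$ through the cross term: the two scale differently in $\psi$ and $u_{(\zeta)}$, so the split must be tuned precisely to the prescribed $\epsilon$, with $C_\epsilon\to\infty$ as $\epsilon\to 0^+$. The corrector $A_\epsilon$ has to simultaneously kill the linear-term contribution, preserve the right sign on $\partial\Omega\times\{\cdot\}$, and produce a controlled $\mathcal L[A_\epsilon]$; giving an explicit construction in place of the probabilistic one of \cite{Zh} is the main technical burden. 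A secondary difficulty is the factor $1/\tr(\mathcal F^{i\bar j})$ in the linearized right-hand side, which must be absorbed in an admissibility-preserving way so that the final constant $C_\epsilon$ remains independent of $\inf f$.
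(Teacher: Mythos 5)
The proposal has a genuine gap, and in fact the architecture as stated cannot work.

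The core difficulty is your Cauchy--Schwarz balance. Writing $g=u_{(\zeta)}^2$, the Leibniz expansion gives
\begin{equation*}
\mathcal L[\psi g]=g\,\mathcal L[\psi]+2\psi\,\aij(u_{(\zeta)})_i(u_{(\zeta)})_{\bar j}+2\psi\,u_{(\zeta)}\mathcal L[u_{(\zeta)}]+4\Re\bigl(\aij\psi_i\,u_{(\zeta)}(u_{(\zeta)})_{\bar j}\bigr).
\end{equation*}
To absorb the cross term by the positive quadratic $2\psi\aij(u_{(\zeta)})_i(u_{(\zeta)})_{\bar j}$, Cauchy--Schwarz forces the other half of the split to come out at order $\psi^{-1}\aij\psi_i\psi_{\bar j}\,u_{(\zeta)}^2$. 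Since $\aij\psi_i\psi_{\bar j}$ is bounded below away from zero near $\partial\Omega$ (it controls the normal--normal component), this is $O(\psi^{-1}u_{(\zeta)}^2)$, whereas the only good negative term at your disposal is $u_{(\zeta)}^2\mathcal L[\psi]=O(u_{(\zeta)}^2)$. Near the boundary, where the estimate is actually needed, $\psi^{-1}\to\infty$ and the bad term overwhelms the good one. No choice of a zeroth-order corrector $A_\epsilon$ can fix this: $A_\epsilon$ has to produce a term of the form $\psi^{-1}u_{(\zeta)}^2$ under $\mathcal L$, but a priori you have no control of $u_{(\zeta)}$ in the interior, so such a corrector is circular. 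This is precisely the failure of the naive Bernstein scheme that Krylov's method is designed to sidestep.

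A second, structural, problem: you introduce the product domain $\Omega^N$ but then ``extend $\mathcal L$ to act trivially in the $\zeta'$ directions'' and take $v=\psi$, which does not depend on $\zeta'$ at all. In that case the extra variables are inert and the Krylov quotient principle on $\Omega^N$ reduces to the ordinary maximum principle on $\Omega$; the inner boundary pieces $\Omega\times\{\|\zeta'\|=1/2,2\}$ are then interior points of $\Omega$ where nothing controls $u_{(\zeta)}$, so the boundary supremum in the quotient principle is not under control. The coupling between $z$ and $\zeta'$ is not cosmetic. In the paper's construction (Section~\ref{gradint}) the operator $L$ has off-diagonal blocks built from $r=\alpha\psi_{\bar\zeta}/(\psi+\beta)$ and $q_j=\bigl(\tfrac{\bar\zeta_j}{4}+\alpha\tfrac{\psi_j\psi_{\bar\zeta}}{\psi+\beta}\bigr)/(\psi+\beta)$ (equations (\ref{AIJ})--(\ref{q})), the auxiliary function is \emph{linear} in $u_{(\zeta)}$, namely $w=u_{(\zeta)}+2\Re(\zeta_0)u$, so that $L[w]$ is bounded directly via the linearized equation with no squared-gradient term (Lemma~\ref{perturbation}), and the barrier is the degree-one homogeneous $v=\sqrt{v_1+v_2+\tfrac{\beta}{4}v_3}$ with $v_1=|\psi_\zeta|^2(\psi+\beta)^{-\alpha}$, $v_2=\tfrac1\alpha(\psi+\beta)^{1-\alpha}|\zeta|^2$, $v_3=(\psi+\beta)^{1-\alpha}|\zeta_0|^2$ (Lemma~\ref{barrier}). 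The homogeneity lets one normalize $\|\zeta'\|=1$ on the boundary, and the $(\psi+\beta)^{-\alpha}$ degeneration of $v_1$ is what generates the factor $\psi^{-\alpha/2}\leq\psi^{-1/2}$ in the final bound.

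In short: the linear (not quadratic) auxiliary function, the coupled operator on $\Omega^N$, and the homogeneous power-law barrier are the three ingredients that make the argument close, and all three are absent from the proposal. You correctly identify the balancing act as ``the main technical burden,'' but as written the scheme does not merely leave a gap to be filled -- the specific choice $w=\psi u_{(\zeta)}^2$, $v=\psi$ with a trivially-extended $\mathcal L$ produces an estimate that degenerates in the wrong direction near $\partial\Omega$ and cannot be rescued by any $A_\epsilon$.
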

	
	The equation above may be understood in the viscosity \cite{CIL92} or pluripotential \cite{BT76} sense (whenever pluripotential theory associated to $F$ can be constructed).
	
	Fix $\alpha\in(0,1),\ \beta\in(0,1)$, $\beta<<\alpha$.\footnote{$\al$ will depend on the geometry of $\Omega$ - in particular on the $C^3$ norm of $\psi$.} We consider the operator
	\begin{equation}\label{L}
		L[w]:=\sum_{i,j}A^{i\bar{j}}w_{i\bar{j}}-2\Re \left(\sum_kb_kw_k \right),
	\end{equation}
	 with, {here,} $i, j$ and $k$ running over the $2n+1=n+n+1$ complex variables $z_1,\cdots,z_n,\zeta_1,\cdots,\zeta_n,\zeta_0$. We denote by $\zeta=(\zeta_1,\cdots,\zeta_n)$ and by $\zeta'=(\zeta_1,\cdots,\zeta_n, \zeta_0)$\footnote{The somewhat unusual enumeration follows \cite{Iv}.}. In this section our notation for multiple derivatives simplifies as we use only constant vector fields.

The matrix $A^{i\bar{j}}$ is given symbolically by the formula (compare \cite{Iv}, \cite{ITW}):
\begin{equation}\label{AIJ}
	\left( A^{i\bar{j}} \right):=\begin{pmatrix}
		a^{i\bar{j}} & r a^{i\bar{j}} & a^{i\bar{j}}\overline{q}_j\\
		\overline{r} \overline{a^{j\bar{i}}}&|r|^2 a^{i\bar{j}}&ra^{i\bar{j}}\overline{q}_j\\
		q_i\overline{a^{j\bar{i}}}&\overline{r}q_i\overline{a^{j\bar{i}}}&q_ia^{i\bar{j}}\overline{q}_j
	\end{pmatrix} 
	=\begin{pmatrix}
		A &rA&A\bar{q}\\
		\bar{r}A&|r|^2A&\bar{r}A\bar{q}\\
		q^T A&{r}q^T A&q^T A \bar{q}
	\end{pmatrix},	
\end{equation}
with the following choices made:

\begin{equation}\label{aij}
	a^{i\bar{j}}:=\frac{\frac{\partial \mathcal{F}\left(D_\cc^2u\right)}{\partial u_{i\bar{j}}}}{\sum_{l=1}^n\frac{\partial \mathcal{F}\left(D_\cc^2u\right)}{\partial u_{l\bar{l}}}};
	\end{equation}
\begin{equation}\label{r}
	r(z):=\frac{\alpha\psi_{\overline{\zeta}}}{\psi+\beta};
\end{equation}
\begin{equation}\label{b}
	b_i=\begin{cases}
		0\ {\rm if}\ i\leq n\ {\rm {or}}\ i>2n\\
		a^{(i-n)\overline{j}}\overline{q}_j
	\end{cases};
	\end{equation}
\begin{equation}\label{q}
	q_j:=\frac{1}{\psi+\beta}\large(\frac{\overline{\zeta_j}}{4}+\alpha\frac{\psi_{j}\psi_{\overline{\zeta}}}{\psi+\beta}
	\large).
\end{equation}

We assume that $f$ is uniformly bounded and that $||f||_{C^1}$ is also under control. The first lemma we prove establishes a lower bound for a perturbed first order derivative of $u$. In the following proof, abusing slightly the notation, we shall use $|\zeta|$ for the norm of a vector $\zeta\in \cc^n$, leaving $||\xi||$ for vectors living in $\mathbb C^{n+1}$.

\begin{lemma}\label{perturbation}
	Let $w(z,\zeta')={u_{(\zeta)}}+2\Re(\zeta_0)u(z)$. Then
	$$L[w]\geq -\mu \left(|\zeta|+\beta^{-1}|\zeta|+|\zeta_0|\right),$$
	with a constant $\mu$ depending only on $\|f\|_{C^1}$, $\|\psi\|_{C^1}$ and $\alpha$.
\end{lemma}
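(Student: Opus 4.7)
My plan is to unpack $L[w]$ directly from the definitions and to observe that the ingredients $A^{i\bar j}$, $r$, $q$, $b$ in (\ref{AIJ})--(\ref{q}) have been rigged precisely so that every dangerous first-order-in-$u$ residue cancels. The first step is to record the mixed second derivatives of $w$. Because $w = u_{(\zeta)} + 2\Re(\zeta_0) u$ depends on the auxiliary variables $\zeta$ and $\zeta_0$ only through linear (resp.\ affine) coefficients in front of $u$ and its first derivatives, the pure $\zeta$/$\zeta_0$ second derivatives of $w$ vanish identically; the genuinely mixed ones are $w_{z_i\bar\zeta_j} = w_{\zeta_i\bar z_j} = u_{z_i\bar z_j}$, $w_{z_i\bar\zeta_0} = u_{z_i}$, $w_{\zeta_0\bar z_j} = u_{\bar z_j}$, while the purely spatial one is $w_{z_i\bar z_j} = [u_{(\zeta)}]_{z_i\bar z_j} + 2\Re(\zeta_0) u_{z_i\bar z_j}$.

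Feeding these into the block form of $A^{i\bar j}$, the $z$-$z$ block contributes $\mathcal L[u_{(\zeta)}] + 2\Re(\zeta_0)\mathcal L[u]$; the $z$-$\zeta$ and $\zeta$-$z$ off-diagonal blocks combine into $(r+\bar r)\mathcal L[u] = 2\Re(r)\mathcal L[u]$; the $z$-$\zeta_0$ and $\zeta_0$-$z$ off-diagonal blocks produce $a^{i\bar j}\bar q_j u_{z_i}$ plus its complex conjugate $q_i a^{i\bar j} u_{\bar z_j}$, i.e.\ $2\Re(a^{i\bar j}\bar q_j u_{z_i})$; and every other block contributes zero since the relevant second derivative of $w$ vanishes. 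Thus
$$\sum_{i,j} A^{i\bar j} w_{i\bar j} = \mathcal L[u_{(\zeta)}] + \big[2\Re(\zeta_0) + 2\Re(r)\big] \mathcal L[u] + 2\Re\big(a^{i\bar j}\bar q_j u_{z_i}\big).$$
The drift vector $b$ in (\ref{b}) is tailor-made so that $-2\Re(\sum_k b_k w_k) = -2\Re(a^{i\bar j}\bar q_j u_{z_i})$, which exactly cancels the last (non-$\mathcal L$) summand. What remains is
$$L[w] = \mathcal L[u_{(\zeta)}] + \big[2\Re(\zeta_0) + 2\Re(r)\big]\mathcal L[u].$$

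To close the argument I would invoke Euler's identity for the 1-homogeneous $\mathcal F$, which gives $\mathcal L[u] = f/T$ with $T := \sum_l \partial\mathcal F/\partial u_{l\bar l}$, and differentiate the equation once in direction $\zeta$ to obtain $\mathcal L[u_{(\zeta)}] = f_{(\zeta)}/T$. The bound $|r| \leq \alpha\|\psi\|_{C^1}|\zeta|/\beta$ is immediate from (\ref{r}) together with $\psi \geq 0$, and a universal lower bound on $T$ is available for any $\mathcal F$ satisfying (\ref{below}) (for the Monge-Amp\`ere operator AM-GM gives the clean $T \geq 1$). Combining $f \geq 0$ and $|f_{(\zeta)}| \leq 2\|f\|_{C^1}|\zeta|$ with these two ingredients yields $L[w] \geq -\mu(|\zeta| + \beta^{-1}|\zeta| + |\zeta_0|)$ with $\mu$ depending only on $\|f\|_{C^1}$, $\|\psi\|_{C^1}$, and $\alpha$. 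The only real obstacle is the bookkeeping of the block decomposition: one has to see that the off-diagonal $z$-$\zeta_0$ contribution matches exactly the drift produced by (\ref{b}), and that $|r|$ via (\ref{r}) is what forces the $\beta^{-1}|\zeta|$ factor to appear in the final bound. Once these identifications are made, the remaining algebra is routine.
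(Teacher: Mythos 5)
Your proof is correct and follows the same route as the paper's: compute the block Hessian of $w$, feed it into the block structure of $A^{i\bar j}$, observe that $b$ is rigged to cancel the leftover first-order term, and then use Euler's identity (from $1$-homogeneity) together with the once-differentiated equation to reduce $L[w]$ to $\tfrac{f_{(\zeta)}}{T}+\tfrac{2\Re(r+\zeta_0)f}{T}$, which is bounded below using $f\geq 0$, $|r|\leq\alpha\|\psi\|_{C^1}|\zeta|/\beta$, and the lower bound on $T$. The only cosmetic difference is that you write out the intermediate identity $L[w]=\mathcal L[u_{(\zeta)}]+\big[2\Re(\zeta_0)+2\Re(r)\big]\mathcal L[u]$ before substituting the equation, whereas the paper substitutes in one step.
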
 
\begin{proof}
		By direct computations we obtain
\[w_k=u_{k-n}, \: n<k<2n+1,\]
	\[\begin{pmatrix}
	(w_{i\bar{j}})& (w_{i\overline{j+n}})& (w_{i\overline{2n+1}})\\
		(w_{i+n\bar{j}})&(w_{i+n\overline{j+n}})&(w_{i+n\overline{2n+1}})\\
		(w_{2n+1\bar{j}})&(w_{2n+1\overline{j+n}})&(w_{2n+1\overline{2n+1}})
	\end{pmatrix}=\begin{pmatrix}
		u_{\zeta i \bar{j}}+u_{\bar{\zeta}i\bar{j}}+\zeta_0 u_{i\bar{j}}+\bar{\zeta_0} u_{i\bar{j}} & u_{i\bar{j}}& u_i\\
		u_{i\bar{j}}&0&0\\
		u_{\bar{j}}&0&0
	\end{pmatrix}
	.\]
	Consequently:
	\begin{align*}
		L[w]=\aij(u_{\zeta i\bar{j}}+u_{\bar{\zeta}i\bar{j}})+2\Re\large((r+\zeta_0)\aij u_{i\bar{j}}\large)\\
		+2\Re(\aij\overline{q}_ju_i)-2\Re(b_ju_j).
	\end{align*}
	The terms on the last line cancel each other from the very definition of $b_j$ . The first line becomes
	\begin{equation}\label{Lw}
	L[w]=\frac{2\Re(f_{\zeta})}{\sum_{l=1}^n\frac{\partial \mathcal{F}\left(D_\cc^2u\right)}{\partial u_{l\bar{l}}}}+\frac{2\Re(r+\zeta_0)f}{\sum_{l=1}^n\frac{\partial \mathcal{F}\left(D_\cc^2u\right)}{\partial u_{l\bar{l}}}}\geq -\mu \left(|\zeta|+\beta^{-1}|\zeta|+|\zeta_0|\right),
	\end{equation}
	where we used the fact that $\sum_{l=1}^n\frac{\partial \mathcal{F}\left(D_\cc^2u\right)}{\partial u_{l\bar{l}}}$ is bounded from below. This finishes the proof of the lemma.
\end{proof}

Recall that we make the standing assumption that
\begin{equation}\label{psi}
	\aij\psi_{i\bar{j}}\leq -2.
\end{equation}
For a barrier we shall use morally the same function as in \cite{Iv}. Take 
$$v_1:=\frac{|\psi_\zeta|^2}{(\psi+\beta)^{\alpha}},\ v_2:=\frac1{\alpha}(\psi+\beta)^{1-\alpha}|\zeta|^2,\ v_3:=(\psi+\beta)^{1-\alpha}|\zeta_0|^2.$$

The barrier function $v$ will be 
\begin{equation} \label{barr} v:= \sqrt{v_1+v_2+\frac\beta4 v_3}. \end{equation} 
Note that, just as in \cite{Iv}, we have
\begin{equation}\label{Lv}
	L[v]=\frac1{2v}L[v_1+v_2+\frac\beta4 v_3]-A^{i\bar{j}}\frac{[v_1+v_2+\frac{\beta}{4} v_3]_i[v_1+v_2+\frac\beta4 v_3]_{\bar{j}}}{4v^3}
\end{equation}
\[\leq \frac1{2v}L[v_1+v_2+\frac\beta4 v_3].\]

\begin{lemma}\label{barrier}
We have for some constant $c(n)$, dependent only on $n$, that:
$$L\left[v_1+v_2+\frac\beta4 v_3\right]\leq -\frac{1}{c(n)(\psi+\beta)^{\al}}\left[\frac1\al|\zeta|^2+\al\frac{|\psi_\zeta|^2}{\psi+\beta}+\beta|\zeta_0|^2\right].$$	
\end{lemma}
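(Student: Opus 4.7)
The plan is to compute $L[v_k]$ for $k=1,2,3$ separately, using the block structure of the matrix $A^{i\bar j}$, and then combine the three estimates weighted as in the barrier (\ref{barr}). The guiding structural observation is that $(A^{IJ})$ from (\ref{AIJ}) factorizes as $A^{IJ} = a^{i\bar j}\,N^I_i\,\overline{N^J_j}$, where the matrix $(N^I_i)$ assembles the complex vector fields $X_i := \partial_{z_i} + r\,\partial_{\zeta_i} + q_i\,\partial_{\zeta_0}$. Consequently, modulo correction terms coming from $X_i$ acting on the coefficients $r$ and $q_i$, the second-order part of $L[w]$ behaves like $a^{i\bar j}X_i\overline{X_j}w$, and the drift $b_k$ in (\ref{b}) is calibrated precisely so that the first such correction term is cancelled. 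The assumption (\ref{apsi}), namely $a^{i\bar j}\psi_{i\bar j}\leq -2$, is the engine producing the negative contributions; the dual-cone condition (\ref{dualcone}) ensures that $a^{i\bar j}\psi_i\psi_{\bar j}\geq 0$ and that the total trace is bounded.

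The computations of $L[v_2]$ and $L[\tfrac\beta4 v_3]$ are parallel and straightforward. For $v_2$, the principal term $a^{i\bar j}(v_2)_{z_i\bar z_j}$ equals $\alpha^{-1}(1-\alpha)|\zeta|^2(\psi+\beta)^{-\alpha}\bigl[a^{i\bar j}\psi_{i\bar j}-\alpha(\psi+\beta)^{-1}a^{i\bar j}\psi_i\psi_{\bar j}\bigr]$, which by (\ref{apsi}) and nonnegativity of the second summand is bounded above by $-2\alpha^{-1}(1-\alpha)|\zeta|^2(\psi+\beta)^{-\alpha}$; this is exactly the $-|\zeta|^2/[c\alpha(\psi+\beta)^\alpha]$ contribution claimed. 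The cross-block terms involving $r$, $|r|^2$, and the drift are absorbed into a fraction of the principal term via Cauchy--Schwarz and the explicit form $r = \alpha\psi_{\bar\zeta}/(\psi+\beta)$. For $\tfrac\beta4 v_3$, the same argument produces the $-\beta|\zeta_0|^2/[c(\psi+\beta)^\alpha]$ contribution; the positive remainder $\tfrac\beta4\,q_i\bar q_j\,a^{i\bar j}(\psi+\beta)^{1-\alpha}$ coming from the $\zeta_0\bar\zeta_0$ corner of $A$ is absorbed by $L[v_1]+L[v_2]$ using the explicit form of $q_j$ and the smallness $\beta\ll\alpha$.

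The delicate step is $L[v_1]$. Here the choice $r = \alpha\psi_{\bar\zeta}/(\psi+\beta)$ is made precisely so that the second-order combination $a^{i\bar j}\bigl[(v_1)_{z_i\bar z_j}+r(v_1)_{z_i\bar\zeta_j}+\bar r(v_1)_{\zeta_i\bar z_j}+|r|^2(v_1)_{\zeta_i\bar\zeta_j}\bigr]$ reorganizes into a nonpositive perfect-square piece plus a multiple of $a^{i\bar j}\psi_{i\bar j}\,|\psi_\zeta|^2/(\psi+\beta)^{\alpha+1}$; invoking (\ref{apsi}) the latter supplies the required $-\alpha|\psi_\zeta|^2/(\psi+\beta)^{\alpha+1}$ contribution. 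The main obstacle is correctly identifying which cross terms enter the perfect square: a quadratic-in-$\psi_\zeta$ residue produced by $\partial_{z_i}r$ hitting inside the differentiation (the ``novel term'' alluded to in the Introduction) must be paired against $-2\Re(b_k(v_1)_k)$, and the definition of $b_k$ via $q_j$ is chosen exactly for this pairing. Once these cancellations are made explicit, summing the three inequalities and exploiting $\beta\ll\alpha$ yields the claimed bound with a dimensional constant $c(n)$ arising from a finite number of Cauchy--Schwarz applications.
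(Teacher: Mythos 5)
Your overall plan — compute $L[v_1],L[v_2],L[\tfrac\beta4 v_3]$ separately, use $(\ref{apsi})$ as the engine producing negative principal contributions, and absorb cross terms by Cauchy--Schwarz — is the same as the paper's, and the factorization $A^{I\bar J}=a^{i\bar j}N^I_i\overline{N^J_j}$ is a genuinely nice structural observation that the paper does not state (though note that with the convention $L[w]=A^{i\bar j}w_{i\bar j}$ of $(\ref{L})$ you want $X_i=\partial_{z_i}+\bar r\,\partial_{\zeta_i}+q_i\,\partial_{\zeta_0}$, with $\bar r$ rather than $r$; the distinction matters since $r=\alpha\psi_{\bar\zeta}/(\psi+\beta)$ is complex-valued).

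However, your account of where the ``novel term'' goes is not correct, and if you carried out the computation guided by that intuition the estimate would not close. The drift $-2\Re\bigl(b_k(v_1)_k\bigr)$, after substituting $b_k=a^{(k-n)\bar j}\bar q_j$ and the explicit $q_j$, produces only the two benign pieces $-\tfrac12\Re\bigl[\tfrac{a^{i\bar j}}{(\psi+\beta)^{\alpha+1}}\zeta_j\psi_i\psi_{\bar\zeta}\bigr]$ and $-2\alpha\tfrac{a^{i\bar j}}{(\psi+\beta)^{\alpha+2}}\psi_i\psi_{\bar j}|\psi_\zeta|^2$, i.e.\ what the paper calls $D_1$ and part of $C_1$; it is consolidated with the off-diagonal blocks in the $E+F+G$ reduction, and $D_1$ is later paired against its twin $III_1$ from $L[v_2]$. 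The term that actually needs care — the residue $-\alpha\tfrac{a^{i\bar j}}{(\psi+\beta)^{\alpha+1}}\bigl[\psi_{\bar j}\psi_{\bar\zeta i}\psi_\zeta+\psi_i\psi_{\zeta\bar j}\psi_{\bar\zeta}\bigr]$ surviving inside $A_1$ — comes from the pure $z$-second derivatives of $v_1$ hitting the weight $(\psi+\beta)^{-\alpha}$, not from any correction involving $\partial_{z_i}r$, and it cannot be cancelled algebraically against the drift. In the paper it is absorbed in Step 3 by a weighted Cauchy--Schwarz that spends fractions of \emph{two} independent negative quantities: a piece of $a^{i\bar j}\psi_i\psi_{\bar j}|\zeta|^2$ (from $II_1$ in $L[v_2]$) and a piece of $|\psi_\zeta|^2/(\psi+\beta)^{\alpha+1}$ (from $B_1$, itself produced by $(\ref{apsi})$). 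This is precisely why $\alpha$ must be chosen small compared to $\Psi_2=\|\psi\|_{C^2}$: the coefficient $\alpha^2/(1-3\alpha)$ in front of the $\Psi_2^2|\psi_\zeta|^2$ byproduct has to be beaten by the available $\tfrac\alpha4$ from $B_1$. Without isolating this mechanism, the sign of $L[v_1+v_2]$ remains unaccounted for.

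Two smaller remarks. First, $a^{i\bar j}\psi_i\psi_{\bar j}\ge 0$ already follows from $(a^{i\bar j})$ being Hermitian semi-positive; the dual-cone condition $(\ref{dualcone})$ is used for the nondegenerate pairing $(\ref{apsi})$, not for this positivity. Second, the absorption of the $\tfrac\beta4 q_i a^{i\bar j}\bar q_j(\psi+\beta)^{1-\alpha}$ corner is carried by the explicit factor $\tfrac14$ in front of $v_3$ and the inequality $\beta/(\psi+\beta)\le1$, not by $\beta\ll\alpha$ directly; after this the surviving positive piece $\tfrac\alpha2\tfrac{a^{i\bar j}}{(\psi+\beta)^{\alpha+2}}\psi_i\psi_{\bar j}|\psi_\zeta|^2$ is exactly cancelled by the $C_1$-type term in $(\ref{Lv12})$, so the bookkeeping is tight and you should verify it rather than wave at smallness.
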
 
\begin{proof} It suffices to estimate $L$ on each of the $v_i$'s.

The computation will be separated into several steps.

{\bf Step 1. Computation of $L[v_1]$}

By direct computation we have
\begin{align*}
L[v_1]=\frac{\aij}{(\psi+\beta)^{\alpha}}[\psi_{\zeta i\bar{j}}\psi_{\bar{\zeta}}+\psi_{\bar{\zeta}i\bar{j}}\psi_{\zeta}+\psi_{\zeta i}\psi_{\bar{\zeta}\bar{j}}+\psi_{\zeta\bar{j}}\psi_{\bar{\zeta}j}]\\
-\al\frac{\aij}{(\psi+\beta)^{\alpha+1}}[\psi_{\bar{j}}(\psi_{\zeta i}\psi_{\bar{\zeta}}+\psi_{\bar{\zeta}i}\psi_{\zeta})+\psi_{i}(\psi_{\zeta \bar{j}}\psi_{\bar{\zeta}}+\psi_{\bar{\zeta}\bar{j}}\psi_{\zeta})]\\
	-\al\frac{\aij}{(\psi+\beta)^{\alpha+1}}|\psi_{\zeta}|^2\psi_{i\bar{j}}+\al(\al+1)\frac{\aij}{(\psi+\beta)^{\alpha+2}}\psi_i\psi_{\bar{j}}|\psi_{\zeta}|^2\\
	+2\Re[r\frac{\aij}{(\psi+\beta)^{\alpha}}(\psi_{i\bar{j}}\psi_{\zeta}+\psi_{\bar{j}}\psi_{\zeta i}-\al\frac{\psi_{\bar{j}}\psi_{\zeta}\psi_i}{\psi+\beta})]\\
	+|r|^2\frac{\aij}{(\psi+\beta)^{\alpha}}\psi_i\psi_{\bar{j}}-2\Re[\frac{\aij}{(\psi+\beta)^{\alpha}}\overline{q}_{j}\psi_i\psi_{\bar{\zeta}} ]\\
	=:A+B+C+D+E+F+G.
\end{align*}
Formulas (\ref{r}) and (\ref{q}) for $r$ and $q_j$ imply that 
\begin{align*}
E+F+G=2\Re[\frac{\aij}{(\psi+\beta)^{\alpha}}(\frac{\al|\psi_{\zeta}|^2\psi_{i\bar{j}}+
\al\psi_{\bar{j}}\psi_{\zeta i}\psi_{\bar{\zeta}}}{\psi+\beta}-\al^2\frac{|\psi_{\zeta}|^2\psi_i\psi_{\bar{j}}}{(\psi+\beta)^2})]\\
+\al^2\frac{a^{i\bar{j}}|\psi_{\zeta}|^2\psi_i\psi_{\bar{j}}}{(\psi+\beta)^{2+\alpha}}-\frac12\Re[\frac{\aij}{(\psi+\beta)^{\alpha+1}}(\zeta_j\psi_i\psi_{\bar{\zeta}})]-2\al\Re[\frac{\aij}{(\psi+\beta)^{\alpha+2}}|\psi_{\zeta}|^2\psi_i\psi_{\bar{j}}].
\end{align*}
Coupling the above with $A+B+C+D$ and simplifying one obtains

\begin{align*}
L[v_1]=\frac{\aij}{(\psi+\beta)^{\alpha}}[\psi_{\zeta i\bar{j}}\psi_{\bar{\zeta}}+\psi_{\bar{\zeta}i\bar{j}}\psi_{\zeta}+\psi_{\zeta i}\psi_{\bar{\zeta}\bar{j}}+\psi_{\zeta\bar{j}}\psi_{\bar{\zeta}j}]\\
-\al\frac{\aij}{(\psi+\beta)^{\alpha+1}}[\psi_{\bar{j}}\psi_{\bar{\zeta}i}\psi_{\zeta}+\psi_{i}\psi_{\zeta \bar{j}}\psi_{\bar{\zeta}}]
+\al\frac{\aij}{(\psi+\beta)^{\alpha+1}}|\psi_{\zeta}|^2\psi_{i\bar{j}}\\
-\al\frac{\aij}{(\psi+\beta)^{\alpha+2}}|\psi_{\zeta}|^2\psi_i\psi_{\bar{j}}-\frac12\Re[\frac{\aij}{(\psi+\beta)^{\alpha+1}}(\zeta_j\psi_i\psi_{\bar{\zeta}})].
	\end{align*}

Below we follow \cite{Iv} and by $\Psi_j$, $j=1,2,3$ we denote the semi-norms $||\psi||_j$, respectively. Using additionally (\ref{psi}) we can further estimate

\begin{align*}\begin{gathered}
	L[v_1]\leq \frac{\aij}{(\psi+\beta)^{\alpha}}\left(2\Psi_3\Psi_1+2\Psi_2^2
    {-\frac{\al}{(\psi+\beta)} \left[\psi_{\bar{j}}\psi_{\frac{\bar{\zeta}}{|\zeta|}i}\psi_{\frac{\zeta}{|\zeta|}}+\psi_{i}\psi_{\frac{\zeta}{|\zeta|} \bar{j}}\psi_{\frac{\bar{\zeta}}{|\zeta|}} \right]}
    \right)|\zeta|^2\\-2\al\frac{1}{(\psi+\beta)^{\alpha+1}}|\psi_{\zeta}|^2 
	-\al\frac{\aij}{(\psi+\beta)^{\alpha+2}}|\psi_{\zeta}|^2\psi_i\psi_{\bar{j}}-\frac12\Re[\frac{\aij}{(\psi+\beta)^{\alpha+1}}(\zeta_j\psi_i\psi_{\bar{\zeta}})]\\=A_1+B_1+C_1+D_1.\end{gathered}
	\end{align*}

{\bf Step 2. Computation of $L[v_2]$}

We have
\begin{align*}
	L[v_2]=\frac{\aij}{(\psi+\beta)^{\alpha}}\left[\frac{1-\al}{\al}\psi_{i\bar{j}}|\zeta|^2-(1-\al)\frac{\psi_i\psi_{\bar{j}}|\zeta|^2}{\psi+\beta}\right]\\
	+2\frac{(1-\al)}{\al}\Re\left[r\frac{\aij}{(\psi+\beta)^{\alpha}}\psi_i\zeta_j\right]+\frac{|r|^2}{\al}\frac{\aij}{(\psi+\beta)^{\alpha-1}}\delta_{ij}\\-\frac2{\al}\Re\left[\frac{\aij}{(\psi+\beta)^{\alpha-1}}\overline{q}_j\bar{\zeta}_i\right]=
	I+II+III+IV.
\end{align*}
Next we exploit the formulas for $r$ and $q$. The sum $II+III+IV$ becomes
\begin{equation*}
	-2\al\Re\left[\frac{\aij}{(\psi+\beta)^{\alpha+1}}(\zeta_j\psi_i\psi_{\bar{\zeta}})\right]+\al\frac{\aij}{(\psi+\beta)^{\alpha+1}}|\psi_{\zeta}|^2\delta_{ij}
	-\frac{1}{2\al}\frac{\aij}{(\psi+\beta)^{\alpha}}\zeta_j\bar{\zeta}_i,
	\end{equation*}
and hence
\begin{align*}
	L[v_2]\leq -2\frac{1-\al}{\al}\frac{1}{(\psi+\beta)^{\al}}|\zeta|^2-(1-\al)\frac{\aij}{(\psi+\beta)^{\alpha+1}}\psi_i\psi_{\bar{j}}|\zeta|^2\\
	-2\al\Re\left[\frac{\aij}{(\psi+\beta)^{\alpha+1}}(\zeta_j\psi_i\psi_{\bar{\zeta}})\right]+\al\frac{1}{(\psi+\beta)^{\alpha+1}}|\psi_{\zeta}|^2
	-\frac{1}{2\al}\frac{\aij}{(\psi+\beta)^{\alpha}}\zeta_j\bar{\zeta}_i\\
	=I_1+II_1+III_1+IV_1+V_1
\end{align*}
{\bf Step 3. Estimation of $L[v_1+v_2]$}
Before we proceed, the following observations are due. Note that for $\alpha$ sufficiently close to zero (dependent on $\Psi_j$, $j=1,2,3$) the first two summands of $A_1$ are absorbed by {half} of $I_1$. Next, half of $B_1$ is canceled with $IV_1$. Finally, $D_1$ agrees with $III_1$ up to a multiplicative constant. Hence, for $\al$ small enough:
\begin{align*}
	L[v_1+v_2]\leq -2\frac{1-\al}{2\al}\frac{1}{(\psi+\beta)^{\al}}|\zeta|^2-{(1-\al)}\frac{\aij}{(\psi+\beta)^{\alpha+1}}\psi_i\psi_{\bar{j}}|\zeta|^2
    \\ {-\frac{2\al}{(\psi+\beta)^{\al+1}} \Re\left[\aij(\psi_{\bar{j}}|\zeta|)(\psi_{\frac{\bar{\zeta}}{|\zeta|}i}\psi_{\zeta})\right]}
    -\al\frac{1}{(\psi+\beta)^{\alpha+1}}|\psi_{\zeta}|^2\\
	-\al\frac{\aij}{(\psi+\beta)^{\alpha+2}}|\psi_{\zeta}|^2\psi_i\psi_{\bar{j}}-(\frac12+2\al)\Re[\frac{\aij}{(\psi+\beta)^{\alpha+1}}(\zeta_j\psi_i\psi_{\bar{\zeta}})]
	-\frac{1}{2\al}\frac{\aij}{(\psi+\beta)^{\alpha}}\zeta_j\bar{\zeta}_i.
\end{align*}
Note that, provided $\al$ is small enough and since the standard hermitian form dominates $(a^{i\bar{j}})$:
\[ \begin{gathered} 
-{2\al}\frac{\aij}{(\psi+\beta)^{\alpha+1}}\psi_i\psi_{\bar{j}}|\zeta|^2-\frac\al2\frac{1}{(\psi+\beta)^{\alpha+1}}|\psi_{\zeta}|^2-2\al\Re[\frac{\aij}{(\psi+\beta)^{\alpha+1}}(\zeta_j\psi_i\psi_{\bar{\zeta}})]\\ \leq 
 -2 \al \frac{\aij}{(\psi+\beta)^{\alpha+1}}\psi_i\psi_{\bar{j}}|\zeta|^2-\frac\al2\frac{1}{(\psi+\beta)^{\alpha+1}} \left|\frac{\psi_{\zeta}}{|\zeta|} \cdot \bar{\zeta} \right|^2-2\al\Re[\frac{\aij}{(\psi+\beta)^{\alpha+1}}(\zeta_j\psi_i\psi_{\bar{\zeta}})]\\
\leq 
-2 \al \frac{\aij}{(\psi+\beta)^{\alpha+1}}\psi_i\psi_{\bar{j}}|\zeta|^2
-\frac\al2\frac{a^{i\bar{j}}}{(\psi+\beta)^{\alpha+1}}\frac{\psi_{\zeta}}{|\zeta|} \bar{\zeta_i}\frac{\psi_{\bar{\zeta}}}{|\zeta|} \zeta_j -2\al\Re[\frac{\aij}{(\psi+\beta)^{\alpha+1}}(\zeta_j\psi_i\psi_{\bar{\zeta}})]\\
\leq 0.
\end{gathered} \]
{We remark that if $|\zeta|=0$ the above inequality still holds. Moreover:}
$$-\frac{\al}2\frac{\aij}{(\psi+\beta)^{\alpha+2}}|\psi_{\zeta}|^2\psi_i\psi_{\bar{j}}-\frac12\Re[\frac{\aij}{(\psi+\beta)^{\alpha+1}}(\zeta_j\psi_i\psi_{\bar{\zeta}})]
-\frac{1}{4\al}\frac{\aij}{(\psi+\beta)^{\alpha}}\zeta_j\bar{\zeta}_i\leq 0.$$

Finally
{\begin{align*}
    \begin{gathered}
        -\frac{2\al}{(\psi+\beta)^{1+\al}} \Re\left[\aij(\psi_{\bar{j}}|\zeta|)(\psi_{\frac{\bar{\zeta}}{|\zeta|}i}\psi_{\zeta})\right] \\ \leq \frac{1-3\al}{(\psi+\beta)^{1+\al}}\aij \psi_i\psi_{\bar{j}}|\zeta|^2+ \frac{\al^2}{(1-3\al)(\psi+\beta)^{1+\al}}\aij |\psi_\zeta|^2\psi_{\frac{\bar{\zeta}}{|\zeta|}i}\psi_{\frac{\zeta}{|\zeta|}\bar{j}}
        \\ \leq \frac{1-3\al}{(\psi+\beta)^{1+\al}}\aij \psi_i\psi_{\bar{j}}|\zeta|^2+ 
        \frac{\al^2}{(1-3\al)(\psi+\beta)^{1+\al}} |\psi_\zeta|^2 \Psi_2^2
        \\ \leq \frac{1-3\al}{(\psi+\beta)^{1+\al}}\aij \psi_i\psi_{\bar{j}}|\zeta|^2+ 
        \frac{\al}{4(\psi+\beta)^{1+\al}} |\psi_\zeta|^2,
    \end{gathered}
\end{align*}}
{where the last inequality holds for $\al$ sufficiently small in comparison to $\Psi_2$ while the penultimate one follows from the bound on the maximal eigenvalue of $(\aij)$.}

{Utilizing the last three inequalities} we obtain:
\begin{equation} \begin{split}\label{Lv12}
	L[v_1+v_2]\leq -{\frac{1-\al}{\al}}\frac{1}{(\psi+\beta)^{\al}}|\zeta|^2
	-{\frac{\al}4}\frac{1}{(\psi+\beta)^{\alpha+1}}|\psi_{\zeta}|^2 
	\\
	-\frac{\al}2\frac{\aij}{(\psi+\beta)^{\alpha+2}}|\psi_{\zeta}|^2\psi_i\psi_{\bar{j}}
-\frac{1}{4\al}\frac{\aij}{(\psi+\beta)^{\alpha}}\zeta_j\bar{\zeta}_i.\end{split}
\end{equation}
	{\bf Step 4. Computation of $L[v_3]$}
	
	By brute-force calculation one has
	\begin{align*}
		L[v_3]=(1-\al)\frac{\aij}{(\psi+\beta)^{\alpha}}\psi_{i\bar{j}}|\zeta_0|^2-\al(1-\al)\frac{\aij}{(\psi+\beta)^{\alpha+1}}\psi_i\psi_{\bar{j}} |\zeta_0|^2\\
		+2(1-\al)\Re\left[\frac{\aij}{(\psi+\beta)^{\alpha}}\overline{q}_j\psi_i\zeta_0\right]+q_i\frac{\aij}{(\psi+\beta)^{\alpha-1}}\overline{q}_j.
	\end{align*}
	Once again the definition of $q_i$, together with assumption (\ref{psi}) lead to
	\begin{align*}
	L[v_3]\leq -2\frac{|\zeta_0|^2}{(\psi+\beta)^{\alpha}}-\al(1-\al)\frac{\aij}{(\psi+\beta)^{\alpha+1}}\psi_i\psi_{\bar{j}} |\zeta_0|^2\\
	+\frac{(1-\al)}{2}\Re\left[\frac{\aij}{(\psi+\beta)^{\alpha+1}}\psi_i\zeta_j\zeta_0\right]+2(1-\al)\al\Re\left[\frac{\aij}{(\psi+\beta)^{\alpha+2}}\psi_i\psi_{\bar{j}}\psi_\zeta\zeta_0\right]\\
	+\frac1{16}\frac{\aij}{(\psi+\beta)^{\alpha+1}}\overline{\zeta_i}\zeta_j+\frac{\al}2\Re\left[\frac{\aij}{(\psi+\beta)^{\alpha+2}}\overline{\zeta_i}\psi_{\bar{j}}\psi_\zeta\right]\\
	+\al^2\frac{\aij}{(\psi+\beta)^{\alpha+3}}|\psi_{\zeta}|^2\psi_i\psi_{\bar{j}}=(1)+(2)+\cdots+(7).
	\end{align*} 
	Observe now that
	\begin{equation}\label{3}
	(2)+(3)\leq -\frac{(1-\al)\al}{2}\frac{\aij}{(\psi+\beta)^{\alpha+1}}\psi_i\psi_{\bar{j}}|\zeta_0|^2+\frac{(1-\al)}{8\al}\frac{\aij}{(\psi+\beta)^{\alpha+1}}\overline{\zeta_i}\zeta_j,
	\end{equation}
	 while
	\begin{equation}\label{4}
		(4)\leq \frac{(1-\al)\al}{2}\frac{\aij}{(\psi+\beta)^{\alpha+1}}\psi_i\psi_{\bar{j}}|\zeta_0|^2+\frac{2(1-\al)\al^2}{\al}\frac{\aij}{(\psi+\beta)^{\alpha+3}}\psi_i\psi_{\bar{j}}|\psi_\zeta|^2.		
	\end{equation}
	Finally
	\begin{equation}\label{6}
	(6)\leq \frac1{16}\frac{\aij}{(\psi+\beta)^{\alpha+1}}\overline{\zeta_i}\zeta_j+\al^2\frac{\aij}{(\psi+\beta)^{\alpha+3}}\psi_i\psi_{\bar{j}}|\psi_\zeta|^2
	\end{equation}
	$$=(5)+(7).$$
	
	Coupling now the estimation of $L[v_3]$ with (\ref{3}), (\ref{4}) and (\ref{6}) we obtain
	
	\begin{equation}\label{v3semifinal}
L[v_3]\leq -2\frac{|\zeta_0|^2}{(\psi+\beta)^{\alpha}}+\frac1{8\al}\frac{\aij}{(\psi+\beta)^{\alpha+1}}\overline{\zeta_i}\zeta_j+2\al\frac{\aij}{(\psi+\beta)^{\alpha+3}}\psi_i\psi_{\bar{j}}|\psi_\zeta|^2.
	\end{equation}
	
	Plugging $\frac{\beta}{(\psi+\beta)}\leq1$ into (\ref{v3semifinal}) we finally finish with
	\begin{equation}\label{v3final}
	L\left[\frac{\beta}4v_3\right]\leq -2\frac{\beta}4\frac{|\zeta_0|^2}{(\psi+\beta)^{\alpha}}+\frac1{32\al}\frac{\aij}{(\psi+\beta)^{\alpha}}\overline{\zeta_i}\zeta_j+\frac\al2\frac{\aij}{(\psi+\beta)^{\alpha+2}}\psi_i\psi_{\bar{j}}|\psi_\zeta|^2.
	\end{equation}
	{\bf Step 5. Estimation of $L\left[v_1+v_2+\frac{\beta}4v_3\right]$}
	
	Coupling (\ref{Lv12}) with (\ref{v3final}) we obtain
	
	\begin{equation}\label{123}\begin{gathered}
	L[v_1+v_2+\frac{\beta}4v_3]\\
    \leq -{\frac{1-\al}{\al}}\frac{1}{(\psi+\beta)^{\al}}|\zeta|^2-{\frac{\al}4}\frac{1}{(\psi+\beta)^{\alpha+1}}|\psi_{\zeta}|^2-{\frac{\beta}{2}}\frac{|\zeta_0|^2}{(\psi+\beta)^{\alpha}}.\end{gathered}
	\end{equation}
This finishes the proof of Lemma \ref{barrier}.
	\end{proof}
We now proceed to the proof of the main result of this subsection.
\begin{proof}[Proof of Theorem \ref{1Zhou}]
Using Krylov's maximum principle, see Theorem \ref{phomog}, we know that:
\begin{equation} \begin{gathered} \label{krylovmax}	
	\frac{w}{v}\leq \max \Big\lbrace \sup\limits_{\Omega^N} \frac{L[w]}{L[v]}, \sup\limits_{\partial \Omega^N}\frac wv \Big\rbrace\\
	\leq 	\max \Big\lbrace \sup\limits_{\Omega^N} \frac{-L[w]}{-L[v]}, \sup\limits_{\partial \Omega^N}\frac wv \Big\rbrace.
\end{gathered}\end{equation}
For the first term by Lemma \ref{perturbation} and (\ref{Lv}) we have:
\[\frac{-L[w]}{-L[v]} \leq 2 \cdot \frac{\mu \left(|\zeta|+\beta^{-1}|\zeta|+|\zeta_0|\right)}{v} \cdot \frac{v_1+v_2+\frac\beta4 v_3}{-L[v_1+v_2+\frac\beta4 v_3]}.\]
Using Lemma \ref{barrier} and further modifying $\mu$ if necessary, depending on $\alpha$, we can estimate the second fraction by:
\[\frac{-L[w]}{-L[v]} \leq (\psi + \beta) \cdot \frac{\mu \left(|\zeta|+\beta^{-1}|\zeta|+|\zeta_0|\right)}{v}.\]
Referring to the very definition (\ref{barr}) of $v$ we further have
\begin{equation} \label{firstterm} \frac{-L[w]}{-L[v]} \leq \big(\sup\limits_{\overline{\Omega}} \psi + \beta\big)^{\frac{1+\alpha}{2}} \mu \left(\beta^{-\frac 1 2}+\beta^{-\frac 3 2}\right):= C_\beta.\end{equation}
As for the second term in (\ref{krylovmax}), since both $v$ and $w$ are homogeneous in the $\xi'$ variable we assume $z \in \partial \Omega$ and $|(\zeta,\zeta_0)|=1$ are such that
\begin{equation} \label{second} \sup\limits_{\partial \Omega^N}\frac wv = \frac{w}{v}(z,\zeta'). \end{equation}
Let $\nu$ be the unit inner normal vector to $\partial\Omega$ and let 
\begin{equation} \big|(\Re (\zeta), \nu)\big|=\theta.\end{equation}

We have, {due to estimates from Section 3 and properties of $\psi$ from Definition \ref{pseudoconv},} at $(z,\zeta')$:
\begin{equation}
\begin{gathered}
w(z,\zeta') \leq {2}\theta \sup_{\partial \Omega} |u_\nu| + C_\beta,\\
v(z,\zeta') \geq {\frac 1 2}\theta \beta^{\frac{-\alpha}{2}}+ C_\beta.
\end{gathered}
\end{equation}
We remark that in the first inequality the term $C_{\beta}$ absorbs the tangential derivatives of $u$ (which are equal to the corresponding tangential derivatives of the boundary data $g$).

All this results {in a sequence of inequalities:}
\begin{equation}\label{secondterma}
\sup\limits_{\partial \Omega^N}\frac wv {\leq \frac{2\theta \sup_{\partial \Omega} |u_\nu|}{v} + \frac{C_\beta}{v} \leq 4 \beta^{\frac \alpha 2}\sup_{\partial \Omega} |u_\nu| + C_\beta} \leq \epsilon \sup_{\partial \Omega} |u_\nu| + C_\epsilon
\end{equation}
for $\epsilon>0$ as small as we wish at the cost of making $\beta$ sufficiently small and, potentially, $C_\epsilon$ large.
Applying (\ref{firstterm}) and (\ref{secondterma}) in (\ref{krylovmax}) for $\zeta_0=0$ and any norm one $\zeta$ we get for all $z \in \Omega$:
\begin{equation}
u_{(\zeta)}(z) \leq v(z,\zeta) \Big( \epsilon \sup_{\partial \Omega} |u_\nu| + C_\epsilon \Big)\leq \frac{\epsilon}{\psi^{\frac{\alpha}{2}}}  \sup_{\partial \Omega} |u_\nu| + C_\epsilon \leq 
\frac{\epsilon}{\psi^{\frac{1}{2}}}  \sup_{\partial \Omega} |u_\nu| + C_\epsilon, \end{equation}
as $\alpha<1$.
\end{proof}

\subsection{Interior hessian estimate}\label{hessint}
In this section again we use only constant vector field $\zeta \in \mathbb{C}^n$ thus we recall that the first order operator is given by:
\[ \label{crazyoper} u_{(\zeta)}:= u_\zeta + u_{\bar{\zeta}}= \zeta_i u_i + \bar{\zeta_i} u_{\bar{i}}.\]
One can easily see that we also have the formulas:
\[u_{(\zeta)(\zeta)}:= u_{\zeta \zeta} + 2 u_{\zeta \bar{\zeta}}+ u_{\bar{\zeta}\bar{\zeta}}= \zeta_i\zeta_j u_{ij} + 2\zeta_i\bar{\zeta_j} u_{i\bar{j}} + \bar{\zeta_i} \bar{\zeta_j} u_{\bar{i}\bar{j}}.\]
The notation we shall use is as in the previous subsection. This time we prove the following lemma:
\begin{lemma}\label{Lu}
Let $w(z,\zeta')=u_{(\zeta) (\zeta)}+ 4\Re(\zeta_0) u_{(\zeta)}+|2\Re(\zeta_0)|^2u(z)$. Then
	$$L[w]\geq -\mu||\zeta'||^2,$$
	with a constant $\mu$ depending only on $||f||_{C^2}$, $n$ and $\Omega$.
\end{lemma}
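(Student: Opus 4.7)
The plan is to imitate the proof of Lemma~\ref{perturbation} with the quadratic test function $w$ in place of the linear one there. I would compute $L[w]=A^{i\bar j}w_{i\bar j}-2\Re(b_iw_i)$ by direct substitution, using the block form~(\ref{AIJ}) of $A^{i\bar j}$ and the definition~(\ref{b}) of $b_k$, and observe that the specific choice of $b_k$ was rigged precisely so that the troublesome cross terms involving $\bar q_j$ and $q_i$ cancel. The only additional ingredient compared with Lemma~\ref{perturbation} is that after this cancellation the principal term carries $a^{k\bar l}u_{(\zeta)(\zeta)k\bar l}$, which will be controlled by twice-differentiating the equation $\mathcal{F}(u_{i\bar j})=f$ and invoking the concavity of $\mathcal{F}$.

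The computation starts by listing the mixed second partials of $w$ in block notation. Since $u$ depends only on $z$, these are elementary: for instance $w_{z_i\bar z_j}=u_{(\zeta)(\zeta)i\bar j}+2(\zeta_0+\bar\zeta_0)u_{(\zeta)i\bar j}+(\zeta_0+\bar\zeta_0)^2u_{i\bar j}$, $w_{z_i\overline{\zeta_j}}=w_{\zeta_i\bar z_j}=2u_{i\bar j(\zeta)}+2(\zeta_0+\bar\zeta_0)u_{i\bar j}$, $w_{\zeta_i\overline{\zeta_j}}=2u_{i\bar j}$, $w_{z_i\overline{\zeta_0}}=2u_{i(\zeta)}+2(\zeta_0+\bar\zeta_0)u_i$, $w_{\zeta_i\overline{\zeta_0}}=2u_i$, $w_{\zeta_0\overline{\zeta_0}}=2u$, while $w_{\zeta_i}=2(u_{i(\zeta)}+(\zeta_0+\bar\zeta_0)u_i)$ is the only nonvanishing first partial needed for the $b_iw_i$ piece. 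Assembling $A^{i\bar j}w_{i\bar j}$ via~(\ref{AIJ}) and subtracting $2\Re(b_iw_i)$, the same cancellation as in Lemma~\ref{perturbation} eliminates every contribution of the form $a^{i\bar j}\bar q_j\bigl(u_{i(\zeta)}+(\zeta_0+\bar\zeta_0)u_i\bigr)$ coming from the last row/column of $A^{i\bar j}$, leaving
\begin{align*}
L[w]&=a^{k\bar l}\bigl[u_{(\zeta)(\zeta)k\bar l}+2(\zeta_0+\bar\zeta_0)u_{(\zeta)k\bar l}+(\zeta_0+\bar\zeta_0)^2u_{k\bar l}\bigr]\\
&\quad+4\Re(r)\,a^{k\bar l}\bigl[u_{k\bar l(\zeta)}+(\zeta_0+\bar\zeta_0)u_{k\bar l}\bigr]+2|r|^2a^{k\bar l}u_{k\bar l}\\
&\quad+4\Re\bigl[r\,a^{i\bar j}\bar q_j u_i\bigr]+2q_ia^{i\bar j}\bar q_j\,u.
\end{align*}

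To bound the first line I would differentiate $\mathcal{F}(u_{i\bar j})=f$ once in direction $(\zeta)$ to get $\mathcal{F}^{k\bar l}u_{k\bar l(\zeta)}=f_{(\zeta)}$ and twice, using the concavity of $\mathcal{F}$, to obtain $\mathcal{F}^{k\bar l}u_{k\bar l(\zeta)(\zeta)}\geq f_{(\zeta)(\zeta)}$. Dividing by $T:=\sum_l\mathcal{F}^{l\bar l}$, which under~(\ref{below}) is bounded below away from zero on $\Gamma$-admissible data, the first line becomes at least $\bigl(f_{(\zeta)(\zeta)}+2(\zeta_0+\bar\zeta_0)f_{(\zeta)}+(\zeta_0+\bar\zeta_0)^2 f\bigr)/T\geq -C\|f\|_{C^2}||\zeta'||^2$. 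The second line rewrites as $\bigl[4\Re(r)f_{(\zeta)}+(4\Re(r)(\zeta_0+\bar\zeta_0)+2|r|^2)f\bigr]/T$; its $|r|^2f/T$ summand is nonnegative and can be dropped from the lower bound, while the remainder is $O(|r|(|\zeta|+|\zeta_0|))$. The third line is $O(|r||q|+|q|^2)$ by the $C^1$-bound on $u$ from Section~\ref{initial}. Since $|r|,|q|=O(|\zeta|)$ with constants depending on $\Omega$ (with $\alpha,\beta$ treated as fixed at this stage), every residual term is $O(||\zeta'||^2)$ and the announced inequality follows. The main obstacle is the combinatorial bookkeeping---verifying by direct expansion that no uncontrolled higher-order derivative of $u$ survives outside of the $a^{k\bar l}u_{(\zeta)(\zeta)k\bar l}$ combination, which alone is tamed by the twice-differentiated equation. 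Once that is carefully done, concavity together with the a priori bounds of Section~\ref{initial} delivers the conclusion essentially automatically.
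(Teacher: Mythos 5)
Your proposal is correct and follows essentially the same route as the paper's proof: compute the block Hessian of $w$ and the first-order derivatives in the $\zeta$-block, observe that the choice of $b_k$ cancels the $\bar q_j$-contributions involving $u_{(\zeta)i}$ and $u_i$, differentiate the equation once and twice in the $(\zeta)$ direction using concavity to tame $\mathcal F^{i\bar j}u_{(\zeta)(\zeta)i\bar j}$, and absorb the residual $r$- and $q$-terms via the a priori $C^1$ bound on $u$ and the estimates $|r|,|q|=O(|\zeta|)$. Your bookkeeping is if anything slightly cleaner than the paper's (writing $\Re(r)$ explicitly where the paper informally writes $r$, and flagging the $\alpha,\beta$-dependence of the constants), but there is no substantive difference in the argument.
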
 
\begin{proof}
	Let us rewrite:
	\[w(z,\zeta')=(\zeta_i\zeta_j u_{ij}+2\zeta_i \bar{\zeta_j}u_{i\bar{j}}+\bar{\zeta_i}\bar{\zeta_j}u_{\bar{i}\bar{j}})+ 2(\zeta_0+\bar{\zeta_0})u_{(\zeta)}+(\zeta_0+\bar{\zeta_0})^2u.\]
	Then, by a direct computations:
	\[w_{n+k}=2u_{k\bar{\zeta}}+2u_{\zeta k}+4 \Re(\zeta_0) u_{k}=2u_{(\zeta)k}+4 \Re(\zeta_0) u_k, \: 1\leq k\leq n,\]
	\[\begin{gathered}\begin{pmatrix}
			(w_{i\bar{j}})& (w_{i\overline{j+n}})& (w_{i\overline{2n+1}})\\
			(w_{i+n\bar{j}})&(w_{i+n\overline{j+n}})&(w_{i+n\overline{2n+1}})\\
			(w_{2n+1\bar{j}})&(w_{2n+1\overline{j+n}})&(w_{2n+1\overline{2n+1}})
		\end{pmatrix}
	=\begin{pmatrix}
		u_{(\zeta) (\zeta) i \bar{j}} & 2 u_{(\zeta) i\bar{j}} & 0\\
		2u_{(\zeta)i \bar{j}}&2u_{i\bar{j}}&0\\
		0&0&0
	\end{pmatrix}\\
	+\begin{pmatrix}
		4 \Re(\zeta_0) u_{(\zeta) i \bar{j}} & 4\Re(\zeta_0) u_{i\bar{j}}& 2u_{(\zeta) i}\\
		4\Re(\zeta_0) u_{i \bar{j}}&0&2u_i\\
		2 u_{(\zeta) \bar{j}}&2u_{\bar{j}}&0
	\end{pmatrix} 
	+\begin{pmatrix}
		|2\Re(\zeta_0)|^2u_{i\bar{j}}&0&4\Re(\zeta_0)u_i\\
		0&0&0\\
		4\Re(\zeta_0)u_{\bar{j}}&0&2u
	\end{pmatrix}.\end{gathered}\]
	Consequently:
	\begin{align*}
		L[w]=\aij \left(u_{(\zeta) (\zeta) i \bar{j}} +4 \Re(\zeta_0) u_{(\zeta) i \bar{j}} +|2\Re(\zeta_0)|^2u_{i\bar{j}} \right)  \\
		+8r \Re(\zeta_0)\aij u_{i\bar{j}} + 4r \aij u_{(\zeta)i\bar{j}}+2|r|^2 \aij u_{i\bar{j}} + 2uq_i\aij \bar{q_j}+4r\Re(\aij \bar{q_j}u_i) \\
		+4\Re(\aij \overline{q}_j u_{(\zeta) i})+8 \Re(\aij \bar{q_j} u_i) -2\Re(\aij \bar{q_j}(2u_{(\zeta)i}+4 \Re(\zeta_0) u_i)).
	\end{align*}
	The terms on the last line cancel each other from the very definition of $b_j$ . The first two lines can be estimated, using concavity of the operator and bounding the last two terms of the second line in a trivial way, by
	\begin{equation}\label{Lw}\begin{gathered}
	L[w] \geq \frac{f_{(\zeta) ({\zeta})}+\left(|2\Re(\zeta_0)|^2+8\Re(r)\Re(\zeta_0)+2|r|^2\right)f + \left(4\Re(\zeta_0)+4\Re (r)\right) f_{(\zeta)}}{\sum_{l=1}^n\frac{\partial \mathcal F(D_{\cc}^2(u))}{\partial u_{l\bar{l}}}}\\
    -C|\zeta|^2 \geq -\mu||\zeta'||^2,\end{gathered}
	\end{equation}
	where we used the fact that $\sum_{l=1}^n\frac{\partial \mathcal F(D_{\cc}^2(u))}{\partial u_{l\bar{l}}}$ is bounded from below. This proves the claim.
\end{proof}

Using this time the barrier function 
\begin{equation} v = v_1+v_2+\frac\beta4 v_3 \end{equation} 
for $v_1$, $v_2$ and $v_3$ from the previous section we obtain:

\begin{theorem}\label{weakinteriorbound}
Let $\Om$ and $\psi$ be as in the beginning of the section. Let the $\Gamma$-admissible function $u$, continuous up to the boundary of $\Omega$, solve the equation
	\begin{equation}
	\begin{cases}\mathcal F(u_{z_i\bar{z}_j})=f,\\
		u|_{\partial \Om } = g.
	\end{cases} 	\end{equation}
Assume that $f,g\in C^{1,1}(\overline{\Om})$ and $f\geq 0$ in $\Omega$. Then for any $\epsilon>0$ and $\delta>0$ there is a constant $C_{\epsilon,\delta}=C\left(||f||_{C^{1,1}}, ||g||_{C^{1,1}}, ||\psi||_{C^{3,1}}, n, \epsilon, \delta \right)${, but independent of $\inf f$,} such that for any unit length vector $\zeta\in \mathbb C^n$ and $z\in \Omega_\delta=\{z \in \Omega \: | \: dist(z,\partial \Omega) > \delta\}$ one has
\begin{equation}
|u_{(\zeta)(\zeta)}| \leq \epsilon \sup_{\partial \Omega} |u_{\nu \nu}| + C_{\epsilon,\delta},
 \end{equation}
where $\nu$ is the inner normal along $\partial \Omega$.
\end{theorem}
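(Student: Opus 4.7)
The plan is to mimic the argument for Theorem~\ref{1Zhou}, replacing the square-root barrier of the gradient case by the purely quadratic barrier
\[ v(z,\zeta') := v_1+v_2+\tfrac{\beta}{4}v_3, \]
so as to match the quadratic homogeneity in $\zeta'$ of the test function $w(z,\zeta') = u_{(\zeta)(\zeta)}+4\Re(\zeta_0)u_{(\zeta)}+\bigl(2\Re(\zeta_0)\bigr)^2 u$ from Lemma~\ref{Lu}. Since $v_1,v_2,v_3\geq 0$ cannot vanish simultaneously on $\Omega^N$ we have $v>0$ there, and $L[v]<0$ by Lemma~\ref{barrier} for $\alpha,\beta$ small enough, so Krylov's maximum principle (Theorem~\ref{phomog}) gives
\[ \frac{w}{v}(z,\zeta') \;\leq\; \max\Big\{\sup_{\Omega^N}\frac{-L[w]}{-L[v]},\;\sup_{\partial\Omega^N}\frac{w}{v}\Big\}. \]

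The interior ratio is the easy half: Lemma~\ref{Lu} gives $-L[w]\leq \mu\|\zeta'\|^2$, and Lemma~\ref{barrier} combined with the elementary inequality $\tfrac{1}{\alpha}|\zeta|^2+\beta|\zeta_0|^2\geq\beta\|\zeta'\|^2$ yields $-L[v]\geq\tfrac{\beta}{c(n)\alpha(\psi+\beta)^\alpha}\|\zeta'\|^2$, so the ratio is uniformly bounded by some $C_\beta$. The boundary ratio is the heart of the argument and mirrors (\ref{firstterm})--(\ref{secondterma}) upgraded to second order. By homogeneity I reduce to $z\in\partial\Omega$ and $\|\zeta'\|=1$; let $\theta\in[0,1]$ quantify the normal component of $\zeta$ at $z$. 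Decomposing the real vector $2\Re\zeta$ into its real tangential and normal parts, Propositions~\ref{tantan} and~\ref{tannor} bound the tangential-tangential and tangential-normal contributions to $u_{(\zeta)(\zeta)}$ by $C$, while the normal-normal piece is at most $C\theta^2\sup_{\partial\Omega}|u_{\nu\nu}|$; the $\zeta_0$-correction terms in $w$ are absorbed by Propositions~\ref{unif} and~\ref{c1}. Hence $w\leq C+C\theta^2\sup|u_{\nu\nu}|$ on the boundary. On the other side, on $\partial\Omega$ the inequality $|\psi_\zeta|\geq c\theta$ (coming from $|\nabla\psi|\geq 1$) gives $v_1\geq c\theta^2/\beta^\alpha$, while $v_2+\tfrac{\beta}{4}v_3\geq c\beta^{2-\alpha}/\alpha$ independently of $\theta$. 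A case split on whether $\theta^2\geq\beta^2$ or $\theta^2<\beta^2$ produces
\[ \sup_{\partial\Omega^N}\frac{w}{v}\;\leq\; C\beta^\alpha\sup_{\partial\Omega}|u_{\nu\nu}|+C_\beta, \]
so that choosing $\beta$ sufficiently small makes the prefactor arbitrarily small.

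To conclude, specializing to $\zeta_0=0$ and $|\zeta|=1$ identifies $w(z,\zeta,0)=u_{(\zeta)(\zeta)}(z)$; since $\psi\geq c_0\delta$ on $\Omega_\delta$, the quantity $v(z,\zeta,0)$ is uniformly bounded above by some constant $M_{\alpha,\delta}$, and multiplying the previous inequality by it and relabelling $\epsilon$ yields the one-sided upper bound $u_{(\zeta)(\zeta)}(z)\leq\epsilon\sup|u_{\nu\nu}|+C_{\epsilon,\delta}$ on $\Omega_\delta$. The matching lower bound is obtained by applying the upper bound in every direction of a real orthonormal basis $\{T_k\}$ and invoking $\sum_k u_{T_kT_k}=\Delta u=4\sum_l u_{l\bar{l}}\geq 0$ from the plurisubharmonicity of $u$, which converts uniform upper bounds in all directions into a lower bound in any single prescribed direction, at the price of a factor $(2n-1)$ absorbed into $\epsilon$ after rescaling. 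The main obstacle is the boundary estimate: the barrier $v_1$ degenerates precisely as $\theta\to 0$, where $|\psi_\zeta|\to 0$, and only the $\theta$-uniform lower bound furnished by $v_2+\tfrac{\beta}{4}v_3$ keeps the ratio controlled in the nearly-tangential regime; without this splitting of $v$ the gradient-estimate blueprint would collapse at exactly this point, in parallel with the role played by the same terms in the real argument of \cite{Iv} and \cite{ITW}.
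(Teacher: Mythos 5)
Your proposal is correct and takes essentially the same route as the paper: same test function $w$ from Lemma~\ref{Lu}, same barrier $v=v_1+v_2+\tfrac{\beta}{4}v_3$ (unlike the square root used in the gradient case), same application of Krylov's maximum principle, and the same boundary reduction to the angle $\theta$ between $\Re(\zeta)$ and the normal. The paper compresses the boundary-ratio step into the two displayed inequalities $w\leq 4\theta^2\sup|u_{\nu\nu}|+C_\beta$, $v\geq\theta^2\beta^{-\alpha}+C_\beta$ and simply declares the conclusion, whereas you spell out the case split $\theta^2\gtrless\beta^2$ that makes this work; you also make explicit the passage from the one-sided upper bound on $u_{(\zeta)(\zeta)}$ to the two-sided bound via the trace inequality $\sum_k u_{T_kT_k}=\Delta u\geq 0$, which the paper leaves implicit (see the remark in Proposition~\ref{toboun}). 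One small inaccuracy: for a general $\Gamma$-admissible $u$ one only has subharmonicity (from $\Gamma\subset\Gamma_1$), not plurisubharmonicity, but $\Delta u\geq 0$ is all your argument uses.
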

\begin{proof}
Note that, again, due to Krylov's maximum principle it is enough to make the following two estimations. This time
\begin{equation}
L[v]=L[v_1+v_2+\frac\beta4 v_3]
\end{equation}
which coupled with Lemma \ref{barrier} and Lemma \ref{Lu} results in
\begin{equation}\label{firsttermterm}
\frac{-L[w]}{-L[v]} \leq  \frac{\mu \|\zeta'\|^2}{-L[v_1+v_2+\frac\beta4 v_3]} \leq C \cdot \big( \sup_{\overline{\Omega}}\psi+\beta\big)\beta^{-1}:= C_\beta.
\end{equation}
Next, let $z \in \partial \Omega$ and $|(\zeta,\zeta_0)|=1$ be such that: 
\begin{equation} \label{secondtermterm} 
\sup\limits_{\partial \Omega^N}\frac wv = \frac{w}{v}(z,\zeta'). \end{equation}
Let 
\begin{equation} \big|(\Re( \zeta), \nu)\big|=\theta.\end{equation}

We have, due to estimates of Section \ref{initial} and assumptions on $\psi$ on $\partial \Omega$, that at $(z,\zeta')$:
\begin{equation}
\begin{gathered}
w(z,\zeta') \leq 4\theta^2 \sup_{\partial \Omega} |u_{\nu \nu}| + C_\beta,\\
v(z,\zeta') \geq \theta^2 \beta^{-\alpha}+ C_\beta,
\end{gathered}
\end{equation}
where in $C_{\beta}$ we have included the tangential-tangential and tangential normal derivatives of $w$, which are under control.

This results in 
\begin{equation}\label{secondterm}
\sup\limits_{\partial \Omega^N}\frac wv \leq \epsilon \sup_{\partial \Omega} |u_{\nu \nu}| + C_\epsilon
\end{equation}
for $\epsilon>0$ as small as we wish at the cost of making $\beta$ sufficiently small and, potentially, $C_\epsilon$ big.
Applying (\ref{firsttermterm}) and (\ref{secondtermterm}) in (\ref{krylovmax}) for $\zeta_0=0$ and any norm one $\zeta$ we get for all $z \in \Omega$:
\begin{equation} \label{almost}
u_{(\zeta)(\zeta)}(z) \leq v(z,\zeta) \Big( \epsilon \sup_{\partial \Omega} |u_{\nu \nu}| + C_\epsilon \Big)\leq \frac{\epsilon}{\psi^{\alpha}}  \sup_{\partial \Omega} |u_{ \nu \nu}| + C_\epsilon. 
\end{equation}
For any $\delta>0$ and $\Omega_\delta = \{z \in \Omega \: | \: dist(z,\partial \Omega) > \delta\}$, as $\Omega_\delta \subset \subset \Omega$, we choose $\delta'$ such that 
\begin{equation}
\Omega_\delta \subset \{z \in \Omega \: | \: \psi^{\alpha}(z)>\delta'\}.
\end{equation}
Using (\ref{almost}) with $\epsilon':=\delta' \cdot \epsilon$ for any $\epsilon >0$ we obtain
\begin{equation}
u_{(\zeta)(\zeta)}(z) \leq  \epsilon  \sup_{\partial \Omega} |u_{ \nu \nu}| + C_{\epsilon, \delta}
\end{equation}
for any unit length $\zeta$ and $z \in \Omega_\delta$.
\end{proof}

\section{Second order estimate}
\subsection{Affine skew-hermitian vector fields}
The goal of this section is to prove a complex analogue of Lemma 2.1 from \cite{ITW}. First, we need a definition:
\begin{definition}\label{sekwherm}
 A holomorphic vector field $X$ is called affine skew-hermitian if
 \begin{equation}
 \xi=\sum_{j=1}^na_j\frac{\de}{\de z_j}+\sum_{k,l=1}^nA_{kl}z_{l}\frac{\de}{\de z_k}\end{equation}
 for constant $a_j \in \cc$ and the skew-hermitian matrix $A=(A_{kl})$, i.e. $A^*=-A$.
\end{definition}
Throughout this section we fix such a matrix $A$ and denote the corresponding holomorphic vector field by $\xi$.

The next lemma collects the commutation relations for differentiation with respect to such vector fields. It will be fundamental for the proof of {Proposition \ref{fquadratic}} below.
\begin{lemma}\label{commute}
 Let $u$ be {a} $C^4$ smooth function, possibly complex valued, and 
 \[ \xi=\xi_j\frac{\de}{\de z_j}\] an affine skew-hermitian vector field. Then
 \begin{equation} \label{coefder}
  (\xi_k)_l=A_{kl}=-\bar{A}_{lk}=-(\bar{\xi}_l)_{\bar{k}}, \end{equation}
  \begin{equation} \label{third} u_{(\xi)i\bar{j}}=u_{i\bar{j}(\xi)}+(\xi_k)_iu_{k\bar{j}}+(\bar{\xi}_k)_{\bar{j}}u_{i\bar{k}}, \end{equation}
  \begin{equation} \label{fourth} \begin{gathered}
  u_{(\xi)(\xi)i\bar{j}}=
  u_{i\bar{j}(\xi)(\xi)}\\
  +(\xi_l)_i(\xi_k)_lu_{k\bar{j}}+(\bar{\xi}_l)_{\bar{j}}(\bar{\xi}_k)_{\bar{l}}u_{i\bar{k}}+(\bar{\xi_l})_{\bar{j}}(\xi_k)_iu_{k\bar{l}}+(\xi_l)_i(\bar{\xi}_k)_{\bar{j}}u_{l\bar{k}}\\
  +(\xi_k)_iu_{k\bar{j}(\xi)}+(\xi_l)_iu_{l\bar{j}(\xi)}+(\bar{\xi}_l)_{\bar{j}}u_{i\bar{l}(\xi)}+(\bar{\xi}_k)_{\bar{j}}u_{i\bar{k}(\xi)}.
 \end{gathered} \end{equation}
\end{lemma}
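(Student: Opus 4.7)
The plan is to prove the three identities in sequence, each reducing to a direct Leibniz-rule calculation that exploits two structural features of $\xi$: it is a \emph{holomorphic} vector field (so its coefficients are antiholomorphic-independent), and it is \emph{affine} in $z$ (so the matrix entries $(\xi_k)_l$ are constants).

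First, for (\ref{coefder}): writing $\xi_k = a_k + A_{km} z_m$ gives $(\xi_k)_l = \partial_{z_l} \xi_k = A_{kl}$ by inspection, and the complex conjugate relation $(\bar\xi_l)_{\bar k} = \overline{A_{lk}}$ then forces $A_{kl} = -(\bar\xi_l)_{\bar k}$ to be equivalent to the skew-hermitian condition $A^*=-A$, which holds by assumption.

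Next, for (\ref{third}), I would expand $u_{(\xi)} = \xi_k u_k + \bar\xi_k u_{\bar k}$ and compute $\partial_{\bar z_j}\partial_{z_i}$ of this expression. Since $\bar\xi_k$ is antiholomorphic, $\partial_{z_i}\bar\xi_k = 0$; and since $\xi_k$ is holomorphic, $\partial_{\bar z_j}\xi_k = 0$. Using the Leibniz rule this leaves
\[
u_{(\xi)i\bar j} = (\xi_k)_i\,u_{k\bar j} + \xi_k\,u_{ki\bar j} + (\bar\xi_k)_{\bar j}\,u_{\bar k i} + \bar\xi_k\,u_{\bar k i\bar j}.
\]
The second and fourth terms combine to $u_{i\bar j(\xi)}$ (after commuting partials, which is allowed since $u\in C^4$), giving the claim.

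For (\ref{fourth}) I would simply iterate (\ref{third}) applied to $v := u_{(\xi)}$, writing
\[
u_{(\xi)(\xi)i\bar j} = v_{(\xi)i\bar j} = v_{i\bar j(\xi)} + (\xi_k)_i v_{k\bar j} + (\bar\xi_k)_{\bar j} v_{i\bar k},
\]
and then replacing $v_{k\bar j} = u_{(\xi)k\bar j}$ and $v_{i\bar k} = u_{(\xi)i\bar k}$ once more by (\ref{third}). To process $v_{i\bar j(\xi)}$, I commute partials and apply (\ref{third}) to $u_{(\xi)i\bar j}$ before differentiating in $(\xi)$; here the crucial point is that $(\xi_l)_i$ and $(\bar\xi_l)_{\bar j}$ are \emph{constants} by (\ref{coefder}), so $\partial_{(\xi)}$ passes through them without producing extra curvature-like terms. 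Collecting everything and relabeling dummy indices yields the nine terms on the right-hand side of (\ref{fourth}).

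No step is really an obstacle; the only thing to be careful about is bookkeeping of the index pairings (barred vs.\ unbarred, $i$ vs.\ $\bar j$) and noticing that the two pairs of repeated contributions $(\xi_k)_i u_{k\bar j(\xi)} + (\xi_l)_i u_{l\bar j(\xi)}$ and $(\bar\xi_l)_{\bar j} u_{i\bar l(\xi)} + (\bar\xi_k)_{\bar j} u_{i\bar k(\xi)}$ in the statement are genuinely the same sum written with distinct dummy indices, matching precisely what the iteration produces.
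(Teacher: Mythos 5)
Your proof is correct and follows essentially the same route as the paper: read off (\ref{coefder}) directly from the affine skew-hermitian form, obtain (\ref{third}) by a Leibniz computation using holomorphicity of $\xi$, and obtain (\ref{fourth}) by iterating (\ref{third}) applied to $v=u_{(\xi)}$ together with the constancy of $(\xi_k)_i$ when passing $\partial_{(\xi)}$ through the correction terms. The bookkeeping observation that the two first-order-in-$\xi$ contributions are each counted twice (written with distinct dummy indices in the statement) also matches the paper's computation.
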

\begin{proof}
Formula (\ref{coefder}) just exploits the affine skew-hermitian property of $\xi$. Formula (\ref{third}) follows from the holomorphicity of $\xi$ and no further properties of the coefficients are involved. For (\ref{fourth}) one makes the iterative application of (\ref{third}) to get
\begin{equation}
\begin{gathered}
u_{(\xi)(\xi)i\bar{j}}=
u_{(\xi)i\bar{j}(\xi)}+(\xi_k)_iu_{(\xi)k\bar{j}}+(\overline{\xi_k})_{\bar{j}} u_{(\xi)i\bar{k}}\\
=u_{i\bar{j}(\xi)(\xi)}+(\xi_k)_iu_{k\bar{j}(\xi)}+(\overline{\xi_k})_{\bar{j}} u_{i\bar{k}(\xi)}\\
+(\xi_k)_i\left( u_{k\bar{j}(\xi)}+(\xi_l)_ku_{l\bar{j}}+(\overline{\xi_l})_{\bar{j}} u_{k\bar{l}(\xi)}\right)\\
+(\xi_{\bar{k}})_{\bar{j}}\left(u_{i\bar{k}(\xi)}+(\xi_l)_iu_{l\bar{k}}+(\overline{\xi_l})_{\bar{k}} u_{i\bar{l}} \right)
\end{gathered}
\end{equation}
Note that we heavily used the complex affine structure of $\xi$ in the computation above.
\end{proof}

The advertised complex analogue of Lemma 2.1 from \cite{ITW} turns out to be true.
\begin{proposition}\label{fquadratic}
For any complex Hessian operator $\mathcal F$, affine skew-hermitian vector field $\xi$ and admissible function $u$ we have:
\begin{equation}\label{seven'}
 \mathcal F^{i\bar{j}}u_{(\xi)(\xi)i\bar{j}}=\Big(\mathcal F\big((u_{p\bar{q}})\big)\Big)_{(\xi)(\xi)}-\mathcal F^{i\bar{j},p\bar{q}}u_{(\xi)i\bar{j}}u_{(\xi)p\bar{q}},
\end{equation}
in particular for concave $\mathcal F$ we get
\begin{equation}\label{seven}
 \mathcal F^{i\bar{j}}u_{(\xi)(\xi)i\bar{j}}\geq \Big(\mathcal F \big((u_{p\bar{q}})\big) \Big)_{(\xi)(\xi)}.
\end{equation}
\end{proposition}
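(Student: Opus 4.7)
The approach is to realize both sides of \eqref{seven'} as the second $t$-derivative, at $t=0$, of the same quantity, where $t$ parametrizes the holomorphic flow of $\xi$. The role of the skew-hermitian hypothesis on $A$ is precisely that it makes the linear part of this flow unitary, so that property \eqref{unitary} can be applied.

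Concretely, I would let $\phi_t(z) = e^{tA}z + c(t)$ be the flow generated by $\xi$, with $c(t)$ a time-dependent translation, and set $u^t(z) := u(\phi_t(z))$. By the chain rule,
\[
u^t_{z_p\bar z_q}(z) = (e^{tA})_{kp}\, u_{z_k\bar z_l}(\phi_t(z))\, \overline{(e^{tA})_{lq}},
\]
i.e.\ $U^t(z) = B^*\,U(\phi_t(z))\,B$ in matrix form, where $B := e^{t\bar A}$. Since $A^* = -A$ gives $B^* = e^{-t\bar A} = B^{-1}$, $B$ is unitary, so \eqref{unitary} implies $\mathcal{F}(U^t(z)) = \mathcal{F}(U(\phi_t(z)))$. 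Differentiating this identity twice in $t$ at $t=0$ would then give \eqref{seven'}: on the right hand side the flow chain rule yields $\bigl(\mathcal{F}(u_{p\bar q})\bigr)_{(\xi)(\xi)}$, while on the left the same rule combined with the commutation of $\partial_{z_p}\partial_{\bar z_q}$ with $\tfrac{d}{dt}$ gives $\tfrac{d^2}{dt^2}\big|_{t=0} u^t_{p\bar q} = u_{(\xi)(\xi)p\bar q}$, and a further chain rule through $\mathcal{F}$ produces $\mathcal{F}^{i\bar j}\,u_{(\xi)(\xi)i\bar j} + \mathcal{F}^{i\bar j,p\bar q}\,u_{(\xi)i\bar j}\,u_{(\xi)p\bar q}$. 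Equating and rearranging would give \eqref{seven'}; the inequality \eqref{seven} then follows because $(u_{(\xi)i\bar j})$ is Hermitian (as $u_{(\xi)}$ is real valued) and concavity of $f$ on $\Gamma$ forces the quadratic form $\mathcal{F}^{i\bar j,p\bar q}$ to be negative semi-definite on Hermitian matrices.

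The main technical obstacle is the matrix bookkeeping in the first step — correctly identifying $U^t(z)$ as a unitary conjugate of $U(\phi_t(z))$. This is the sole place where the skew-hermitian hypothesis enters, and it is used crucially: were $A$ merely a general affine holomorphic vector field, $B$ would not be unitary, \eqref{unitary} would not apply, and \eqref{seven'} would pick up uncontrollable extra terms, which matches exactly the obstruction alluded to in Section 1 for general complex Hessian operators beyond the unitary-invariant framework. An equivalent but more laborious route would be to contract the third-derivative commutation identity \eqref{fourth} with $\mathcal{F}^{i\bar j}$ and verify directly that all four quadratic correction terms cancel pairwise, the cancellation being the infinitesimal form of \eqref{unitary} obtained by differentiating $\mathcal{F}(B^*UB) = \mathcal{F}(U)$ once at $B = I$ in skew-hermitian directions.
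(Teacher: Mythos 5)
Your argument is correct, and it takes a genuinely cleaner route than the paper's proof. The paper works infinitesimally: it differentiates the algebraic invariance $\mathcal F(B)=\mathcal F(e^{t\bar A}Be^{-t\bar A})$ at $t=0$ for a \emph{fixed} Hermitian matrix $B$, obtaining the pointwise identities \eqref{one} and \eqref{two}; it then needs to reintroduce the spatial dependence by further $(\xi)$-differentiation, invoking the commutation Lemma~\ref{commute}, and verifying by a lengthy term-by-term cancellation (the $C$ and $D$ blocks) that the quadratic corrections from \eqref{fourth} kill the surviving pieces of \eqref{two}. Your approach avoids all of that: by pulling $u$ back along the actual affine flow $\phi_t$ you get the single two-variable identity $\mathcal F\bigl(U^t(z)\bigr)=\mathcal F\bigl(U(\phi_t(z))\bigr)$, valid for all $(t,z)$ at once, and two $t$-derivatives at $t=0$ (with the harmless interchange $\partial_{z_p}\partial_{\bar z_q}\leftrightarrow d/dt$ and the chain rule through $\mathcal F$) land directly on \eqref{seven'}. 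The spatial derivatives are generated automatically by the flow, so the commutation lemma never has to be stated explicitly, and there is no cancellation to chase. Your concluding remark — that contracting \eqref{fourth} against $\mathcal F^{i\bar j}$ and checking the pairwise cancellation is the ``more laborious route'' — is in fact a precise description of what the paper does. Both proofs rest on the same structural fact, namely the unitarity of $e^{t\bar A}$ for skew-hermitian $A$ together with the unitary invariance \eqref{unitary} of $\mathcal F$, so your identification of where the skew-hermitian hypothesis is indispensable is spot on. One minor bookkeeping point: be sure to take the flow with $\dot z=\xi(z)$ (so that $\dot c=a+Ac$, $c(0)=0$), which makes $\tfrac{d}{dt}\big|_{t=0}(f\circ\phi_t)=f_{(\xi)}$ exactly, rather than the flow of $\Re(\xi)$, which would introduce a stray factor of $\tfrac12$; as written you have done this correctly.
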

\begin{proof} 
Recall the basic fact from linear algebra that for any skew-hermitian matrix $A$ and any $t\in\mathbb R$ the matrix $e^{t\bar{A}}$ is {\it unitary}. Of course, the same is true for $e^{tA}$. 

Thus given any complex Hessian operator $\mathcal F$ applied to the complex hermitian matrix $B$ we have for any $t\in\mathbb R$:
\begin{equation} \label{invar}
 \mathcal F(B)=\mathcal F(e^{t\bar{A}}Be^{-t\bar{A}}).
\end{equation}

Differentiating (\ref{invar}) for
\begin{equation}
A=(A_{ab}) \text{ and } B=(u_{i\bar{j}})
\end{equation}
once and twice at $t=0$ and using (\ref{coefder}) results in the following formulas, cf. (2.6) in \cite{ITW}:
\begin{equation}\label{one}
 \mathcal F^{i\bar{j}}(\overline{A_{ik}}u_{k\bar{j}} -u_{i\bar{k}}\overline{A_{kj}})=
 \mathcal F^{i\bar{j}}\big((\bar{\xi}_i)_{\bar{k}}u_{k\bar{j}}+(\xi_j)_ku_{i\bar{k}}\big)=0,
\end{equation}
\begin{equation}\label{two} \begin{gathered}
\mathcal F^{i\bar{j}}\big(A_{li}A_{kl}u_{k\bar{j}} +u_{i\bar{k}}A_{lk}A_{jl} -2A_{ki}u_{k\bar{l}}A_{jl}\big)\\
 +\mathcal F^{i\bar{j},p\bar{q}}
\big((\bar{\xi}_i)_{\bar{k}}u_{k\bar{j}}+(\xi_j)_ku_{i\bar{k}}\big)\big((\bar{\xi}_p)_{\bar{l}}u_{l\bar{q}}+(\xi_q)_lu_{p\bar{l}}\big)=0. \end{gathered}
\end{equation}

Differentiating, by $(\xi)$, both sides of (\ref{one}) results in
\begin{equation}\label{three}
\mathcal F^{i\bar{j}}\big((\bar{\xi}_i)_{\bar{k}}u_{k\bar{j}(\xi)}+(\xi_j)_ku_{i\bar{k}(\xi)}\big)+\mathcal F^{i\bar{j},p\bar{q}}\big((\bar{\xi}_i)_{\bar{k}}u_{k\bar{j}}+(\xi_j)_ku_{i\bar{k}}\big)u_{p\bar{q}(\xi)}=0
\end{equation}
where we used the fact that the derivatives of the coefficients of $\xi$ are constants.

To proceed further we exploit the commutation formulas (\ref{fourth}) for switching partial derivatives and differentiation with respect to $(\xi)$ in order to obtain:
\begin{equation}\label{four} \begin{gathered}
 \mathcal F^{i\bar{j}}u_{(\xi)(\xi)i\bar{j}}=\mathcal F^{i\bar{j}}u_{i\bar{j}(\xi)(\xi)}
  \\+\mathcal F^{i\bar{j}} \Big((\bar{\xi}_l)_{\bar{j}}(\bar{\xi}_k)_{\bar{l}}u_{i\bar{k}} +(\xi_l)_i(\xi_k)_lu_{k\bar{j}} +(\xi_k)_i(\bar{\xi}_l)_{\bar{j}}u_{k\bar{l}} +(\bar{\xi}_k)_{\bar{j}}(\xi_l)_iu_{l\bar{k}}\\
 +(\xi_k)_iu_{k\bar{j}(\xi)}+(\xi_l)_iu_{l\bar{j}(\xi)}+(\bar{\xi}_l)_{\bar{j}}u_{i\bar{l}(\xi)}+(\bar{\xi}_k)_{\bar{j}}u_{i\bar{k}(\xi)}\Big).\end{gathered}
 \end{equation}
We denote:
\begin{equation}\begin{gathered}
 C=\mathcal F^{i\bar{j}}u_{i\bar{j}(\xi)(\xi)}, \\
 D=\mathcal F^{i\bar{j}}\Big((\bar{\xi}_l)_{\bar{j}}(\bar{\xi}_k)_{\bar{l}}u_{i\bar{k}} +(\xi_l)_i(\xi_k)_lu_{k\bar{j}} +(\xi_k)_i(\bar{\xi}_l)_{\bar{j}}u_{k\bar{l}} +(\bar{\xi}_k)_{\bar{j}}(\xi_l)_iu_{l\bar{k}}\\
  +(\xi_k)_iu_{k\bar{j}(\xi)}+(\xi_l)_iu_{l\bar{j}(\xi)}+(\bar{\xi}_l)_{\bar{j}}u_{i\bar{l}(\xi)}+(\bar{\xi}_k)_{\bar{j}}u_{i\bar{k}(\xi)}\Big).\end{gathered}
 \end{equation}
The next observation is that using (\ref{commute}) and (\ref{third}):
\begin{equation} \begin{gathered}
 C=\Big(\mathcal F\big((u_{p\bar{q}})\big)\Big)_{(\xi)(\xi)}-\mathcal F^{i\bar{j},p\bar{q}}u_{i\bar{j}(\xi)}u_{p\bar{q}(\xi)}\\
 = \Big(\mathcal F\big((u_{p\bar{q}})\big)\Big)_{(\xi)(\xi)}
 -\mathcal F^{i\bar{j},p\bar{q}}(u_{(\xi)i\bar{j}}+(\bar{\xi}_i)_{\bar{k}}u_{k\bar{j}}+({\xi}_j)_{{k}}u_{i\bar{k}})u_{p\bar{q}(\xi)}.\end{gathered}
\end{equation}
Computing further
\begin{equation}\label{five} \begin{gathered}
 C=\Big(\mathcal F\big((u_{p\bar{q}})\big)\Big)_{(\xi)(\xi)}-\mathcal F^{i\bar{j},p\bar{q}}u_{(\xi)i\bar{j}}u_{p\bar{q}(\xi)}+\mathcal F^{i\bar{j}}((\bar{\xi}_i)_{\bar{k}}u_{k\bar{j}(\xi)}+(\xi_j)_ku_{i\bar{k}(\xi)})\\
=\Big(\mathcal F\big((u_{p\bar{q}})\big)\Big)_{(\xi)(\xi)}-\mathcal F^{i\bar{j},p\bar{q}}u_{(\xi)i\bar{j}}u_{(\xi)p\bar{q}}-\mathcal F^{i\bar{j},p\bar{q}}u_{(\xi)i\bar{j}}((\bar{\xi}_p)_{\bar{l}}u_{l\bar{q}(\xi)}+(\xi_q)_lu_{p\bar{l}(\xi)})\\
+\mathcal F^{i\bar{j}}((\bar{\xi}_i)_{\bar{k}}u_{k\bar{j}(\xi)}+(\xi_j)_ku_{i\bar{k}(\xi)})\\
=\Big(\mathcal F\big((u_{p\bar{q}})\big)\Big)_{(\xi)(\xi)}-\mathcal F^{i\bar{j},p\bar{q}}u_{(\xi)i\bar{j}}u_{(\xi)p\bar{q}}
 +2\mathcal F^{i\bar{j}}((\bar{\xi}_i)_{\bar{l}}u_{l\bar{j}(\xi)}+(\xi_j)_lu_{i\bar{l}(\xi)})\\
+\mathcal F^{i\bar{j},p\bar{q}}((\bar{\xi}_i)_{\bar{k}}u_{k\bar{j}}+({\xi}_j)_{{k}}u_{i\bar{k}})((\bar{\xi}_p)_{\bar{l}}u_{l\bar{q}(\xi)}+(\xi_q)_lu_{p\bar{l}(\xi)})\\
=\Big(\mathcal F\big((u_{p\bar{q}})\big)\Big)_{(\xi)(\xi)}-\mathcal F^{i\bar{j},p\bar{q}}u_{(\xi)i\bar{j}}u_{(\xi)p\bar{q}}
 +2\mathcal F^{i\bar{j}}((\bar{\xi}_i)_{\bar{l}}u_{l\bar{j}(\xi)}+(\xi_j)_lu_{i\bar{l}(\xi)})\\
-\mathcal F^{i\bar{j}}\big((\xi_l)_i(\xi_k)_lu_{k\bar{j}}+(\bar{\xi}_k)_{\bar{l}}(\bar{\xi}_l)_{\bar{j}}u_{i\bar{k}}+2(\xi_k)_i(\bar{\xi}_l)_{\bar{j}}u_{k\bar{l}}\big)\end{gathered}
\end{equation}
where in the calculations we used twice (\ref{three}) and then (\ref{two}) to get the above equalities.

On the other hand using (\ref{coefder}) and the fact that $\xi$ is skew-hermitian the term $D$ reads
\begin{equation}\label{six}\begin{gathered}
 D=
 \mathcal F^{i\bar{j}}(\bar{\xi}_{{l}})_{\bar{j}}(\bar{\xi}_{{k}})_{\bar{l}}u_{i\bar{k}}
 +\mathcal F^{i\bar{j}}({\xi}_l)_{{i}}({\xi}_k)_{{l}}u_{k\bar{j}}
 +2\mathcal F^{i\bar{j}}(\bar{\xi}_l)_{\bar{j}}(\xi_k)_{i}u_{k\bar{l}}\\
- 2 \mathcal F^{i\bar{j}}\big((\bar{\xi}_i)_{\bar{l}}u_{l\bar{j}(\xi)}+(\xi_j)_lu_{i\bar{l}(\xi)}\big).\end{gathered}\end{equation}

Coupling (\ref{five}) and (\ref{six}) we finally obtain the formula (\ref{seven'}). As $u_{(\xi)}$ is real valued function and $\mathcal F$ is assumed to be concave we get (\ref{seven}) as well.
\end{proof}

\subsection{Skew-hermitian approximate tangential vector fields}

For the proof of existence of special vector fields described in the Introduction we need special holomorphic coordinates. 
\begin{definition}
 Let $\Omega$ be a bounded strictly pseudoconvex domain in $\C$ with a boundary of class $\mathcal C^4$ and fix a point $p\in\partial\Omega$. 
 
A local coordinate system centered at $p$, which we always identify with the coordinate origin, is said to be adapted if in these coordinates, around $p$, the defining function $-\psi$ reads
\begin{equation} \label{rho} -\psi(z)=\rho(z)=-2\Re(z_n)+\sum_{j=1}^n|z_j|^2+R_3(z)+O(||z||^4),\end{equation}
\end{definition}
where $R_3$ is a homogeneous third order term. The only role $R_3$ plays later on is that its derivatives are $O(||z||^2)$. Henceforth, for the sake of simplicity, the expansion will be written with $R_3(z)+O(||z||^4)$ replaced by $O(||z||^3)$, tacitly understanding that its first order derivatives are $O(||z||^2)$ terms.

Note that in particular in adopted coordinates around origin
\begin{equation}
\label{xnasymp} x_n = O(\xn)
\end{equation}
for $z'=(z_1,...,z_{n-1})$.

Note that for the unit ball at the point $(0,\cdots,0,1)$ the defining function 
\[\rho(z):=||z||^2-1\] has exactly the shape above without the third order remainder. We shall need the following classical fact which can be traced back to \cite{BT76}.
\begin{proposition}\label{bedtay}
 Adapted coordinates exist near any boundary point $p$ of a $C^4$ strictly pseudoconvex domain.
\end{proposition}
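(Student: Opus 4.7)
The plan is to reduce a general defining function of $\Omega$ to the desired form through a sequence of explicit coordinate changes together with a modification of the defining function itself. After translating so that $p=0$, I would apply a unitary change of variables to align the complex normal direction of $\partial\Omega$ at $0$ with $\partial/\partial z_n$, then rescale $\rho=-\psi$ by a positive constant so that $d\rho(0)=-2\,dx_n$. This puts $\rho$ into the form
\[\rho(z)=-2\Re(z_n)+2\Re(B(z))+H(z,\bar z)+O(\|z\|^3),\]
with $B(z)=\sum b_{jk}z_jz_k$ a holomorphic quadratic polynomial and $H$ a Hermitian quadratic form whose matrix $(c_{jk})$ is positive definite (by strict plurisubharmonicity of $-\psi$).

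The second step is the nonlinear holomorphic change $w_n=z_n-B(z)$, $w_j=z_j$ for $j<n$, which is biholomorphic near $0$ since its complex Jacobian at $0$ is the identity. The identity $-2\Re(w_n)=-2\Re(z_n)+2\Re(B(z))$ absorbs the pluriharmonic quadratic into the new linear term, while rewriting $H$ in the $w$-variables produces only $O(\|w\|^3)$ errors, yielding $\rho=-2\Re(w_n)+H(w,\bar w)+O(\|w\|^3)$. Next, I would apply a linear change $w=Av$ whose last row equals $(0,\ldots,0,1)$, so that $v_n=w_n$ and the linear part $-2\Re(v_n)$ is preserved. Choosing the upper-left $(n-1)\times(n-1)$ block of $A$ to diagonalize and normalize the corresponding block of $(c_{jk})$ to the identity, and the remaining column of $A$ to cancel the cross terms $c_{jn}$ for $j<n$, brings the Hermitian form to $\sum_{j<n}|v_j|^2+\mu|v_n|^2$, where $\mu>0$ is the Schur complement. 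In general $\mu\neq 1$.

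The main obstacle is exactly this residual constant $\mu$: no linear holomorphic change compatible with the fixed linear part $-2\Re(v_n)$ can rescale $|v_n|^2$ to have coefficient one. Here one must exploit the freedom to modify the defining function itself, which is the nonlinear holomorphic ingredient highlighted in the introduction. I would set $\tilde\rho=\rho+c\rho^2$ with $c=(1-\mu)/2$. Since $\rho^2=4\Re(v_n)^2+O(\|v\|^3)=2|v_n|^2+2\Re(v_n^2)+O(\|v\|^3)$, the coefficient of $|v_n|^2$ in $\tilde\rho$ becomes $\mu+2c=1$, at the price of introducing the holomorphic quadratic $2c\Re(v_n^2)$. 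A final holomorphic change $u_n=v_n-cv_n^2$, $u_j=v_j$ for $j<n$, kills this term by the same mechanism as the second step. Since $\tilde\rho=\rho(1+c\rho)$ has the same sign as $\rho$ near $0$ and $d\tilde\rho=d\rho$ along $\{\rho=0\}$, the modified $\tilde\rho$ still defines $\Omega$ locally. In the final coordinates $u$, one reads off the expansion $\tilde\rho(u)=-2\Re(u_n)+\sum_{j=1}^n|u_j|^2+O(\|u\|^3)$, which is the desired adapted form once any homogeneous cubic contribution is absorbed into $R_3$.
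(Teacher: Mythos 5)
The paper does not actually prove this proposition; it records it as a classical fact traced back to \cite{BT76}, so there is no in-paper argument to compare against. Your proof is correct and complete: after the unitary normalization, the quadratic holomorphic change $w_n=z_n-B(z)$ removes the pluriharmonic quadratic, the block-lower-triangular linear change reduces the Levi form to $\sum_{j<n}|v_j|^2+\mu|v_n|^2$ with $\mu>0$ the Schur complement (positive by strict plurisubharmonicity), and the modification $\tilde\rho=\rho+c\rho^2$ with $c=(1-\mu)/2$ followed by $u_n=v_n-cv_n^2$ yields the adapted form; the check that $\tilde\rho=\rho(1+c\rho)$ is still a local defining function with $d\tilde\rho=d\rho$ on $\{\rho=0\}$ is also right, and all the coordinate changes are biholomorphic near $0$ since their complex Jacobians there are the identity. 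One remark: the self-imposed constraint that the last row of the linear change be $(0,\dots,0,1)$ is what creates the residual $\mu$, and your $\rho+c\rho^2$ trick is one way around it; a slightly more elementary alternative is to allow the anisotropic dilation $w_n=\lambda v_n$, $w_j=\sqrt{\lambda}\,v_j$ for $j<n$ with $\lambda=1/\mu$, and simultaneously rescale the defining function by $1/\lambda$, which keeps the linear part $-2\Re(v_n)$ and normalizes the $|v_n|^2$ coefficient without changing $\rho$ nonlinearly. Either route is a legitimate realization of the classical osculating-ball (Narasimhan-type) normalization.
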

For $(1,0)$ vector fields $\xi=\xi_l\frac{\partial}{\partial z_l}$ and $\eta=\eta_l\frac{\partial}{\partial z_l}$ we introduce the notation standing to the end of the paper:
\begin{equation}
\begin{gathered}
(\xi,\eta):= \Re \left(\sum_{l=1}^n\xi_l\overline{\eta_l}\right)=4\left(\Re(\xi),\Re(\eta)\right),\\
\left\langle \xi,\eta \right\rangle = \sum_{l=1}^n\xi_l\overline{\eta_l},\\
||\xi||=\left\langle \xi,\xi\right\rangle^{\frac{1}{2}}.
\end{gathered}
\end{equation}
Note that $(\cdot,\cdot)$ denotes the standard inner product in $\rr^{2n}$.
Given a domain $\Omega$ in $\C$  described by a defining function $\rho$ we introduce the notion of an approximate tangential vector field.
\begin{definition}\label{tang}
A $(1,0)$ vector field $\xi$ is said to be approximate (unital) tangential near a point $p\in\partial\Omega$ in a coordinate system $(z_1,\cdots,z_n)$ centered at $p$ if one has
 \begin{enumerate}
  \item $(\xi,\eta)=\Re(\xi_j(z)\frac{\de\rho}{\de z_j}(z))=O(\xn)$ for $\eta=\frac{\de\rho}{\de \overline{z_i}}\partial_{z_i}$;
  \item If $\zeta(z)$ is the $(1,0)$ part of the projection of $\Re(\xi(z))$ onto the (real) tangent space $T_{z}^{\mathbb R}\partial\Omega$ then $||\xi(z)-\zeta(z)||\leq C \xn$ for some $C$ dependent only on $\partial\Omega$;
  \item ${||\xi(z)||=1+O(\xn)}$.
 \end{enumerate}
\end{definition}

In this section we prove the following result:
\begin{proposition}\label{approx}
 Let $\Omega$ be a bounded strictly pseudoconvex domain in $\C$ with a boundary of class $\mathcal C^4$. Then near any $p\in\partial\Omega$ in an adapted coordinate system there exist $2n-1$ holomorphic affine skew-hermitian vector fields which are approximate tangential and span $T_z^{\mathbb R}\partial\Omega$ at $p$. Namely, we can choose
\begin{equation}\label{xk}
 \xi_k=\de_{z_k}+z_k\de_{z_n}-z_n\de_{z_k},\ \ k=1,\cdots,n-1
\end{equation}
\begin{equation}\label{yj}
 \xi_{n-1+j}=i\de_{z_j}-iz_j\de_{z_n}-iz_n\de_{z_j},\ \ j=1,\cdots,n-1
\end{equation}
\begin{equation}\label{yn}
 \xi_{2n-1}=i\de_{z_n}-iz_n\de_{z_n}.\ \
\end{equation}
\end{proposition}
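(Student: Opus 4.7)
The plan is to verify each item of the proposition by direct computation in adapted coordinates, exploiting the normal form (\ref{rho}).

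First I would check the affine skew-hermitian property of all three families. For $k<n$, writing $\xi_k = \partial_{z_k} + (A_k)_{jl}z_l\partial_{z_j}$, the matrix $A_k$ has $(A_k)_{nk}=1$, $(A_k)_{kn}=-1$ and zero elsewhere, so $A_k^* = -A_k$ is immediate. The linear-part matrices of $\xi_{n-1+j}$ and $\xi_{2n-1}$ have as their only nonzero entries $\pm i$ in positions $(n,j),(j,n)$ and $(n,n)$ respectively, and are each skew-hermitian by inspection.

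Next, for the tangency condition (1) of Definition \ref{tang}, I would expand
\[ \partial_{z_k}\rho = \bar z_k + O(\|z\|^2),\quad k<n, \qquad \partial_{z_n}\rho = -1+\bar z_n + O(\|z\|^2) \]
from (\ref{rho}) and compute, for $\xi_k$ with $k<n$ (nonzero components $\xi_k^k = 1-z_n$, $\xi_k^n = z_k$),
\[ \sum_l \xi_k^l \partial_{z_l}\rho = (\bar z_k - z_k) + (z_k\bar z_n - z_n\bar z_k) + O(\|z\|^2). \]
Both bracketed quantities are purely imaginary, so taking real parts leaves only an $O(\|z\|^2)$ remainder, which on $\partial\Omega$ equals $O(\|(z',y_n)\|^2)$ by (\ref{xnasymp}). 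Fully parallel computations handle $\xi_{n-1+j}$ and $\xi_{2n-1}$; in both cases the leading pieces turn out to be purely imaginary and drop out under $\Re$.

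For the unital condition (3), a direct expansion gives for $k<n$
\[ \|\xi_k\|^2 = |1-z_n|^2 + |z_k|^2 = 1 - 2x_n + |z_n|^2 + |z_k|^2, \]
and $\rho=0$ on $\partial\Omega$ supplies $2x_n = |z'|^2 + |z_n|^2 + O(\|z\|^3)$, so $\|\xi_k\|^2 = 1 + O(\|z\|^2)$ on the boundary, whence $\|\xi_k\| = 1 + O(\|(z',y_n)\|^2)$ by (\ref{xnasymp}). The same simplification works for the remaining two families. Condition (2) then follows formally from (1): the real vector field $\xi+\bar\xi$ applied to $\rho$ produces $2(\xi,\eta)$, and this directional derivative equals $\|\nabla\rho\|$ times the scalar component of $\xi+\bar\xi$ along the unit real normal, so (1) forces that scalar component to be $O(\|(z',y_n)\|^2)$ (here $\|\nabla\rho\|$ is uniformly bounded below by the defining-function property). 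Consequently $\Re\xi$ differs from its projection onto $T_z^{\mathbb R}\partial\Omega$ by a vector of norm $O(\|(z',y_n)\|^2)$, and taking $(1,0)$-parts preserves this bound.

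Finally, at $z=0$ the vector fields reduce to $\xi_k|_0 = \partial_{z_k}$ for $k<n$, $\xi_{n-1+j}|_0 = i\partial_{z_j}$ and $\xi_{2n-1}|_0 = i\partial_{z_n}$; their real parts are (up to positive scalars) $\partial_{x_k}$, $\partial_{y_j}$ and $\partial_{y_n}$. Since $d\rho|_0 = -2\,dx_n$ by (\ref{rho}), the real tangent space $T_0^{\mathbb R}\partial\Omega$ is precisely the span of these $2n-1$ real vectors. The main obstacle is purely organizational: one has to check all three families, recognize which leading terms are purely imaginary (and hence invisible after $\Re$), and apply (\ref{xnasymp}) to convert the arising $O(\|z\|^2)$ and $O(\|z\|^3)$ estimates into the $O(\|(z',y_n)\|^2)$ form demanded by Definition \ref{tang}. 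No substantially new idea is needed beyond the clever shape of the $\xi_k$'s, which is dictated by the normal form (\ref{rho}).
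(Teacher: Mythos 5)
Your proof is correct and follows essentially the same route as the paper: direct verification of the skew-hermitian property, expansion of $\rho_{z_l}$ from the adapted normal form to confirm the leading terms of $\Re(\xi_l\rho_{z_l})$ are purely imaginary, the $x_n = O(\|(z',y_n)\|^2)$ estimate for unitality, and the observation that (2) is a formal consequence of (1). The paper organizes (2) via the explicit decomposition $\xi = (\xi,\eta)\eta + \zeta$ whereas you phrase it through the normal component of $\xi+\bar\xi$, but this is the same idea.
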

\begin{proof}
All these vector fields satisfy
\[ ||\xi_s||=1+O(||(z',y_n)||^2)\] which can be easily seen as $\Re(z_n)$ itself is an $O(||(z',y_n)||^2)$ term. Thus $(3)$ is confirmed. Regarding $(1)$ we have for any $1 \leq s \leq n-1$
\begin{equation}\begin{gathered}
\frac{\de\rho}{\de z_j}(z) \cdot (\xi_s)_j(z)=\rho_s(z)(1-z_n)+\rho_n(z) z_s \\
=(\rho_{s\bar{s}}(0)\overline{z_s}+O(||z||^2))(1-z_n)+(\rho_n(0)+\rho_{n\bar{n}}(0)\overline{z_n})z_s\\
=-2\ii\Im(z_s)+O(||z||^2).
\end{gathered}\end{equation}
Next, for $1\leq s \leq n-1$
\begin{equation}\begin{gathered}
\frac{\de\rho}{\de z_j}(z) \cdot (\xi_{s+n-1})_j(z)=
\ii \rho_s(z)(1-z_n)-\ii\rho_n(z) z_s \\
=\ii(\rho_{s\bar{s}}(0)\overline{z_s}+O(||z||^2))(1-z_n)-\ii(\rho_n(0)+\rho_{n\bar{n}}(0)\overline{z_n})z_s\\
=2\ii\Re(z_s)+O(||z||^2)
\end{gathered}\end{equation}
so we have verified $(1)$ for (\ref{xk}) and (\ref{yj}). At last
\begin{equation}\begin{gathered}
\frac{\de\rho}{\de z_j}(z) \cdot (\xi_{2n-1})_j(z)=\rho_n(z)\ii(1-z_n)\\
=\ii(\rho_n(0)+\rho_{n\bar{n}}(0)\overline{z_n}+O(||z||^2))(1-z_n)\\
=-\ii|1-z_n|^2+O(||z||^2).
\end{gathered}\end{equation}
To verify $(2)$ we note that the real part of
\begin{equation}\label{normalvf}
\eta = \rho_{\overline{z_i}}\partial_{z_i}
\end{equation}
is the local normal vector field along $\partial \Omega$. We decompose
\begin{equation}
\begin{gathered}
\xi = ( \xi,\eta ) \cdot \eta + \zeta
\end{gathered}
\end{equation}
meaning in particular that $\Re(\zeta)$ is the projection of $\Re(\xi)$ on tangent vector space. Thus the difference to be examined equals
\begin{equation}
||\xi(z)-\zeta(z)|| = ||( \xi,\eta ) \cdot \eta||=|( \xi,\eta )|\cdot|| \eta||=O(||z||^2)
\end{equation} 
from property $(1)$ which we just verified above. We note that $(2)$ is, in fact, a consequence of $(1)$ and is included in the definition for the special role it plays in the argument below.
\end{proof}

\subsection{Boundary estimate}
We recall that for any affine-hermitian approximate tangential vector field $\xi$ we apply the decomposition
\begin{equation}
\xi = ( \xi,\eta ) \cdot \eta + \zeta
\end{equation}
where $\Re(\zeta)$ is tangent vector field and $\eta$ for the rest of the section is as in (\ref{normalvf}). For convenience we also introduce the unit normal vector field
\[\label{realnormal} \gamma = - \frac{\Re(\eta)}{\left(\Re(\eta),\Re(\eta)\right)^{
\frac{1}{2}}}.\] Of course in adopted coordinates at $p$ \[\gamma(p)=\partial_{x_n}.\]

Starting from this subsection, we make use of the following additional notation:
\begin{equation}\label{M}
{M:=\sup_{\partial\Omega}u_{\gamma\gamma}.}
\end{equation}
We shall also use the following notation, borrowed from \cite{ITW}. A function $h$ will denote any function of the type
\begin{equation}\label{hex} h(z',y_n)={a}\begin{pmatrix}
		x_j\\
		y_j\\
		y_n
	\end{pmatrix}+C_1||(z',y_n)||^2+C_2M||(z',y_n)||^4,\end{equation}
where $j\in \{ 1,\cdots,n-1\}$ and {$a$} is a $(2n-1)$ - dimensional vector with entries uniformly under control if $z$ is sufficiently close to $0$. Moreover, $C_1$ and $C_2$ are also uniform constants under control. In particular, the shape of $h$ may vary line-to-line but the pertinent constants will always be under control, they will depend on the geometry of $\partial\Omega$ and on the bounds of $u$ established in Subsection 3.2.

Let us start with the analysis of quantity
\begin{equation}
w(z)=u_{(\xi)(\xi)}(z)-u_{(\xi)(\xi)}(0)
\end{equation}
for any $\xi$ from Proposition \ref{approx}. One computes, at a  point $z$,
\begin{equation} \label{(xi)(xi)}
\begin{gathered}
u_{(\xi)(\xi)}= u_{[\xi][\xi]}+ \xi_i (\xi_j)_i u_j+\xi_i(\xi_{\bar{j}})_iu_{\bar{j}}+\xi_{\bar{i}}(\xi_j)_{\bar{i}}u_j+\xi_{\bar{i}}(\xi_{\bar{j}})_{\bar{i}}u_{\bar{j}}\\
=u_{[\xi][\xi]}+ \xi_i (\xi_j)_i u_j+\xi_{\bar{i}}(\xi_{\bar{j}})_{\bar{i}}u_{\bar{j}}
= u_{[\xi][\xi]}+ 2 \Re \big(\xi_i (\xi_j)_i u_j\big).
\end{gathered}
\end{equation}

We now compute
\begin{equation} \label{[xi][xi]}
\begin{gathered}
u_{[\xi][\xi]}=u_{[( \xi,\eta ) \cdot \eta + \zeta][( \xi,\eta ) \cdot \eta + \zeta]}\\
=( \xi,\eta )^2 u_{[\eta ][\eta]}+2( \xi,\eta ) u_{[\eta ][\zeta]}+u_{[\zeta][\zeta]}:=B_1+B_2+B_3.
\end{gathered}
\end{equation}

One easily sees:
\begin{equation}\label{bbest}
\begin{gathered}
B_1 \leq CM ||(z',y_n)||^4, \\
B_2 \leq C ||(z',y_n)||^2
\end{gathered}
\end{equation}
as from Proposition \ref{approx} we have $( \xi,\eta ) = O(||(z',y_n)||^2)$.

Using the boundary condition for $u$ and the fact that $\Re (\zeta)$ is a tangent vector field, we obtain the following: 
\begin{equation} \label{curvsec} \begin{gathered}
B_3=u_{[\zeta][\zeta]}=4 u_{\Re (\zeta) \Re (\zeta)}= 4 \phi_{\Re (\zeta) \Re (\zeta)}\\
+4\left(\Re (\zeta),\Re (\zeta)\right) \kappa\left(z,\frac{\Re (\zeta)}{\left(\Re (\zeta),\Re (\zeta)\right)^{\frac{1}{2}}}\right)(\phi-u)_{\gamma}\\
=\phi_{[\zeta][\zeta]}+||\zeta||^2 \kappa\left(z,\frac{\Re (\zeta)}{\left(\Re (\zeta),\Re (\zeta)\right)^{\frac{1}{2}}}\right)(\phi-u)_{\gamma}\\
\leq \phi_{[\zeta][\zeta]}+ \kappa\left(z,\frac{\Re (\zeta)}{\left(\Re (\zeta),\Re (\zeta)\right)^{\frac{1}{2}}}\right)(\phi-u)_{\gamma} + h(z)\end{gathered},
\end{equation} 
where $\kappa\left(z,\frac{\Re (\zeta)}{\left(\Re (\zeta),\Re (\zeta)\right)^{\frac{1}{2}}}\right)$ denotes the curvature in direction $\frac{\Re (\zeta)}{\left(\Re (\zeta),\Re (\zeta)\right)^{\frac{1}{2}}}$. Note that we used $||\zeta||^2 = 1 + O(||(z',y_n)||^2)$ in the above calculation.

Returning to the term $2 \Re \big(\xi_i (\xi_j)_i u_j\big)$ in (\ref{(xi)(xi)}) we get for $\xi=\xi_k$ from Proposition \ref{approx} 
\begin{equation} \label{xixifirst}
\begin{gathered}
\xi_i (\xi_j)_i u_j = \begin{cases}
\big( \delta_i^k(1-z_n)+\delta_i^nz_k\big) \big(\ \delta_j^k(1-z_n)+\delta_j^n z_k\big)_i u_j
\\ =-z_ku_k+(1-z_n)u_n \text{ for } k \in \{1,...,n-1\};\\
\big( \delta_i^{k-n+1}\ii(1-z_n)+\delta_i^n(-\ii z_{k-n+1})\big) \big(\ \delta_j^{k-n+1}\ii(1-z_n)\\
+\delta_j^n (-\ii z_{k-n+1})\big)_i u_j =-z_{k-n+1}u_{k-n+1}+(1-z_n)u_n \\
\text{ for } k \in \{n,...,2n-2\};\\
\big(\delta^n_i\ii (1-z_n) \big) \big(\delta^n_j \ii (1-z_n) \big)_i u_j \\
=(1-z_n)u_n \text{ for } k =2n-1.
\end{cases}
\end{gathered}
\end{equation}

We can now finish the preliminary analysis of the quantity $w$ with the following lemma.
\begin{lemma}\label{w-bound}
We have for $z\in\partial\Omega$, sufficiently close to $0$
$$w(z)\leq h(z',y_n).$$
\end{lemma}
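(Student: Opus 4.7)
All the heavy computational pieces have already been assembled above, so the lemma will follow by carefully bookkeeping the decomposition
\[ u_{(\xi)(\xi)}(z) = B_1(z) + B_2(z) + B_3(z) + 2\Re\bigl(\xi_i(\xi_j)_i u_j\bigr)(z), \]
subtracting its value at $0$, and checking that each resulting term lies in the template (\ref{hex}). The first observation I would record is that in the adapted coordinates of Proposition \ref{bedtay} one has $\rho_k(0)=0$ for $k<n$ and $\rho_n(0)=-1$, so a direct check shows $(\xi,\eta)(0)=0$ for every vector field in (\ref{xk})--(\ref{yn}). Consequently $\zeta(0)=\xi(0)$ is itself tangent to $\partial\Omega$ at $0$, the boundary identity (\ref{curvsec}) applies at the origin as well, and $B_1(0)=B_2(0)=0$. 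The bounds (\ref{bbest}) then absorb $B_1(z)-B_1(0)$ into the $C_2M\|(z',y_n)\|^4$ slot of $h$ and $B_2(z)-B_2(0)$ into the $C_1\|(z',y_n)\|^2$ slot.

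Next I would treat $B_3(z)-B_3(0)$ by applying (\ref{curvsec}) at both $z$ and $0$, which gives
\[ \bigl[\phi_{[\zeta(z)][\zeta(z)]}(z)-\phi_{[\zeta(0)][\zeta(0)]}(0)\bigr] + \bigl[\kappa(z,\cdot)\|\zeta(z)\|^2(\phi-u)_\gamma(z)-\kappa(0,\cdot)\|\zeta(0)\|^2(\phi-u)_\gamma(0)\bigr] + h(z). \]
The first bracket is a difference of values of a smooth (in $z$) function because $\phi\in C^{3,1}(\partial\Omega)$ and the coefficients of $\zeta$ are polynomial, so it is bounded by a linear term of the form $a\cdot(x_j,y_j,y_n)$ after using $x_n=O(\|(z',y_n)\|^2)$. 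In the second bracket $\kappa$ is smooth, $\|\zeta(z)\|^2=1+O(\|(z',y_n)\|^2)$, and $\phi_\gamma$ is smooth; the only nontrivial input is the Lipschitz regularity of $u_\gamma$ along $\partial\Omega$, which is precisely what the tangential-normal bound of Proposition \ref{tannor} provides. Combined, this bracket is also controlled by a linear $h$-term plus $O(\|(z',y_n)\|^2)$.

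For the remaining contribution $2\Re(\xi_i(\xi_j)_i u_j)$ I would plug in the explicit formulas (\ref{xixifirst}) and difference. For instance, for $k\le n-1$ the difference takes the form
\[ -2\Re(z_k u_k(z)) + \bigl[u_{x_n}(z)-u_{x_n}(0)\bigr] - 2\Re(z_n u_n(z)), \]
and the other cases are analogous. The first summand is $O(\|(z',y_n)\|)$ by the $C^1$ bound of Proposition \ref{c1}; the second is $O(\|(z',y_n)\|)$ by Proposition \ref{tannor} (Lipschitz control of $u_{x_n}$ along the boundary); for the third, I would use (\ref{xnasymp}) to write $z_n=iy_n+O(\|(z',y_n)\|^2)$ and then split into a linear $y_n$-contribution and a quadratic remainder absorbed by $C_1\|(z',y_n)\|^2$. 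All these pieces fit the template of $h$.

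\textbf{Where the main difficulty sits.} Nothing here requires new ideas, but the one genuinely necessary input beyond smoothness of $\phi$, $\rho$, and the coefficients of the $\xi_s$'s is the boundary Lipschitz regularity of the first normal derivative of $u$, which a priori is only in $L^\infty$. This is exactly what Proposition \ref{tannor} provides. Once this is in place, the proof becomes a verification that each term in the difference $u_{(\xi)(\xi)}(z)-u_{(\xi)(\xi)}(0)$ is a linear combination of (i) linear terms in $(x_j,y_j,y_n)$ with controlled coefficients, (ii) $C\|(z',y_n)\|^2$ remainders, and (iii) $CM\|(z',y_n)\|^4$ remainders coming from the quadratic vanishing of $(\xi,\eta)$; that is precisely the definition (\ref{hex}) of an $h$-function.
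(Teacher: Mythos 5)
The overall decomposition and the absorption of $B_1$, $B_2$, the $\phi$-terms, and the $z_ku_k$, $z_nu_n$ pieces into $h$ are fine, and you correctly identify the key ingredients. However, there is a genuine gap in how you treat the term $u_{x_n}(z)-u_{x_n}(0)$ (and, symmetrically, the piece $-\kappa(0)\|\zeta(0)\|^2\big(u_\gamma(z)-u_\gamma(0)\big)$ arising from (\ref{curvsec})). You claim these are $O(\|(z',y_n)\|)$ by the boundary Lipschitz control and that this ``fits the template of $h$.'' But the template (\ref{hex}) allows only a \emph{genuine linear function} $a\cdot(x_j,y_j,y_n)$ with a controlled \emph{constant} vector $a$, a quadratic remainder, and an $M$-weighted quartic. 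A bare $O(\|(z',y_n)\|)$ bound is strictly weaker: since $u_{x_n}$ along $\partial\Omega$ is only Lipschitz (we have no a priori control on its tangential \emph{second} derivatives), $u_{x_n}(z)-u_{x_n}(0)$ does not admit a first-order Taylor expansion with quadratic remainder, so it is not an $h$-term. This distinction is not cosmetic: in the passage to (\ref{w-boundfinal}) and in Lemma \ref{2.2itw}, the linear part of $h$ must be \emph{subtracted} from $w$ so that the remaining bound is purely of order $\|(z',y_n)\|^2+M\|(z',y_n)\|^4$, which is then matched against the barrier $K(-\psi-\tfrac{1}{M}\|(z',y_n)\|^2-\|(z',y_n)\|^4)$. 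An extra uncancelled $C\|(z',y_n)\|$ term would dominate that barrier near the origin and the maximum principle argument there would break down.

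The paper's proof resolves this precisely by \emph{not} bounding $u_{x_n}(z)-u_{x_n}(0)$ and $u_\gamma(z)-u_\gamma(0)$ separately. Using (\ref{wdiff3}) to replace $u_\gamma(z)$ by $u_{x_n}(z)$ up to an $h$-term, and observing that in the adapted coordinates (\ref{rho}) \emph{all} boundary principal curvatures at the origin equal $1$, so $\kappa(0,\widetilde{\Re\zeta(0)})=1$ and $\|\zeta(0)\|=1$, the problematic first-order differences cancel exactly, leaving $\big(\kappa(0,\cdot)-\kappa(z,\cdot)\big)\big(u_{x_n}(z)-u_{x_n}(0)\big)$ plus honest $h$-terms. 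Here the Lipschitz control on $u_{x_n}$ (your input from Propositions \ref{tantan}--\ref{tannor}) \emph{is} used, but only to upgrade this product of two $O(\|(z',y_n)\|)$ factors to $O(\|(z',y_n)\|^2)$, which is a legitimate $h$-term. So the ingredient you identify is correct, but it is deployed at the wrong place; without the cancellation coming from $\kappa(0)=1$ in adapted coordinates your bookkeeping leaves an unabsorbable $O(\|(z',y_n)\|)$ remainder.
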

\begin{proof}
Recall that $h$ will change from line to line here, but all such $h$'s will satisfy (\ref{hex}). From (\ref{(xi)(xi)}) and (\ref{[xi][xi]}) after applying (\ref{bbest}) we obtain
\begin{equation} \label{wdiff1}
w(z) \leq u_{[\zeta][\zeta]}(z) - u_{[\zeta][\zeta]}(0)+ 2 \Re \left(\xi_i (\xi_j)_i u_j \right)- 2 \Re \left(\xi_i (\xi_j)_i u_j \right)(0)+h(z',y_n).
\end{equation}
Applying (\ref{curvsec}) we may estimate further in (\ref{wdiff1})
\begin{equation} \label{wdiff2} \begin{gathered}
w(z) \leq \phi_{[\zeta][\zeta]}(z) - \phi_{[\zeta][\zeta]}(0)+K \left(z,\widetilde{\Re {\zeta}}\right)(\phi-u)_{\gamma}-K \left(0,\widetilde{\Re {\zeta}(0)}\right)(\phi-u)_{\gamma}(0)
\\+ 2 \Re \left(-z_ku_k+(1-z_n)u_n \right)- u_{x_n}(0)+h(z',y_n) \\\leq 
K \left(0,\widetilde{\Re {\zeta}(0)}\right)u_{x_n}(z)-K \left(z,\widetilde{\Re {\zeta}}\right)u_{\gamma}(z)\\+ u_\gamma(0)-u_{x_n}(0)+h(z',y_n),
\end{gathered} \end{equation}
{where the wave denotes unit length normalization of the vector.}
One observes that:
\begin{equation} \label{wdiff3} \begin{gathered}
u_{x_n}(z)-u_{\gamma}(z)\leq a' \cdot (z',y_n) + C \xn \leq h(z',y_n)
\end{gathered} \end{equation} for $a'$ and $C$ under control.
This is the case as, due to (\ref{rho}),
\begin{equation} \begin{gathered}
\rho_{{x_k}} = a' \cdot (z',y_n) + O( \xn) \text{ for } k<n, \\
\rho_{{y_k}} = a' \cdot (z',y_n) + O( \xn)  \text{ for } k<n.
\end{gathered}
\end{equation}
Thus in order to confirm (\ref{wdiff3}) it is enough to check that
\begin{equation}
u_{x_n}(z)-\gamma^{x_n}(z)u_{x_n}(z) \leq C \xn,
\end{equation}
where $\gamma^{x_n}$ denotes the coefficient in the frame $\partial_{x_i},\partial_{y_i}$.
This in turn follows again using (\ref{rho}) from
\begin{equation}
1-\gamma^{x_n}(z )\leq O(\xn).
\end{equation}
Thus, applying (\ref{wdiff3}) in (\ref{wdiff2}) results in 
\begin{equation} \label{wdiff2} \begin{gathered}
w(z) \leq 
\kappa \left(0,\widetilde{\Re {\zeta}(0)}\right)u_{x_n}(z)-\kappa \left(z,\widetilde{\Re {\zeta}}\right)u_{x_n}(z)+h(z',y_n)\\ \leq 
\left(\kappa \left(0,\widetilde{\Re {\zeta}(0)}\right)-\kappa \left(z,\widetilde{\Re {\zeta}}\right)\right)\left(u_{x_n}(z)-u_{x_n}(0)\right)+h(z',y_n) \leq h(z',y_n),
\end{gathered} \end{equation}
as required.
\end{proof}

In particular, by subtracting from $w$ the linear term in $h$ (this will not change the latter arguments), we can assume that for sufficiently small $r_0$ depending on the geometry of $\partial\Omega$ and on the gradient, tangential-tangential and tangential-normal bounds on $u$ (quantities which we already control) we have
\begin{equation}\label{w-boundfinal}
 w(z)\leq C_3 \big(||(z',y_n)||^2+M||(z',y_n)||^4\big),
\end{equation}
for all $z\in \partial \Omega$ such that $||(z',y_n)||\leq r_0$.

\subsection{Application of the maximum principle}

Assume that $w$ and $\xi$ are as above. In the following, we prove the complex analogue of Lemma 2.2 from \cite{ITW}:
\begin{lemma}\label{2.2itw}
There is a constant $C_4$ dependent on $C_3$ and $r_0$ such that for any $z\in\Omega$ near the origin the following inequality holds
\begin{equation}
w(z)\leq C_4\big(||(z',y_n)||^2+M||(z',y_n)||^4+M\psi(z)\big).
\end{equation} 
\end{lemma}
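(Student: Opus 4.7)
The plan is to apply the maximum principle on the half-ball $\Omega_0 := \Omega \cap \{||z||<r_0\}$, with $r_0$ as in (\ref{w-boundfinal}), using the proposed form of the bound itself as a barrier:
\[ B(z) := C_4\bigl(||(z',y_n)||^2 + M||(z',y_n)||^4 + M\psi(z)\bigr), \]
where $\psi$ is the positive defining function satisfying the standing assumption (\ref{apsi}). The constant $C_4$ will be chosen large enough, depending on $C_3$, $r_0$ and the basic bounds from Section \ref{initial}, to secure both $w\leq B$ on $\partial\Omega_0$ and $\mathcal L[w-B]\geq 0$ in $\Omega_0$, where $\mathcal L$ is the normalized linearized operator introduced in Section \ref{wint}. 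Without loss of generality $M$ may be assumed above a fixed threshold, since otherwise $u_{(\xi)(\xi)}$ is bounded by a pure constant via Propositions \ref{toboun} and \ref{tannor} and the claim is immediate.

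For the interior inequality the key point is that $\mathcal L[w]=\mathcal L[u_{(\xi)(\xi)}]$ since constants disappear under $\mathcal L$; combining with (\ref{seven}) of Proposition \ref{fquadratic} together with the standard lower bound on $\sum_l\mathcal F^{l\bar l}$ in the admissible cone, we obtain $\mathcal L[w]\geq f_{(\xi)(\xi)}/\sum_l\mathcal F^{l\bar l}\geq -C_7$. On the barrier side, the complex Hessian of $||(z',y_n)||^2$ is uniformly bounded, that of $||(z',y_n)||^4$ is $O(||(z',y_n)||^2)=O(r_0^2)$ on $\Omega_0$, while $\mathcal L[\psi]\leq -2$ by (\ref{apsi}). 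Therefore
\[ \mathcal L[B]\leq C_4\bigl(C_5 + C_6 M r_0^2 - 2M\bigr), \]
and shrinking $r_0$ so that $C_6 r_0^2\leq 1/2$ yields $\mathcal L[B]\leq -C_4 M$, which combined with $\mathcal L[w]\geq -C_7$ gives $\mathcal L[w-B]\geq 0$ throughout $\Omega_0$ as soon as $C_4 M \geq C_7$.

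It remains to check boundary values of $w - B$. The piece $\partial\Omega\cap\{||z||\leq r_0\}$ is handled by (\ref{w-boundfinal}) and $\psi|_{\partial\Omega}=0$ provided $C_4\geq C_3$. On the outer spherical cap $\Omega\cap\{||z||=r_0\}$, the a priori bounds of Section \ref{initial} together with (\ref{M}) give $w(z)\leq C_8 M + C_8$ (since $u_{(\xi)(\xi)}$ differs from $u_{[\xi(z)][\xi(z)]}$ by gradient terms and the latter is controlled by $M$ through Proposition \ref{toboun}); a brief case split using the adopted expansion (\ref{rho}) shows that either $||(z',y_n)||\geq r_0/\sqrt{2}$ (so $C_4 M||(z',y_n)||^4\geq C_4 M r_0^4/4$) or $x_n\geq r_0/2$ which, since $\psi = 2x_n - ||z||^2 + O(||z||^3)$, forces $\psi(z)\geq c r_0$ and hence $C_4 M\psi\geq c C_4 M r_0$. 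Enlarging $C_4$ depending on $r_0$ therefore yields $B\geq w$ on the outer cap, and the maximum principle delivers $w\leq B$ on $\Omega_0$. The main subtlety I foresee is the two-sided balancing of $r_0$ and $C_4$: $r_0$ must be small enough that the strictly negative $-2M$ from $\mathcal L[\psi]$ dominates the positive $O(Mr_0^2)$ in the barrier computation, while $C_4$ must simultaneously be large in terms of $r_0$ to dominate the uncontrolled $O(M)$ quantity on the outer boundary. The essential input that makes both compatible is precisely the strict negativity (\ref{apsi}) built into the choice of $\psi$; without it the barrier $M\psi$ could not play its dual role as strong negative source for $\mathcal L$ and large positive quantity away from $\partial\Omega$.
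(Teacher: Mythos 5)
Your proposal is correct and takes essentially the same approach as the paper: the maximum principle with the barrier $C_4\big(||(z',y_n)||^2+M||(z',y_n)||^4+M\psi\big)$, the interior differential inequality via Proposition \ref{fquadratic} and (\ref{apsi}), and the boundary comparison via (\ref{w-boundfinal}). The only deviation is cosmetic: you work on the half-ball $\Omega\cap\{||z||<r_0\}$ and handle the spherical cap by a case split, whereas the paper works on the domain $\omega_r=\{\psi<r^4,\,||(z',y_n)||<r\}$, whose two interior boundary pieces are level sets of $\psi$ and of $||(z',y_n)||$ respectively, so each barrier term dominates automatically on its own piece and no case analysis is needed.
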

\begin{proof}
 The argument copies the reasoning from \cite{ITW}. We provide the details for the sake of completeness. Define for $r\in(0,r_0]$ the domain \[ \omega_r:=\lbrace z\in\Omega \: | \: \psi(z)<r^4,\ ||(z',y_n)||<r \rbrace .\]

Consider the barrier function
\[ v(z)=K[-\psi(z)-\frac{1}M||(z',y_n)||^2-||(z',y_n)||^4]\]
for $K>0$ to be determined.
Recall that due to definition \ref{pseudoconv} $-\psi$ is smooth and strictly plurisubharmonic up to the boundary. Hence, half of the complex Hessian of $-\psi$ dominates the Hessian of $-\frac{1}M||(z',y_n)||^2-||(z',y_n)||^4$ everywhere as long as $M$ is sufficiently large (making the complex Hessian of the first part sufficiently small) and $r$ is sufficiently small (taking care of the smallness of the Hessian of the second part). Thus, exploiting (\ref{apsi}) {and recalling that $\sum_{l=1}^n F^{ll}$ is bounded from below}, we can have for any $K>1$:
 \begin{equation}\label{Lbound}
  \mathcal L(v)\geq 1.
 \end{equation}
 
In particular, if $M$ is large enough (depending on $||f^{1/n}||_{C^2}$) we obtain using (\ref{seven}) in Proposition \ref{fquadratic}:
 \begin{equation}\label{Lbound}
  \mathcal L(Mv+w)\geq0.
 \end{equation}
 
 In order to apply the maximum principle, we need to check that \[Mv+w\leq 0\] on $\partial\omega_r$. 
But \[ \partial\omega_r=\partial_1\omega_r\cup\partial_2\omega_r\cup\partial_3\omega_3,\] 
where $\partial_1\omega_r=\partial\Omega\cap \partial \omega_r$, $\partial_2\omega_r=\lbrace z\in\ \Omega|\ \psi(z)=r^4\rbrace$ and finally \newline $\partial_3\omega_r=\lbrace z\in\ \Omega|\ ||(z',y_n)||=r\rbrace$.

Note that on $\partial_1\omega_r$
\begin{align*}
 &w(z)+Mv(z)\leq C_3(||(z',y_n)||^2+M||(z',y_n)||^4)\\
 &
 +MK(-\psi(z)-\frac{1}M||(z',y_n)||^2-||(z',y_n)||^4),
\end{align*}
where we used (\ref{w-boundfinal}). As $-\psi\leq 0$ it is obvious that \[ w+Mv\leq 0\] on $\partial_1\omega_r$ as long as $K \geq \max \{1,C_3\}$.

Moving on to $\partial_2\omega_r$ note that in this set:
\[ Mv(z)\leq -KMr^4-K||(z',y_n)||^2\] which beats $h(z',y_n)$ as long as $K\geq C_3$. Finally on $\partial_3\omega_r$ once again
$$Mv(z)\leq -KMr^4-Kr^2$$
and hence also on this piece of the boundary the inequality $Mv+w\leq 0$ holds under the previous assumption on $K$.

In conclusion \[ w\leq-Mv(z)\] on the whole $\omega_r$ which is exactly the claimed bound.
\end{proof}

We now turn to proving the bound on the normal derivative of $w$. This is an analogue of Lemma 2.3 from \cite{ITW}.

\begin{lemma} \label{tantann}
For any sufficiently small $r>0$ there exists a constant $C$ depending on $\psi$ and $C_r>0$ depending only on $C_{r^5,r^4}$ from Theorem \ref{weakinteriorbound} such that
\begin{equation}\label{ttn}
(u_{(\xi)(\xi)})_{x_n}(0) \leq r M + C_r.
\end{equation}
\end{lemma}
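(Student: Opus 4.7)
The plan is to adapt \cite[Lemma 2.3]{ITW} by constructing an upper barrier $\tilde v$ for $w(z)=u_{(\xi)(\xi)}(z)-u_{(\xi)(\xi)}(0)$ on the region
\[ \omega_r=\{z\in\Omega \: | \: \psi(z)<r^4,\ \|(z',y_n)\|<r\}, \]
such that $\tilde v(0)=0$, $\tilde v\geq w$ on $\partial\omega_r$, $\mathcal{L}[\tilde v-w]\leq 0$ in $\omega_r$, and $\tilde v_{x_n}(0)\leq rM+C_r$. The maximum principle then yields $w\leq \tilde v$ in $\omega_r$, and since $w(0)=\tilde v(0)=0$, differentiating at $0$ along the inner normal $\partial_{x_n}$ (which points into $\Omega$ in adapted coordinates) gives the estimate.

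I take the barrier of the form
\[ \tilde v(z)= K_1\,\psi(z)+K_2\|(z',y_n)\|^2+K_3 M \|(z',y_n)\|^4 \]
with $K_1:=K(r^5 M + C_{r^5,r^4})/r^4$, fixed constants $K_2,K_3$, and a large parameter $K\geq 1$ to be determined. Since $\partial_{x_n}$ annihilates $\|(z',y_n)\|^2$ and $\|(z',y_n)\|^4$ at the origin while $\psi_{x_n}(0)=2$ in adapted coordinates, one gets $\tilde v_{x_n}(0)=2K_1$; the decisive factor $rM$ appears through the numerator of $K_1$.

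The inequality $\tilde v\geq w$ on $\partial\omega_r$ splits along $\partial_1\omega_r\cup\partial_2\omega_r\cup\partial_3\omega_r$ just as in the proof of Lemma \ref{2.2itw}. On $\partial_1\omega_r=\partial\Omega\cap\partial\omega_r$, the refined boundary bound (\ref{w-boundfinal}) gives $w\leq C_3(\|(z',y_n)\|^2+M\|(z',y_n)\|^4)$, dominated by the last two terms of $\tilde v$ as soon as $K_2,K_3\geq C_3$. On $\partial_3\omega_r=\{\|(z',y_n)\|=r\}\cap\partial\omega_r$, Lemma \ref{2.2itw} gives $w\leq C_4(r^2+2Mr^4)\leq K_2r^2+K_3 Mr^4$ for $K_2\geq C_4$, $K_3\geq 2C_4$. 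The decisive piece is $\partial_2\omega_r=\{\psi=r^4\}\cap\partial\omega_r$: its points lie in $\Omega_{cr^4}$ for some $c>0$, so Theorem \ref{weakinteriorbound} with $\epsilon=r^5$ and $\delta\sim r^4$ yields $|u_{(\xi)(\xi)}|\leq r^5 M+C_{r^5,r^4}$; after absorbing the bounded tangential-tangential derivative $u_{(\xi)(\xi)}(0)$, one obtains $w\leq r^5 M+C_r$, which is in turn majorised by $K_1 r^4 = K(r^5 M+C_{r^5,r^4})$ once $K$ is chosen sufficiently large.

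The operator inequality is the matching constraint. Proposition \ref{fquadratic} and the lower bound on $\sum\mathcal{F}^{ll}$ give $\mathcal{L}[w]\geq -C_0$ for a constant depending only on $\|f\|_{C^{1,1}}$ and $\Omega$. Using (\ref{apsi}) and computing the complex Hessians of $\|(z',y_n)\|^{2k}$, $k=1,2$, one finds $\mathcal{L}[\tilde v]\leq -2K_1+K_2+CK_3 Mr^2$. The $M$-dependent part satisfies $-2K_1+CK_3 Mr^2\leq -2KrM+CK_3 Mr^2\leq -KrM$ for $K\geq CK_3 r/2$; the $M$-independent part $-2KC_{r^5,r^4}/r^4+K_2+C_0\leq 0$ after further enlarging $K$. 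The main obstacle is this delicate balance at $\partial_2\omega_r$: $K_1$ must be small enough to keep $\tilde v_{x_n}(0)=2K_1$ of order $rM+C_r$, yet large enough for $K_1 r^4$ to dominate the interior bound for $w$. This tension is precisely why one invokes Theorem \ref{weakinteriorbound} with $\epsilon=r^5$ rather than, say, $\epsilon=r$, and explains the dependence of $C_r$ on $C_{r^5,r^4}$ announced in the statement. After absorbing the factor $2K$ through the reparametrisation $r\mapsto 2Kr$, the bound $w_{x_n}(0)\leq 2K_1$ becomes $(u_{(\xi)(\xi)})_{x_n}(0)\leq rM+C_r$ as required.
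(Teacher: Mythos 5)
Your proposal is correct and follows essentially the same route as the paper's proof: both construct a barrier built from $\pm\psi$ and $\|(z',y_n)\|^2$ whose coefficient in front of the defining function carries the decisive factor $rM + C_{r^5,r^4}/r^4$, both verify the barrier/linear-operator inequality on $\partial_1,\partial_2,\partial_3\omega_r$ via (\ref{w-boundfinal}), Lemma \ref{2.2itw}, and Theorem \ref{weakinteriorbound} with $\epsilon = r^5$, $\delta = r^4$, and both conclude by differencing quotients in the $x_n$ direction at the origin and reabsorbing the uniform multiplicative constant. The only cosmetic differences are the sign convention (you take $\tilde v\geq w$ rather than $(rM+C_r)v + w\leq 0$ with $v<0$), that you attach the $rM+C_r$ factor only to the $\psi$-coefficient instead of scaling the whole barrier, and that you add a quartic term $K_3M\|(z',y_n)\|^4$; the paper absorbs its effect with the factor $rM$ on the quadratic term instead, so the quartic term is redundant but harmless.
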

\begin{proof}
This time we set
\[v(z)=K \big( -\psi(z) - \beta ||(z',y_n)||^2\big)\]
for some $K,\beta>0$. We aim at verifying 
\begin{equation}\label{linest}
\mathcal{L}\big((rM+C_r)v+w\big) \geq 0,\end{equation}
\begin{equation}\label{boundest}
(rM+C_r)v+w \leq 0 \text{ on }\partial \omega_r 
\end{equation} for $\omega_r$ as before. 

Arguing as in Lemma \ref{2.2itw}, using in particular (\ref{apsi}) and (\ref{seven}) after choosing $\beta$ small enough in comparison to strict plurisubharmonicity of $-\psi$, $K>1$ and $C_r$ big enough in comparison to $||f^{1/n}||_{C^2}$ and $1$ we can arrange (\ref{linest}).

For the rest of the argument we take $C_r$ as the maximum of the value above and $r^{-4}C_{r^5,r^4}$ and continue to use the notation $\partial_i \omega_r$.

On $\partial_1 \omega_r$ utilizing Lemma \ref{2.2itw} we have 
\begin{equation}\label{om1}\begin{gathered} (rM+C_r)v+w \leq -K\beta rM \xn - K \beta C_r \xn \\ + C_4\big(||(z',y_n)||^2+M||(z',y_n)||^4\big).
\end{gathered}\end{equation}
After choosing $K$ big enough (compared to $\beta$ and $C_4$), we see that the first term on RHS of (\ref{om1}) beats the third one, while for $r$ uniformly small enough the second one dominates the fourth one giving (\ref{boundest}) for points on $\partial_1 \omega_r$.

On $\partial_2 \omega_r$ applying Theorem \ref{weakinteriorbound} to $u_{(\xi)(\xi)}(z)$ and the bound on $u_{(\xi)(\xi)}(0)$ we have 
\begin{equation}\label{om2}\begin{gathered} (rM+C_r)v+w \leq -K(rM+C_r)r^4+r^5M+C_{r^5,r^4}\\
\leq -r^5MK-KC_{r^5,r^4}+r^5M+C_{r^5,r^4} \leq 0,\\
\end{gathered}\end{equation}
where we used $K>1$.

On $\partial_3 \omega_r$  we have 
\begin{equation}\label{om3}\begin{gathered} (rM+C_r)v+w \leq -(rM+C_r)\beta K r^2 + 2 C_4\big(r^2+Mr^4\big),
\end{gathered}\end{equation}
which easily is seen to be non positive for $r$ uniformly small enough, and $C_r$ big enough (compared to $C_4$). This finishes the proof of (\ref{boundest}).

The maximum principle guarantees (\ref{boundest}) holding on $\omega_r$. Thin in turn secures
\begin{equation}
\frac{w(0',0,x_n)}{x_n} \leq  K(rM+C_r) \frac{\psi(0',0,x_n)-\psi(0)}{x_n}
\end{equation}
which after taking limit gives
\[(u_{(\xi)(\xi)})_{x_n}(0) \leq (C r M + C_r)\psi_n(0) \leq C r M + C_r.\]
\end{proof}

We now proceed to proving the main result advertised in the Introduction.

\begin{theorem}\label{complexKrylov}
For the solutions $u$ of the Dirichlet problem (\ref{ckns}) from Theorem \ref{krylovest} the estimate
\begin{equation}
\label{mainest}
|\nabla^2u|<C
\end{equation}
holds in $\overline{\Omega}$, where $C$ depends on $\Omega$, $n$, $|\phi|_{C^{3,1}}$ and $|f^{\frac 1 n}|_{C^{1,1}}$.
\end{theorem}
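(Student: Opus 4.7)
The claim reduces, via the estimates gathered in Section \ref{initial} (Propositions \ref{unif}--\ref{tannor}) together with the weakly interior Hessian bound Theorem \ref{weakinteriorbound}, to bounding the boundary quantity $M := \sup_{\partial\Omega} u_{\gamma\gamma}$ by a universal constant. Assume $M = u_{\gamma\gamma}(p_0)$ for some $p_0 \in \partial \Omega$, and work in adapted coordinates centred at $p_0$ (Proposition \ref{bedtay}), so $p_0 = 0$ and $\gamma(p_0) = \partial_{x_n}$. In this chart Proposition \ref{approx} equips $\Omega$ with $2n-1$ affine skew-hermitian approximate tangential vector fields $\xi_1, \ldots, \xi_{2n-1}$ whose real parts at the origin span $T_{p_0}^{\mathbb{R}} \partial \Omega = \langle \partial_{x_1}, \partial_{y_1}, \ldots, \partial_{x_{n-1}}, \partial_{y_{n-1}}, \partial_{y_n}\rangle$.

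For each such $\xi$ and any $r \in (0,1)$, Lemma \ref{tantann} yields $(u_{(\xi)(\xi)})_{x_n}(0) \leq rM + C_r$; in fact, the barrier in its proof furnishes the pointwise inequality
\[
u_{(\xi)(\xi)}(z) - u_{(\xi)(\xi)}(0) \leq K (rM + C_r) \bigl( \psi(z) + \beta|(z',y_n)|^2 \bigr)
\]
throughout $\omega_r$. Evaluating at $p := (0', x_n)$ with $x_n := 1/M$ and recalling that $u_{(\xi_k)(\xi_k)}(0), u_{(\xi_{n-1+k})(\xi_{n-1+k})}(0), u_{(\xi_{2n-1})(\xi_{2n-1})}(0)$ coincide at the origin (up to bounded first-order corrections) with the tangential second derivatives $u_{x_k x_k}(0), u_{y_k y_k}(0), u_{y_n y_n}(0)$ (all universally bounded by Proposition \ref{tantan}), we infer
\[ u_{x_k x_k}(p),\ u_{y_k y_k}(p),\ u_{y_n y_n}(p) \leq C(1+r), \qquad k = 1, \ldots, n-1. \]
Equivalently, every tangential diagonal entry of the complex Hessian $(u_{i\bar j})(p)$ is universally bounded; by positive semi-definiteness of $(u_{i\bar j})(p)$ together with Proposition \ref{tannor}, the off-diagonal tangential and mixed tangential-normal entries are likewise controlled.

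Combining these bounds with the Monge-Amp\`ere equation $\det(u_{i\bar j})(p) = g(p)$, the Schur complement formula relative to the tangential block $U'(p) := (u_{i\bar j}(p))_{i,j < n}$ gives
\[ g(p) = \det U'(p) \cdot \bigl( u_{n\bar n}(p) - v^* U'(p)^{-1} v \bigr), \qquad v := (u_{i\bar n}(p))_{i < n}, \]
from which, after invoking the concavity of $\det^{1/n}$ (cf. Proposition \ref{fquadratic}) to tame the potentially degenerate regime $\det U'(p) \to 0$, one extracts $u_{n\bar n}(p) \leq rM + C'_r$. Transferring this bound to $p_0$ via continuity of the complex Hessian on the $C^{2,\alpha}$ approximating solutions (justified in Subsection 3.1) gives
\[ M = 4 u_{n\bar n}(p_0) - u_{y_n y_n}(p_0) \leq 4rM + C''_r, \]
and the choice $r := 1/8$ concludes $M \leq C$.

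The principal obstacle is the concluding extraction of the bound $u_{n\bar n}(p) \leq rM + C_r$ from the Monge-Amp\`ere equation precisely in the regime where $\det U'(p)$ is small: this is the scenario that resists elementary manipulation and for which the Krylov--Ivochkina philosophy is designed. The concavity of $\det^{1/n}$ combined with the holomorphic symmetry of Proposition \ref{holo} encoded in the skew-hermitian structure of the vector fields $\xi_j$ allows the degeneracy to be absorbed into the small factor $r$, which is also the reason the argument does not transparently generalize beyond the complex Monge-Amp\`ere operator.
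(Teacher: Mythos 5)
Your overall reduction (bound $M := \sup_{\partial\Omega} u_{\gamma\gamma}$, fix adapted coordinates at a maximizing point $p_0$, invoke Proposition~\ref{approx} and Lemma~\ref{tantann}) matches the paper's framework, but beyond that your route diverges from the paper's and has genuine gaps where the real difficulty lies.

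\textbf{Where the arguments part ways.} The paper never attempts to bound $u_{n\bar n}$ at an interior point and never invokes a Schur complement. Instead, it converts Lemma~\ref{tantann} into a one--sided, quadratic upper Taylor bound for the \emph{first} normal derivative $u_{x_n}$ along $\partial\Omega$ near $p_0$, namely $u_{x_n}(z)\leq u_{x_n}(0)+a\cdot(z',y_n)+(rM+C_r)\|(z',y_n)\|^2$; this requires the careful chain of identities (\ref{ngamma})--(\ref{taylorcoefbound}) translating the skew-hermitian tangential-tangential bound into control on $u_{\gamma(\zeta_i)(\zeta_i)}$. It then propagates this boundary inequality into $\Omega\cap B_r$ via the barrier $v=K(-\psi-\beta\|(z',y_n)\|^2-\tfrac{a}{K(rM+C_r)}\cdot(z',y_n))$ and the concavity inequality (\ref{seven}), and finally differentiates in the normal direction at $p_0$ to obtain $M\leq C(rM+C_r)$. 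No interior evaluation of the Hessian and no use of the equation at an interior point ever appears.

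\textbf{The gaps in your approach.} First, you claim that at the interior point $p=(0',1/M)$ the mixed tangential-normal entries $v=(u_{i\bar n}(p))_{i<n}$ are controlled, citing Proposition~\ref{tannor}. That proposition is a boundary estimate; at the interior point $p$ the only general bound available from positive semi-definiteness of $(u_{i\bar j})(p)$ is $|u_{i\bar n}(p)|\leq\sqrt{u_{i\bar i}(p)\,u_{n\bar n}(p)}=O(\sqrt{M})$, which is not enough. Consequently the Schur complement $g(p)=\det U'(p)\cdot(u_{n\bar n}(p)-v^*U'(p)^{-1}v)$ does not yield $u_{n\bar n}(p)\leq rM+C_r$: the term $v^*U'(p)^{-1}v$ can itself be of order $M$ even when $\det U'(p)$ is bounded below. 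Second, you acknowledge that the degenerate regime $\det U'(p)\to 0$ ``resists elementary manipulation'' and that ``concavity of $\det^{1/n}$\ldots allows the degeneracy to be absorbed,'' but this is asserted, not shown; it is precisely the content of the theorem, so the proof has a hole at its load-bearing step. Third, the final ``transfer via continuity of the complex Hessian'' from $p$ back to $p_0$ (at distance $\sim 1/M$) would require a $C^{2,\alpha}$ modulus uniform in $M$, which is not available in the degenerate setting. For these reasons the argument does not close, and one needs the boundary-Taylor-plus-maximum-principle mechanism of the paper (or something of comparable strength) in its place.
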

\begin{proof}
Suppose that $M$ is attained at $p\in \partial \Omega$. We choose adopted coordinates $z$ around this point in which $p=0$. We claim that for $u_{x_n}$ along the boundary $\partial \Omega$ around $0$ the inequality
\begin{equation} \label{ntaylor} u_{x_n}(z) \leq u_{x_n}(0)+a \cdot (z',y_n)+(rM+C_r)\xn \end{equation}
holds, where in the above $a$ and $C_r$ are bounded in terms of $(u_{x_n(\zeta_i)}(0))_{1 \leq i \leq 2n-1}$, $\gamma$ and previous $C_r$. Moreover $\zeta_i$ are the tangential projections of $\xi_i$ from Proposition \ref{approx} and $\gamma$ the interior normal. Thus the constants are bounded as a vector of mixed tangential-normal derivatives and $\gamma$ are under control.

Note that in order to obtain (\ref{ntaylor}), from Taylor expansion, we need an estimate on 
\[u_{x_n(\zeta_i)(\zeta_i)}(w)\]
along the boundary in the neighborhood $0$. Let us elaborate for clarity and completeness how to obtain that bound from Lemma \ref{tantann}.

First of all note that in the adopted coordinates $z$ for $p$
\begin{equation} \label{ngamma} u_{x_n}(z)\leq u_\gamma(z)+a\cdot(z',y_n)+C\xn\end{equation}
locally along $\partial \Omega$ for $a$ and $C$ under control, cf. (\ref{wdiff3}).

Writing out the Taylor expansion of $u_\gamma$ around $p$ on $\partial \Omega$ gives
\begin{equation} \label{gammataylor} u_\gamma(z) \leq u_\gamma(0)+a \cdot (z',y_n)+(rM+C_r)\xn \end{equation}
for $a=(u_{\gamma(\zeta_i)}(0))_{1 \leq i \leq 2n-1}$ provided
\begin{equation} u_{\gamma(\zeta_i)(\zeta_i)}(w) \leq rM+C_r \end{equation} for $w\in \partial \Omega$ around $p$.

Let us take the vector fields $\tilde{\xi}_i$ generated by Proposition \ref{approx} so that
\begin{equation} \label{taninf} \tilde{\xi}_i(w)=\zeta_i(w) \end{equation} in adopted coordinates $\tilde{z}$ for $w\in \partial \Omega$. Thanks to property (2) of Definition \ref{tang} for approximate tangential $\tilde{\xi}_i$'s we have
\begin{equation} \label{infin} u_{\tilde{x}_n(\tilde{\xi}_i)(\tilde{\xi}_i)}(w)=u_{\tilde{x}_n(\tilde{\zeta}_i)(\tilde{\zeta}_i)}(w),\end{equation}
where $\tilde{\zeta}_i$ are tangential projections of $\tilde{\xi}_i$'s.
Note that:
\begin{equation} \label{shuf} \begin{gathered}
u_{\tilde{x}_n(\tilde{\xi}_l)(\tilde{\xi}_l)}=u_{(\tilde{\xi}_l)(\tilde{\xi}_l)\tilde{x}_n}\\
-2 \Re \Big( \big((\tilde{\xi}_l)_i ((\tilde{\xi}_l)_j)_{\tilde{i}}\big)_{\tilde{x}_n} u_{\tilde{j}}\Big)\\
-2 \Re \Big( \big((\tilde{\xi}_l)_i (\tilde{\xi}_l)_j\big)_{\tilde{x}_n} u_{\tilde{i}\tilde{j}}
+\big((\tilde{\xi}_l)_i \overline{(\tilde{\xi}_l)_j}\big)_{\tilde{x}_n} u_{\tilde{i}\overline{\tilde{j}}}\Big). \end{gathered}
\end{equation}
Thus applying (\ref{shuf}) in (\ref{infin}) from Lemma \ref{tantann} and the fact that all the derivatives of $u$ apart from normal-normal ones are under control we obtain:
\begin{equation} \label{taylorcoefbound} u_{\tilde{x}_n(\tilde{\zeta}_i)(\tilde{\zeta}_i)}(w) \leq rM+C_r.\end{equation}
Another easy calculation demonstrates that
\begin{equation} \label{theyareclose} |u_{\tilde{x}_n(\tilde{\zeta}_i)(\tilde{\zeta}_i)}(w)-u_{\tilde{x}_n({\zeta_i})({\zeta_i})}(w)|<C.\end{equation}
The above follows from the expansion:
\begin{equation} \label{expanss}
u_{\tilde{x}_n({\zeta_l})({\zeta_l})}=u_{\tilde{x}_n[{\zeta_l}][{\zeta_l}]}
+ 2 \Re \Big( (\zeta_l)_i \big((\zeta_l)_j\big)_{\tilde{i}} u_{\tilde{x}_n\tilde{j}}+\overline{(\zeta_l)_{i}}\big((\zeta_l)_j\big)_{\bar{\tilde{i}}}u_{\tilde{x}_n\tilde{j}} \Big).
\end{equation}
Indeed, it shows that we only need to see why the terms in front of double normal derivatives in second summand of (\ref{expanss}) are the same for $\zeta_l$ and $\tilde{\zeta_l}$ at $w$. Since those are tangential vector fields we get
\begin{equation}\label{normaltangentialproduct}
(\zeta_l,\eta)=(\Re(\zeta_l),\gamma)=0=(\Re(\tilde{\zeta}_l),\gamma)=(\tilde{\zeta}_l,\eta).
\end{equation}
Using that at $w$
\begin{equation}
\label{normalatw}\gamma(w)=\partial_{\tilde{x}_n}
\end{equation}
and differentiating (\ref{normaltangentialproduct}) in any real $\tilde{a}$ direction (in $\tilde{z}$ coordinates) provides
\begin{equation}\label{byparts}
\Re \Big( \big((\zeta_l)_n\big)_{\tilde{a}}\Big)=-\Re \Big( (\zeta_l)_n\big(\overline{\eta_n}\big)_{\tilde{a}}\Big).
\end{equation} 
Note the equality
\begin{equation}\label{xnxnterm}
2 \Re \Big( (\zeta_l)_i \big((\zeta_l)_n\big)_{\tilde{i}}+\overline{(\zeta_l)_{i}}\big((\zeta_l)_n\big)_{\bar{\tilde{i}}} \Big) = 4\Re \Big( (\zeta_l)_i \big(\Re \left((\zeta_l)_n\right)\big)_{\tilde{i}} \Big)
\end{equation} 
and the fact that this is the quantity in (\ref{expanss}) standing in front of $u_{\tilde{x}_n \tilde{x}_n}$. Thus we see from (\ref{byparts}) that the quantity in (\ref{xnxnterm}) is identical for both vector fields due to (\ref{taninf}). Applying (\ref{theyareclose}) in (\ref{taylorcoefbound}) results in 
\[u_{\tilde{x}_n({\zeta_i})({\zeta_i})}(w) \leq rM+C_r.\]

Finally,
\begin{equation} \label{ngamma} u_{\gamma({\zeta_i})({\zeta_i})}(w) \leq C + u_{\tilde{x}_n({\zeta_i})({\zeta_i})}(w),\end{equation}
as another easy calculation exploiting (\ref{normalatw}) (and the bound on mixed derivatives) shows. This provides (\ref{gammataylor}). Merging (\ref{gammataylor}) with (\ref{ngamma}) proves (\ref{ntaylor}).

For the end of the proof consider the function
\[v(z)=K \big( -\psi(z) - \beta ||(z',y_n)||^2 - \frac{a}{K \cdot (rM+C_r)} \cdot (z',y_n) \big)\]
for some $K,\beta>0$.

Using (\ref{apsi}) and (\ref{seven}) as before, after choosing $\beta$ small enough in comparison to strict plurisubharmonicity of $-\psi$, $K$ big enough in comparison to $||f^{1/n}||_{C^2}$ we can secure
\begin{equation}\label{linfin}
\mathcal L \big( (rM+C_r)v + u_{x_n} - u_{x_n}(0)\big) \geq 0
\end{equation}
in $\Omega \cap B_r$ for $r<<1$.

On the other hand thanks to (\ref{ntaylor}) clearly 
\begin{equation}\label{fin}
u_{x_n} - u_{x_n}(0) \leq - (rM+C_r)v
\end{equation}
in $\partial \Omega\cap B_r$ while this holds on $\Omega \cap \partial B_r$ for $r<<1$ thanks to choosing $K\beta >>1$.

Using (\ref{linfin}) and (\ref{fin}) we end up with (\ref{fin}) holding in $\Omega\cap B_r$. Thus 
\begin{equation}\label{doubnormal}
M=u_{x_n x_n}(0) \leq (rM+C_r)v_{x_n}(0)=C(rM+C_r).
\end{equation} 
As we can choose $r<<1$ as small as we wish (\ref{doubnormal}) results in the desired bound on $M$.
\end{proof}

\section{An obstruction for general complex Hessian equations}
In this section we shall discuss the obstruction that occurs when a general complex Hessian equation replaces the complex Monge-Amp\`ere one.

As the discussion after (\ref{unitary}) points out for general $F$ the only symmetries one can use are the complex (i.e. unitary) affine transformations. In particular one cannot utilize the adapted coordinates from Section 5.

Nevertheless, let the boundary of $\Omega$ be given by a defining function $\rho$. After picking a boundary point and identifying it with the coordinate origin we have
\[ \rho(z)=2\Re(\rho_{z_j}(0)z_j)+\Re(\rho_{z_kz_l}(0)z_kz_l)+\rho_{z_k\bar{z}_l}(0)z_k\bar{z}_l+O(||z||^3).\]
In order to fix a normal direction one can apply a unitary rotation, if necessary, and assume that $\rho_{z_j}(0)=0,\ j=1,\cdots,n-1$ and $\rho_{z_n}(0)=-1$. Additionally, rotating further if necessary, we can assume that
\begin{equation}\label{condss}
 \rho_{z_kz_k}(0)\in\mathbb R,\ \ k=1,\cdots,n-1.
\end{equation}

Fix now $s\in\{1,\cdots,n-1\}$ and consider the holomorphic affine skew-hermitian vector field
$$\xi=\partial_{z_s}+a_{kl}z_l\partial_{z_k},$$
for some skew-hermitian matrix $(a_{kl})$. We would like to check to what extent the conditions in Definition \ref{tang} could be fulfilled. The condition
$$(\xi,\eta)=\Re\left(\xi_k(z)\rho_{z_k}(z)\right)=O(||(z',y_n)^2||)$$
(which boils down to canceling all linear terms, except for $\Re(z_n)$) is then easily transformed into the linear system.
\begin{equation}\label{obstr1}
\begin{cases}
 a_{nl}=\rho_{z_sz_l}(0)+\rho_{\bar{z}_sz_l}(0),\ &l=1,\cdots n-1;\\
 a_{nn}=i\Im(\rho_{\bar{z}_sz_n}(0)+\rho_{z_sz_n}(0)).
\end{cases}
\end{equation}
Define then $a_{ln}:=-\overline{a}_{ln}$ and $a_{pq}=0$ for $1\leq p,q\leq n-1$.

It remains to check that such a $\xi$ satisfies $||\xi||^2=1+O(||(z',y_n)^2||)$.
But
$$||\xi||^2=\big|1-\big(\rho_{\bar{z}_s\bar{z}_s}(0)+\rho_{{z}_s\bar{z}_s}(0)\big)z_n\big|^2+O(||(z',y_n)^2||)$$
$$=1-2\Re\Big(\big(\rho_{\bar{z}_s\bar{z}_s}(0)+\rho_{{z}_s\bar{z}_s}(0)\big)z_n\Big)+O(||(z',y_n)^2||)$$
$$=1-2\big(\rho_{\bar{z}_s\bar{z}_s}(0)+\rho_{{z}_s\bar{z}_s}(0)\big)\Re(z_n)+O(||(z',y_n)^2||)=1+O(||(z',y_n)^2||),$$
where we used (\ref{condss}). Thus, $\xi$ is an almost tangential skew-hermitian holomorphic vector field.

For $\xi=i\partial_{z_s}+a_{kl}z_l\partial_{z_k}$ (and still $s\in1,\cdots,n-1$) the same argument leads to the (unique) matrix
$$\begin{cases}
   a_{pq}=0\ & {\rm if}\ 1\leq p,q\leq n-1;\\
   a_{nl}=i\rho_{z_sz_l}(0)-i\rho_{\bar{z}_sz_l}(0)\ &{\rm if}\ 1\leq l\leq n-1;\\
   a_{ln}=-\overline{a}_{nl}\\
   a_{nn}=i\Re\big(\rho_{z_sz_n}(0)-\rho_{\bar{z}_sz_n}(0)\big).
  \end{cases}
$$
Once again $||\xi||^2=1+O(||(z',y_n)^2||)$ thanks to (\ref{condss}).

For the remaining direction $i\partial_{z_n}$ and the corresponding vector field $\xi=i\partial_{z_s}+a_{kl}z_l\partial_{z_k}$ the unique choice for the matrix $a$ is

$$\begin{cases}
   a_{pq}=0\ & {\rm if}\ 1\leq p,q\leq n-1;\\
   a_{nl}=i\rho_{z_nz_l}(0)-i\rho_{\bar{z}_nz_l}(0)\ &{\rm if}\ 1\leq l\leq n-1;\\
   a_{ln}=-\overline{a}_{nl}\\
   a_{nn}=i\big(\Re\rho_{z_nz_n}(0)-\rho_{\bar{z}_nz_n}(0)\big).
  \end{cases}
$$
With such a choice
$$||\xi||^2$$
$$=\Big|i+i\sum_{l=1}^{n-1}\big(\rho_{z_nz_l}(0)-\rho_{\bar{z}_nz_l}(0)\big)z_l+i\big(\Re \big(\rho_{z_nz_n}(0)\big)-\rho_{\bar{z}_nz_n}(0)\big)z_n\Big|^2+O(||(z',y_n)^2||)$$
$$=1+2\Re\sum_{l=1}^{n-1}\big(\rho_{z_nz_l}(0)-\rho_{\bar{z}_nz_l}(0)\big)z_l+O(||(z',y_n)^2||).$$

But the latter quantity is $O(||(z',y_n)^2||)$ iff 
\[\rho_{z_nz_l}(0)-\rho_{\bar{z}_nz_l}(0)=-i\rho_{y_nz_l}(0)\]
vanishes for each $l=1,\cdots,n-1$.

Thus, in general, there is no almost tangential skew-hermitian holomorphic vector field whose real part extends $\partial_{y_n}$.

As a final remark, let us mention that for $\Omega$ being a ball in $\mathbb C^n$ the obstructions $\rho_{y_nz_l}(0)$ do vanish. In fact, then the constructed vector fields are just the standard infinitesimal rotations which are tangential to the boundary. Hence, we finish with the following corollary:
\begin{corollary}
 Let $F$ be a complex Hessian operator satisfying the conditions (\ref{below}). If $B\subset\mathbb C^n$ is a ball, then the Dirichlet problem
 $$\begin{cases}
    F(D^2_{\mathbb C}u)={f},\\
    u|_{\partial B}=\varphi
   \end{cases}
$$
admits a unique $\Gamma$-admissible solution of class $C^{1,1}(\overline{B})$ provided $\varphi\in C^{3,1}(\partial B)$ and {$f\in C^{1,1}(B)$}, {$f \geq 0$}.
\end{corollary}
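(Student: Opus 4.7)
The plan is to run the proof of Theorem \ref{complexKrylov} verbatim, with the complex Monge--Amp\`ere operator replaced by the general Hessian operator $F$ satisfying (\ref{below}). First I would verify that the preparatory machinery is operator-independent: the approximation procedure of Section \ref{initial}, the basic a priori bounds on $u$, $|Du|$, and the double-tangential and tangential-normal derivatives (Propositions \ref{unif}--\ref{tannor}), together with the weakly interior second-order estimate (Theorem \ref{weakinteriorbound}), all apply unchanged to any admissible pair $(F,\Gamma)$. Crucially, Proposition \ref{fquadratic} also transfers verbatim, since its proof uses only the unitary invariance (\ref{unitary}) and concavity of $F$. Hence the only step in the proof of Theorem \ref{complexKrylov} that genuinely invokes the determinant structure is the use of Proposition \ref{bedtay} to reach adapted coordinates and, through them, the affine skew-hermitian almost tangential vector fields of Proposition \ref{approx}.

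The next step is to replace Proposition \ref{approx} by an explicit construction tailored to the ball. Fix any $p\in\partial B$; after a unitary rotation and a translation we may place $p$ at the origin and arrange the defining function to be $\rho(z)=c_1\Re(z_n)+c_2||z||^2$ for positive constants $c_1,c_2$. In particular, all pure second-order derivatives $\rho_{z_kz_l}(0)$ vanish, so the obstruction terms $\rho_{y_nz_l}(0)$ identified at the end of Section 6 vanish trivially. The linear systems determining the skew-hermitian Jacobians there are then all uniquely and unobstructedly solvable, and the required $2n-1$ affine skew-hermitian almost tangential fields can be written down directly --- indeed the infinitesimal unitary rotations of $\mathbb{C}^n$ are globally tangent to $\partial B$ and, after normalization, satisfy every property demanded of the fields in Proposition \ref{approx}.

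With this drop-in replacement in hand, the argument of Subsections 5.3 and 5.4 runs without alteration: the boundary expansion of Lemma \ref{w-bound}, the barrier argument of Lemma \ref{2.2itw} propagating the bound into the interior, the normal-derivative estimate of Lemma \ref{tantann}, and the concluding comparison at the point realizing $M$ all rely only on the features already established for general $F$. This yields the global $C^{1,1}$ a priori bound (\ref{mainest}) on the solution $u$. Existence of a $\Gamma$-admissible $C^{1,1}(\overline B)$ solution then follows through the continuity method as in \cite{Li}, combined with the standard approximation of $f$ by $f+\varepsilon>0$ which supplies the $C^{2,\alpha}$ approximating solutions on which the estimate is placed.

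The main obstacle I would expect is purely technical bookkeeping: one must verify that the specific scaling constants $c_1, c_2$ appearing in the ball's defining function do not alter the qualitative form of the estimates in Lemmas \ref{w-bound}--\ref{tantann}, which in the Monge--Amp\`ere case were derived in the normalized adapted-coordinate convention. Since all the relevant bounds are homogeneous in the choice of defining function, this should amount to tracking multiplicative constants, with no genuinely new analytic ingredient required. One should also double-check that the explicit rotational vector fields meet the asymptotic form $||\xi||=1+O(||(z',y_n)||^2)$ required by Definition \ref{tang}, which follows from the fact that the infinitesimal rotations preserve the Euclidean norm exactly.
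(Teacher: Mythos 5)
Your proposal matches the paper's reasoning. The paper's proof of this corollary is exactly what you describe: in Section 6 it shows that for a general Hessian operator (where no nonlinear holomorphic change of variables is available) the construction of the $2n-1$ skew-hermitian almost tangential vector fields works in all directions except $i\partial_{z_n}$, where the obstruction is that $\rho_{y_n z_l}(0)$, $l=1,\dots,n-1$, must vanish; it then observes that for the ball these quantities are zero and the resulting fields are the infinitesimal rotations, so the whole argument of Theorem \ref{complexKrylov} carries through.

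One small imprecision worth noting: you write that the obstruction vanishes "trivially" because all pure second derivatives $\rho_{z_kz_l}(0)$ vanish for the ball, but the obstruction $\rho_{z_n z_l}(0)-\rho_{\bar z_n z_l}(0)$ also involves the \emph{mixed} derivative $\rho_{\bar z_n z_l}(0)$. For the ball this too vanishes for $l<n$ (the complex Hessian of $\|z\|^2$ is the identity, hence diagonal), so the conclusion holds, but the justification needs to cover both terms.
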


	\bibliographystyle{amsplain}
	
\end{document}